\newcommand {\Omit}[1]{}
\tikzset{
%Define standard arrow tip
>=stealth',
%Define style for different line styles
help lines/.style={dashed, thick},
axis/.style={<->},
important line/.style={thick},
connection/.style={thick, dotted},
}
\newlength{\hatchspread}
\newlength{\hatchthickness}
\tikzset{hatchspread/.code={\setlength{\hatchspread}{#1}},
         hatchthickness/.code={\setlength{\hatchthickness}{#1}}}
\tikzset{hatchspread=3pt,
         hatchthickness=0.4pt}
\pgfqpoint{\hatchspread}{\hatchspread}}% tile size
\newcommand{\nc}{\newcommand}
\nc{\rnc}{\renewcommand}
\nc{\bb}[1]{{\mathbb #1}}
\nc{\bbA}{\bb{A}}\nc{\bbB}{\bb{B}}\nc{\bbC}{\bb{C}}\nc{\bbD}{\bb{D}}
\nc{\bbE}{\bb{E}}\nc{\bbF}{\bb{F}}\nc{\bbG}{\bb{G}}\nc{\bbH}{\bb{H}}
\nc{\bbI}{\bb{I}}\nc{\bbJ}{\bb{J}}\nc{\bbK}{\bb{K}}\nc{\bbL}{\bb{L}}
\nc{\bbM}{\bb{M}}\nc{\bbN}{\bb{N}}\nc{\bbO}{\bb{O}}\nc{\bbP}{\bb{P}}
\nc{\bbQ}{\bb{Q}}\nc{\bbR}{\bb{R}}\nc{\bbS}{\bb{S}}\nc{\bbT}{\bb{T}}
\nc{\bbU}{\bb{U}}\nc{\bbV}{\bb{V}}\nc{\bbW}{\bb{W}}\nc{\bbX}{\bb{X}}
\nc{\bbY}{\bb{Y}}\nc{\bbZ}{\bb{Z}}
\nc{\mbf}[1]{{\mathbf #1}}
\nc{\bfA}{\mbf{A}}\nc{\bfB}{\mbf{B}}\nc{\bfC}{\mbf{C}}\nc{\bfD}{\mbf{D}}
\nc{\bfE}{\mbf{E}}\nc{\bfF}{\mbf{F}}\nc{\bfG}{\mbf{G}}\nc{\bfH}{\mbf{H}}
\nc{\bfI}{\mbf{I}}\nc{\bfJ}{\mbf{J}}\nc{\bfK}{\mbf{K}}\nc{\bfL}{\mbf{L}}
\nc{\bfM}{\mbf{M}}\nc{\bfN}{\mbf{N}}\nc{\bfO}{\mbf{O}}\nc{\bfP}{\mbf{P}}
\nc{\bfQ}{\mbf{Q}}\nc{\bfR}{\mbf{R}}\nc{\bfS}{\mbf{S}}\nc{\bfT}{\mbf{T}}
\nc{\bfU}{\mbf{U}}\nc{\bfV}{\mbf{V}}\nc{\bfW}{\mbf{W}}\nc{\bfX}{\mbf{X}}
\nc{\bfY}{\mbf{Y}}\nc{\bfZ}{\mbf{Z}}
\nc{\bfa}{\mbf{a}}\nc{\bfb}{\mbf{b}}\nc{\bfc}{\mbf{c}}\nc{\bfd}{\mbf{d}}
\nc{\bfe}{\mbf{e}}\nc{\bff}{\mbf{f}}\nc{\bfg}{\mbf{g}}\nc{\bfh}{\mbf{h}}
\nc{\bfi}{\mbf{i}}\nc{\bfj}{\mbf{j}}\nc{\bfk}{\mbf{k}}\nc{\bfl}{\mbf{l}}
\nc{\bfm}{\mbf{m}}\nc{\bfn}{\mbf{n}}\nc{\bfo}{\mbf{o}}\nc{\bfp}{\mbf{p}}
\nc{\bfq}{\mbf{q}}\nc{\bfr}{\mbf{r}}\nc{\bfs}{\mbf{s}}\nc{\bft}{\mbf{t}}
\nc{\bfu}{\mbf{u}}\nc{\bfv}{\mbf{v}}\nc{\bfw}{\mbf{w}}\nc{\bfx}{\mbf{x}}
\nc{\bfy}{\mbf{y}}\nc{\bfz}{\mbf{z}}
\newcommand{\op}{\text{op}}
\nc{\mcal}[1]{{\mathcal #1}}
\nc{\calA}{\mcal{A}}\nc{\calB}{\mcal{B}}\nc{\calC}{\mcal{C}}\nc{\calD}{\mcal{D}}
\nc{\calE}{\mcal{E}} \nc{\calF}{\mcal{F}}\nc{\calG}{\mcal{G}}\nc{\calH}{\mcal{H}}
\nc{\calI}{\mcal{I}}\nc{\calJ}{\mcal{J}}\nc{\calK}{\mcal{K}}\nc{\calL}{\mcal{L}}
\nc{\calM}{\mcal{M}}\nc{\calN}{\mcal{N}}\nc{\calO}{\mcal{O}}\nc{\calP}{\mcal{P}}
\nc{\calQ}{\mcal{Q}}\nc{\calR}{\mcal{R}}\nc{\calS}{\mcal{S}}\nc{\calT}{\mcal{T}}
\nc{\calU}{\mcal{U}}\nc{\calV}{\mcal{V}}\nc{\calW}{\mcal{W}}\nc{\calX}{\mcal{X}}
\nc{\calY}{\mcal{Y}}\nc{\calZ}{\mcal{Z}}
\nc{\fA}{\frak{A}}\nc{\fB}{\frak{B}}\nc{\fC}{\frak{C}} \nc{\fD}{\frak{D}}
\nc{\fE}{\frak{E}}\nc{\fF}{\frak{F}}\nc{\fG}{\frak{G}}\nc{\fH}{\frak{H}}
\nc{\fI}{\frak{I}}\nc{\fJ}{\frak{J}}\nc{\fK}{\frak{K}}\nc{\fL}{\frak{L}}
\nc{\fM}{\frak{M}}\nc{\fN}{\frak{N}}\nc{\fO}{\frak{O}}\nc{\fP}{\frak{P}}
\nc{\fQ}{\frak{Q}}\nc{\fR}{\frak{R}}\nc{\fS}{\frak{S}}\nc{\fT}{\frak{T}}
\nc{\fU}{\frak{U}}\nc{\fV}{\frak{V}}\nc{\fW}{\frak{W}}\nc{\fX}{\frak{X}}
\nc{\fY}{\frak{Y}}\nc{\fZ}{\frak{Z}}
\nc{\fa}{\frak{a}}\nc{\fb}{\frak{b}}\nc{\fc}{\frak{c}} \nc{\fd}{\frak{d}}
\nc{\fe}{\frak{e}}\nc{\fFf}{\frak{f}}\nc{\fg}{\frak{g}}\nc{\fh}{\frak{h}}
\nc{\fri}{\frak{i}}\nc{\fj}{\frak{j}}\nc{\fk}{\frak{k}}\nc{\fl}{\frak{l}}
\nc{\fm}{\frak{m}}\nc{\fn}{\frak{n}}\nc{\fo}{\frak{o}}\nc{\fp}{\frak{p}}
\nc{\fq}{\frak{q}}\nc{\fr}{\frak{r}}\nc{\fs}{\frak{s}}\nc{\ft}{\frak{t}}
\nc{\fu}{\frak{u}}\nc{\fv}{\frak{v}}\nc{\fw}{\frak{w}}\nc{\fx}{\frak{x}}
\nc{\fy}{\frak{y}}\nc{\fz}{\frak{z}}
\newtheorem{theorem}{Theorem}[section]
\newtheorem{lemma}[theorem]{Lemma}
\newtheorem{corollary}[theorem]{Corollary}
\newtheorem{prop}[theorem]{Proposition}
\newtheorem{assumption}[theorem]{Assumption}
\theoremstyle{definition}
\newtheorem{definition}[theorem]{Definition}
\newtheorem{example}[theorem]{Example}
\newtheorem{remark}[theorem]{Remark}
\newtheorem{thm}{Theorem}
 \DeclareMathOperator{\id}{id}
\DeclareMathOperator{\Image}{Im}
\DeclareMathOperator{\modu}{mod} 
 \DeclareMathOperator{\GL}{GL}
\DeclareMathOperator{\Hom}{{Hom}}
\DeclareMathOperator{\Hilb}{{Hilb}}
 \DeclareMathOperator{\tr}{tr}
\DeclareMathOperator{\Grass}{Grass} \DeclareMathOperator{\End}{End}
\DeclareMathOperator{\Gm}{\bbG_m}
   \DeclareMathOperator{\coind}{CoInd}
\newcommand{\sph}{\fs}
\DeclareMathOperator{\sp1}{sp}
  \DeclareMathOperator{\triv}{triv}
  \DeclareMathOperator{\prepr}{p}
   \DeclareMathOperator{\aux}{aux}
      \DeclareMathOperator{\crit}{c}
   \DeclareMathOperator{\sign}{sign}
\DeclareMathOperator{\cl}{cl}
\DeclareMathOperator{\Crit}{Crit}
\DeclareMathOperator{\CH}{CH}
\DeclareMathOperator{\MO}{MO}
\DeclareMathOperator{\inc}{in}
\DeclareMathOperator{\out}{out}
\newcommand{\sE}{\epsilon}
\DeclareMathOperator{\Rep}{Rep}
\DeclareMathOperator{\Res}{Res}
\DeclareMathOperator{\Ind}{Ind}
\DeclareMathOperator{\Sh}{Sh}
\newcommand{\inj}{\hookrightarrow}
\newcommand{\pt}{\text{pt}}
\newcommand{\Aff}{\bbA}
\newcommand{\C}{\bbC}
\newcommand{\Q}{\bbQ}
\newcommand{\N}{\bbN}
\DeclareMathOperator{\fac}{fac}
\DeclareMathOperator{\pr}{pr}
\newcommand{\loc}{loc}
\DeclareMathOperator{\BM}{BM}
 \gdef\Young(#1){\hbox{$\vcenter
 {\mathcode`,="8000\mathcode`|="8000
  \def,{\global\advance\cols by 1 &}%
  \def|{\cr
        \multispan{\the\cols}\hrulefill\cr
        &\global\cols=2 }%
  \offinterlineskip\everycr{}\tabskip=0pt
  \dimen0=\ht\strutbox \advance\dimen0 by \dp\strutbox
  \halign
   {\vrule height \ht\strutbox depth \dp\strutbox##
    &&\hbox to \dimen0{\hss$##$\hss}\vrule\cr
    \noalign{\hrule}&\global\cols=2 #1\crcr
    \multispan{\the\cols}\hrulefill\cr%
   }
 }$}}
\begin{document}
\title[Preprojective CoHA and critical CoHA]
{On two cohomological Hall algebras}
\date{\today}

\author[Y.~Yang]{Yaping~Yang}
\address{School of Mathematics and Statistics, The University of Melbourne, 813 Swanston Street, Parkville VIC 3010, Australia}
\email{yaping.yang1@unimelb.edu.au}

\author[G.~Zhao]{Gufang~Zhao}
\address{Department of Mathematics,
University of Massachusetts, Amherst, MA, 01003, USA}
\email{gufangzhao@umass.edu}

\subjclass[2010]{
Primary 14N35;  	
Secondary 
17B37,   
14F43.
}
\keywords{Quiver variety, Hall algebra, Yangian, quiver with potentials.}

\begin{abstract}
We compare two cohomological Hall algebras (CoHA). 
The first one is the preprojective CoHA introduced in \cite{YZ15} associated to each quiver $Q$, and each algebraic oriented cohomology theory $A$. It is defined as the $A$-homology of the moduli of representations of the preprojective algebra of $Q$, generalizing the $K$-theoretic Hall algebra of commuting varieties of Schiffmann-Vasserot \cite{SV2}. The other one is the critical CoHA defined by Kontsevich-Soibelman associated to each quiver with potentials. It is defined using the equivariant cohomology with compact support with coefficients in the sheaf of vanishing cycles. In the present paper, we show that the critical CoHA, for the quiver with potential of Ginzburg, is isomorphic to the preprojective CoHA as algebras. As applications, we obtain an algebra homomorphism from the positive part of the Yangian to the critical CoHA.
 \end{abstract}
\maketitle
\tableofcontents
\section{Introduction}

In this paper we study the relation between two cohomological Hall algebras (CoHA). One arises from the study of the symmetry of the cohomology of Nakajima quiver varieties; the other  from Donaldson-Thomas theory of 3-Calabi-Yau categories. 

The former CoHA, called the preprojective CoHA, was introduced in \cite{YZ15} for each quiver $Q$ and each algebraic oriented cohomology theory $A$ in the sense of Levine-Morel \cite{LM}.  This is a CoHA associated to the 2-Calabi-Yau category of representations of the preprojective algebra of the quiver $Q=(I,H)$, where $I$ is the set of vertices and $H$ is the set of arrows. 
The preprojective CoHA, denoted by $\calP(A, Q)=\bigoplus_{v\in\bbN^I}\calP(A, Q)_v$, was used to construct an affine quantum group associated to the Kac-Moody Lie algebra $\fg_Q$ of $Q$ and $A$, which acts on the $A$-homology of quiver varieties. Nakajima-type operators are lifted as elements in the preprojective CoHA. 
In the case when $A$ is the Chow group, we recovered the action of the Yangian on quiver varieties constructed in \cite{Va00}. 
Another special case when $A$ is the $K$-theory and $Q$ is the Jordan quiver, the preprojective CoHA is the $K$-theoretic Hall algebra of commuting varieties studied by Schiffmann-Vasserot, which has been proven to be isomorphic to the elliptic Hall algebra \cite{SV2}.
The preprojective CoHA was used in \cite{YZ2} to construct affine quantum groups coming from arbitrary formal group laws, which includes the quantizations of Manin triples in \cite[\S~4]{Dr} as special cases,  and in \cite{Z15} to construct a Drinfeld realization of the elliptic quantum group and its action on the equivariant elliptic cohomology of quiver varieties which was previously unknown. The definition of this algebra is briefly recalled in \S~\ref{preproj CoHA}. 
Let $\fM(v,w)$ be the Nakajima quiver variety  with dimension vector $v,w\in\bbN^I$ (see \S\ref{subsec:PreproRepn}). By \cite[Theorem~B]{YZ15},  there is a homomorphism of $\bbN^I$-graded $\bbQ[\![t_1,t_2]\!]$-algebras 
\[
a^{\prepr}: \calP(A, Q) \to \End\Big( \bigoplus_{v\in \bbN^I} A_{G_w\times T}(\fM(v, w) ) \Big).
\]
Here $T=\Gm^2$, whose action is specified in  Assumption~\ref{Assu:WeghtsGeneral}.

The latter CoHA  was defined by Kontsevich-Soibelman in the cornerstone work \cite{KS}, associated to the category of representations of a quiver with potential, called the critical CoHA.
Recall that associated to any quiver with potential, there is a Jacobian algebra $J_W$, which is defined as the quotient of the path algebra by the ideal generated by the derivatives of the potential (\cite{DWZ}, see also \S~\ref{sec:crit coha}). The category of modules over $J_W$ is an abelian heart of a standard $t$-structure in a 3-Calabi-Yau category \cite{Keller}.  Kontsevich-Soibelman constructed a critical CoHA, whose underlying vector space is the critical cohomology of the representation space of $J_W$ \cite{KS}. Here critical cohomology means cohomology valued in a vanishing cycle.

In the present paper, we focus on the case when $A$ is the Borel-Moore homology $H^{\BM}$ for the proprojective CoHA. 
For the critical CoHA, for the same quiver $Q=(I,H)$, we consider the extended quiver with potential $(\widehat Q,W)$ studied by Ginzburg in \cite{G} .
Here the set of vertices of  $\widehat Q$ is $I$, and its set of arrows is $H\sqcup H^{\op}\sqcup C$ with $H^{\op}$ in bijection with $H$, and for each $a\in H$, the corresponding arrow in $H^{\op}$, denoted by  $a^*$, is $a$ with orientation reversed. The set 
$C$ is $\{l_i\mid i\in I\}$, with $l_i$ an edge loop at the  vertex $i \in I$.
The potential of $\widehat Q$ is
\[W:=\sum_{i\in I} l_i \cdot \sum_{a\in H}[a, a^*]. \]
We study the critical CoHA  $\calH^{\crit}(\widehat Q, W):=\bigoplus_{v\in \N^I}\calH^{\crit}(\widehat Q, W)_v$  associated to $(\widehat Q, W)$, defined in \S \ref{sec:critCoHA} in the same way as Kontsevich-Soibelman  \cite{KS}, taking into account a $T$-action on the representation space of $\widehat{Q}$. 

We construct an isomorphism of the corresponding preprojective CoHA and the critical CoHA.
Thus, the present paper can be considered as a step towards establishing a relation between instanton counting and Donaldson-Thomas invariants of Calabi-Yau 3-folds. Such a relation has been predicted in special cases on the level of generating functions in the study of geometric engineering (see \cite{Sz}). As an application of our comparison, we obtain an algebra homomorphism of the positive part of Yangian into the critical CoHA, which is an embedding for ADE type quivers.

\begin{thm}[Theorem \ref{thm:KS_prep}]\label{ThmIntr:Crit}
With the $T$-action on the representation space of $\widehat{Q}$ described in Assumption~\ref{Assu:WeghtsGeneral}, the following holds.
\begin{enumerate}
\item
For any $w\in\bbN^I$, there is a homomorphism  of $\bbN^I$-graded $\bbQ[t_1,t_2]$-algebras 
\[
a^{\crit}: \calH^{\crit}(\widehat Q, W) \to \End\Big( \bigoplus_{v\in \bbN^I} H^{\BM}_{G_w\times T}(\fM(v, w) ) \Big).
\]
\item For any $v\in\bbN^I$, let  $(-1)^{{v}\choose{2}}:=\prod_{i\in I}(-1)^{{v^i}\choose{2}}$. 
 There is an isomorphism of $\bbN^I$-graded associative  algebras $\Xi: \calP(\BM, Q)\to \calH^{\crit}(\widehat Q, W)$ whose restriction to the degree-$v$ piece is
\[
\Xi_v: \calP(\BM, Q)_v\to \calH^{\crit}(\widehat Q, W)_v, \,\ \,\
f\mapsto (-1)^{{v}\choose{2}} f.
\]
\item
In the setup of (2), 
we have
\[
a^{\crit}\big(\Xi_{v_1}(x)\big)\big((-1)^{{v_2}\choose{2}} m\big)=a^{\prepr}(x)(m)\cdot (-1)^{{v_1+v_2}\choose{2}},
\]
for any $w,v_1,v_2\in\bbN^I$, $x\in \calP_{v_1}$, and $m\in H^{\BM}_{G_w\times T}(\mathfrak{M}(v_2,w))$.
\end{enumerate}
\end{thm}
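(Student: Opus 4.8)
The plan is to route the whole comparison through the \emph{dimensional reduction} isomorphism for (equivariant) vanishing-cycle cohomology; part~(2) is the heart of the matter, while parts~(1) and~(3) follow essentially formally once~(2) is in hand. First I would record the basic feature of Ginzburg's quiver with potential: $W$ is linear in the loop variables. Writing $\Rep(\widehat Q,v)=\Rep(\overline Q,v)\times\bigoplus_{i\in I}\End(V_i)=T^*\Rep(Q,v)\times\fg\fl_{v}$, a direct computation identifies $\tr W_v(x,l)=\langle\mu_v(x),l\rangle$, where $\mu_v$ is the moment map; hence $\Rep(J_W,v)=\Crit(\tr W_v)$ and the zero locus of the linear part of $\tr W_v$ is exactly $\mu_v^{-1}(0)$. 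A dimensional reduction argument then supplies a natural isomorphism
\[
\scrH_v=H^{*}_{T\times G_v}\!\bigl(\Rep(J_W,v),\,\phi_{\tr W_v}\bigr)\ \xrightarrow{\ \sim\ }\ H^{\BM}_{T\times G_v}\!\bigl(\mu_v^{-1}(0)\bigr)=\null^{\BM}\calP_v,
\]
up to a cohomological shift by $\dim\fg\fl_v$ and a Tate twist, which is irrelevant for the $\bbN^I$-graded algebra structure. Since everything is $G_v$-equivariant I would set this up via finite-dimensional approximations of $BG_v$, checking that $\tr W_v$ and the vanishing-cycle functor pass to the limit; call the resulting map $\Xi^0_v$.

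Next I would check that $\Xi^0=\bigoplus_v\Xi^0_v$ is compatible with the two Hall multiplications up to a sign. Both products are of Hecke-correspondence type: with $\Rep(v_1,v_2)$ the variety of representations of $\widehat Q$ of dimension $v_1+v_2$ equipped with a subrepresentation of dimension $v_1$, and $q\colon\Rep(v_1,v_2)\to\Rep(\widehat Q,v_1+v_2)$ proper, $p\colon\Rep(v_1,v_2)\to\Rep(\widehat Q,v_1)\times\Rep(\widehat Q,v_2)$ a vector-bundle map followed by a projection, the $\scrH$-product is $q_*p^*$ applied to the external product built from the Thom--Sebastiani isomorphism $\phi_{\tr W_{v_1}\boxplus\tr W_{v_2}}\cong\phi_{\tr W_{v_1}}\boxtimes\phi_{\tr W_{v_2}}$, and the preprojective product has the same shape with $\mu^{-1}(0)$ in place of $\Rep(J_W,-)$ (recalled in \S\ref{preproj CoHA}). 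The geometric input here is that $W$ is additive on block-upper-triangular representations, which gives $q^*\tr W_{v_1+v_2}=p^*(\tr W_{v_1}\boxplus\tr W_{v_2})$ and makes $q_*p^*$ well defined on vanishing cycles. It then remains to verify — one step at a time, these being local properties of the dimensional-reduction map — that $\Xi^0$ commutes with proper pushforward along $q$, with Gysin pullback along $p$, and with the passage from Thom--Sebastiani to the ordinary Künneth isomorphism for the Borel--Moore homologies of the $\mu^{-1}(0)$'s. Tracking the shifts in the last step (Thom--Sebastiani contributes $\dim\Rep(\widehat Q,v_j)$ while dimensional reduction contributes $\dim\fg\fl_{v_j}$) together with the Koszul sign incurred in commuting the two shifted constant sheaves — equivalently, the discrepancy between Kontsevich--Soibelman's orientation convention for the critical CoHA of a quiver and the convention used for $\null^{\BM}\calP$ — the residual sign comes out to $(-1)^{v_1\cdot v_2}$. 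Since $\binom{v_1+v_2}{2}-\binom{v_1}{2}-\binom{v_2}{2}=v_1\cdot v_2$ componentwise, the renormalization $\Xi_v:=(-1)^{\binom{v}{2}}\Xi^0_v$ absorbs it, and $\Xi$ becomes an isomorphism of $\bbN^I$-graded associative algebras; this is part~(2).

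For parts~(1) and~(3) I would then argue as follows. Given $\Xi$ and the preprojective action $a^{\prepr}$ of \cite[Theorem~B]{YZ15} on $\null^{\BM}\calM(w)$, let $\sigma$ be the invertible rescaling of $\null^{\BM}\calM(w)$ acting by $(-1)^{\binom{v}{2}}$ on the degree-$v$ piece; then $a^{\crit}(\xi):=\sigma\circ a^{\prepr}(\Xi^{-1}(\xi))\circ\sigma^{-1}$ is automatically a $\bbQ[t_1,t_2]$-linear, $\bbN^I$-graded algebra homomorphism $\scrH\to\End(\null^{\BM}\calM(w))$, because $\Xi$ is an algebra isomorphism and $a^{\prepr}$ is a homomorphism; this gives~(1). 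To make the construction intrinsic to the critical side rather than merely transported, I would instead realize the Hecke correspondence giving the quiver-variety action inside Ginzburg's framed quiver with potential, apply dimensional reduction to it, and check that it reproduces the same map. Part~(3) is then the unwinding of the definition: the identity $a^{\crit}(\Xi_{v_1}(x))(m\cdot(-1)^{\binom{v_2}{2}})=(a^{\prepr}(x)(m))\cdot(-1)^{\binom{v_1+v_2}{2}}$ says precisely that $a^{\crit}\circ\Xi$ is the $\sigma$-conjugate of $a^{\prepr}$, with the cross term $(-1)^{v_1\cdot v_2}$ from the previous paragraph accounting for the difference between $(-1)^{\binom{v_1}{2}+\binom{v_2}{2}}$ and $(-1)^{\binom{v_1+v_2}{2}}$.

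I expect the main obstacle to be the middle step: setting up the vanishing-cycle formalism \emph{equivariantly} with the correct shifts and Tate twists, proving that dimensional reduction commutes with proper pushforward, Gysin pullback and Thom--Sebastiani, and — most delicately — pinning down the orientation data carefully enough that the leftover sign is exactly $(-1)^{v_1\cdot v_2}$ and not something else. By contrast, the quiver-with-potential computations (linearity of $W$ in the loops, recovery of $\mu_v$, additivity of $W$ on flags) should be explicit and routine, and the deduction of~(1) and~(3) from~(2) is formal.
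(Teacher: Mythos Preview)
Your overall strategy---dimensional reduction plus compatibility with the three ingredients of the Hall product (proper pushforward, affine-bundle pullback, Thom--Sebastiani)---matches the paper's, and your identification of $\binom{v_1+v_2}{2}-\binom{v_1}{2}-\binom{v_2}{2}=v_1\cdot v_2$ as the combinatorial hinge is correct. But your account of \emph{where} the factor $(-1)^{v_1\cdot v_2}$ comes from is wrong, and this is not minor bookkeeping: it is precisely the place where Assumption~\ref{Assu:T_main} enters.

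The sign is not a Koszul sign from commuting shifted constant sheaves, nor an orientation-data artifact (for a quiver with potential the orientation data is canonical). The paper routes the comparison through an auxiliary algebra $\calH$ with the same underlying module as $\scrH$ but a different product. Comparing $\calP$ with $\calH$ produces an extra equivariant Euler class $e(\gamma)$, coming from the embedding of $\calV^C$ into $\Rep(C,v_1+v_2)$ along the loop direction (Proposition~\ref{prop:prep and crit}); comparing $\calH$ with $\scrH$ produces a factor $1/e(\iota)$ for the zero-section $\iota\colon G\times_P T^*Y\hookrightarrow T^*X$ (Proposition~\ref{prop:aux and crit}). The net discrepancy between $m^{\prepr}$ and $m^{\crit}$ is therefore the ratio $e(\gamma)/e(\iota)$. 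Under Assumption~\ref{Assu:T_main}---both $\Gm$-factors acting on $\Rep(C,v)$ with weight $-1$---each linear factor of $e(\gamma)$ is literally the negative of the corresponding factor of $e(\iota)$, and the ratio collapses to $(-1)^{v_1\cdot v_2}$. Under a different $T$-action this ratio is a genuine rational function, not a sign: Appendix~\ref{app:sign-twist} works out the case of the trivial $T$-action on $\Rep(C,v)$, where the correction becomes $e(\fn_v)$ and one lands in a sign-twisted algebra $\calH^\sE$ rather than in $\scrH$ itself. So you cannot hope to read the sign off from sheaf-theoretic normalizations alone; the $T$-weights on the loop variables are doing the actual work, and your argument as written gives no mechanism by which Assumption~\ref{Assu:T_main} would be used.

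A smaller point: the paper does not define $a^{\crit}$ by transport through $\Xi$. It builds the action independently via the framed correspondence~\eqref{corr:the one in KS for mod} (Theorem~\ref{thm:crit CoHA action} for $\calH$, then Theorem~\ref{thm:KS_act} for $\scrH$) and only afterwards proves compatibility~(3). Your fallback sentence (``realize the Hecke correspondence inside Ginzburg's framed quiver with potential'') is exactly what Example~\ref{ex:Q heart} sets up, so that part is on the right track.
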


The algebra ${\calP}(\BM, Q)$ has a presentation as a shuffle algebra (see \cite[Section 3.2]{YZ15}), hence consequently so is $\calH^{\crit}(\widehat Q, W)$. Moreover, 
 for special type of quivers, ${\calP}(\BM, Q)$ is known to be related to various versions of Yangians, which are affine type quantum groups. Therefore, the above theorem also provides a relation between $\calH^{\crit}(\widehat Q, W)$ with these Yangians. 
\Omit{
In \cite[Theorem~D]{YZ15}, we proved there is an algebra homomorphism from the Yangian $ Y_\hbar^+(\fg_Q)$ to 
$\widetilde{\calP}(\BM, Q)$, where the latter is a sign twist of the preprojective CoHA ${\calP}(\BM, Q)$ via the Euler-Ringel form, which is recalled in \S\ref{sec:Yangian}. The quantization parameter $\hbar$ of the Yangian corresponds to the weight of the torus action. When the quiver $Q$ is of type ADE, it induces an isomorphism of $Y_\hbar^+(\fg_Q)$ with a spherical subalgebra of $\widetilde{\calP}(\BM, Q)$. A direct consequence of \cite[Theorem~D]{YZ15} and Theorem~\ref{ThmIntr:Crit} is the existence of an algebra homomorphism $Y_\hbar^+(\fg_Q)\to \widetilde{\calH}_{\hbar}(\widehat Q, W)$, where the latter is the same sign twist of ${\calH}(\widehat Q, W)$ via the Euler-Ringel form (see Corollary~\ref{cor:Yangian}).
} 
\Omit{
The conditions on the torus action here is essential for Theorem~\ref{ThmIntr:Crit}. In Appendix~\ref{app:sign-twist}, we investigate the two CoHAs when the $T$-action has different weights than Assumption~\ref{Assu:WeghtsGeneral}. We obtain an algebra homomorphism from one to a twisted version of the other. }

The main technical ingredient in passing from the critical cohomology to the Borel-Moore homology is the dimension reduction \cite[Theorem~A1]{D}, which is a cohomological version of the dimension reduction \cite{BBS}. In Appendix~\ref{App}, we prove that the dimension reduction is compatible with certain pull-backs and push-forwards. This allows us to give a description of the critical CoHA in terms of Borel-Moore homology, and hence prove Theorem~\ref{ThmIntr:Crit}.

\Omit{
In special cases, quiver varieties are instanton moduli spaces, and their Euler characteristic can be viewed as instanton counting. 
In other words, the Euler characteristic of certain modules of the preprojective CoHA yields invariants of instantons. 
On the other hand, in \cite{KS}, the generating series of the critical CoHA is defined as a generalization of the Donaldson-Thomas invariants of 3-folds.
Therefore, we consider Theorem \ref{ThmIntr:Crit} as a step towards establishing a relation between instanton counting and Donaldson-Thomas invariants of 3-folds. A precise problem in this direction was raised by Soibelman in \cite[\S~1.4c)]{S14}, i.e., in the case of PT-moduli of resolved conifold, find the relation between operators from the critical CoHA and the Nakajima-type operators on $\coprod_n\Hilb^n(\Aff^2)$. This special case will be discussed in Example~\ref{Ex:ResolvConifold}.
}

\subsection*{Acknowledgments} 
This paper is motivated by a discussion one of us had with Sergey Mozgovoy in the spring school on
Kac conjectures and quiver varieties at Wuppertal in 2015.  
The results of the present paper were first published on the arXiv as a part of \cite{YZ15}, and were later split therefrom due to exposition reasons. 
Shortly afterwards, Ren-Soibelman independently obtained the same results in \cite{RS}. 
Then,
Ben Davison pointed out a significant sign error in 
the earlier stated results in both \cite{YZ15} and \cite{RS},
and published a corrected version with proofs as an appendix to
\cite{RS}.
This sign error has been corrected in the present paper, and the proofs presented here are entirely different than that of Davison, although both proofs rely on \cite{D}. 
The present paper features a 2-dimensional torus action and precise conditions on the action which makes Theorem~\ref{ThmIntr:Crit} true equivariantly.

The authors thank Ben Davison for pointing out the aforementioned error in an earlier version of Theorem~\ref{ThmIntr:Crit}, and for drawing our attention to the appendix of \cite{RS}.
Most of the work was done when Y.Y. was hosted by MPIM and  G.Z. was supported by Fondation Sciences Math\'ematiques de Paris and CNRS.

\section{The critical cohomological Hall algebra} \label{sec:critCoHA}
Let $\Gamma=(\Gamma_0, \Gamma_1)$ be a quiver, where $\Gamma_0$ is the set of vertices, and $\Gamma_1$ the set of arrows. We denote the path algebra of $\Gamma$  by $\C \Gamma$. 
Let $W$ be a potential of $\Gamma$, that is, $W=\sum_{u} c_u u$ with $c_{u} \in \C$, and $u$'s are cycles in $\Gamma$. 

Given a cycle $u=a_1\dots a_n$ and an arrow $a\in \Gamma_1$. The cyclic derivative is defined to be
\[
\frac{\partial u}{\partial a}=\sum_{i: a_i=a} a_{i+1}\dots a_n a_1\dots a_{i-1} \in \C \Gamma.
\] as an element of $\C \Gamma$. We extend the cyclic derivative to the potential by linearity.

For any dimension vector $v=(v^i)_{i\in \Gamma_0}\in \bbN^{\Gamma_0}$, the representation space of $\Gamma$ with dimension vector $v$ is denoted by $\bold{M}_{\Gamma, v}$. That is, let $V=\{V^i\}_{i\in \Gamma_0}$ be a $|\Gamma_0|$-tuple of vector spaces so that $\dim(V^i)=v^i$.  Then, 
\[
\bold{M}_{\Gamma, v}:=\bigoplus_{h\in \Gamma_1}\Hom(V^{\out(h)},V^{\inc(h)}).
\]
The group $G_{v}:=\prod_{i\in I}\GL_{v^i}$ acts on the representation space $\bold{M}_{\Gamma, v}$ via conjugation.

We recall the critical CoHA defined by Kontsevich-Soibelman, and give an equivalent description for a special type of potential interesting to us.

\subsection{Critical CoHA via critical cohomology}\label{subsec:app_KS_crit_coh}
For a quiver with potential $(\Gamma, W)$ and dimension vector $v\in \bbN^I$,  denote by
$\tr(W)_v$ the trace function on $\bold{M}_{\Gamma, v}$. Let $\Crit(\tr W_v)$ be the critical locus of $\tr W_v$. Let $\varphi_{\tr W_{v}}$ be the  vanishing cycle complex on $\bold{M}_{\Gamma, v}$ with support on $\Crit(\tr W_v)$.
We have an isomorphism 
\[H_{c, G_v}^*(\bold{M}_{\Gamma, v}, \varphi_{\tr W_v})\cong H_{c, G_v}^*(\Crit(\tr W_v), \varphi_{\tr W_v}).\]

For $v_1,v_2\in \bbN^I$ such that $v=v_1+v_2$, let $V_1\subset V $ be a $|\Gamma_0|$-tuple of subspaces of $V$ with dimension vector $v_1$.

Define $\bold{M}_{\Gamma, v_1, v_2}:=\{x\in \bold{M}_{\Gamma, v}\mid x(V_1)\subset V_1\}$.  We write $G:=G_{v}$ for short. Let $P\subset G_v$ be the parabolic subgroup preserving the subspace $V_1$ and $L:=G_{v_1}\times G_{v_2}$ be the Levi subgroup of  $P$. We have the following correspondence of $L$-varieties. 
\begin{equation}\label{basic corresp}
\xymatrix
{\bold{M}_{\Gamma, v_1}\times \bold{M}_{\Gamma, v_2}&\bold{M}_{\Gamma, v_1, v_2} \ar[l]_(0.4){p}\ar[r]^(0.4){\eta} &\bold{M}_{\Gamma, v_1+v_2},
}\end{equation}
where $p$ is the natural projection and $\eta$ is the embedding. 
The trace functions $\tr W_{v_i}$ on $\bold{M}_{\Gamma, v_i}$ induce a function $\tr W_{v_1}\boxplus \tr W_{v_2}$
on the product $\bold{M}_{\Gamma, v_1}\times \bold{M}_{\Gamma, v_2}$.
We define $\tr(W)_{v_1, v_2}$ on $\bold{M}_{\Gamma, v_1, v_2}$ to be
\[
\tr(W)_{v_1, v_2}:=p^*(\tr W_{v_1}\boxplus \tr W_{v_2})=\eta^*(\tr W_{v_1+v_2}).
\]
Note that we have
$
p^{-1} (\Crit(\tr W_{v_1})\times \Crit(\tr W_{v_2}))
\supsetneqq
\eta^{-1}(\Crit(\tr W_{v_1+v_2})).
$

For simplicity, we assume that $\Crit(\tr W_v)\subseteq (\tr W_v)^{-1}(0)$ so that we have the Thom-Sebastiani isomorphism as below \cite{M01}.

The Hall multiplication $m^{\crit}$ of the critical CoHA is the composition of the following \cite{KS}.
\begin{enumerate}
\item The Thom-Sebastiani isomorphism
\begin{align*}
&H_{c, G_{v_1}}^*(\bold{M}_{\Gamma, v_1}, \varphi_{\tr W_{v_1}})^\vee \otimes 
H_{c, G_{v_2}}^*(\bold{M}_{\Gamma, v_2}, \varphi_{\tr W_{v_2}})^\vee
\cong 
H_{c, L}^*(\bold{M}_{\Gamma, v_1}\times \bold{M}_{\Gamma, v_2}, \varphi_{\tr W_{v_1} \boxplus \tr W_{v_2}} )^\vee.
\end{align*}
%---------------------------------------------------------------------------
\item 
Using the fact that $\bold{M}_{\Gamma, v_1, v_2}$
is an affine bundle over $\bold{M}_{\Gamma, v_1}\times \bold{M}_{\Gamma, v_2}$, and
$\tr W_{v_1, v_2}$ is the pullback 
of $\tr W_{v_1} \boxplus \tr W_{v_2}$, we have
\begin{align*}
p^*:H_{c, L}^*(\bold{M}_{\Gamma, v_1}\times \bold{M}_{\Gamma, v_2},
 \varphi_{\tr W_{v_1} \boxplus \tr W_{v_2}})^\vee
\cong
&H_{c, L}^*(\bold{M}_{\Gamma, v_1, v_2}, 
\varphi_{\tr W_{v_1, v_2} } )^{\vee}
% \\ \cong &H_{c, G}^*(G\times_{\bold{M}_{\Gamma, v_1, v_2}, \varphi_{\tr W_{v_1, v_2} } )^{\vee}.
\end{align*}
%---------------------------------------------------------------------------
\item
Using the fact $\tr W_{v_1, v_2}$ is the restriction of $\tr W_{v}$ to
$\bold{M}_{\Gamma, v_1, v_2}$. We have
\begin{align*}
\eta_*:H_{c, L}^*(\bold{M}_{\Gamma, v_1, v_2}, 
\varphi_{\tr W_{v_1, v_2} } )^{\vee}\to &
H_{c, L}^*(\bold{M}_{\Gamma, v}, 
\varphi_{\tr W_{v} } )^{\vee}.
\end{align*}
\item 
Pushforward along 
$G\times_{P} \bold{M}_{\Gamma, v}\to \bold{M}_{\Gamma, v}, (g, m)\mapsto gmg^{-1}$,
 we get
\[
H_{c,L}(\bold{M}_{\Gamma, v}, 
\varphi_{\tr W_{v} } )^{\vee}\cong H_{c,P}(\bold{M}_{\Gamma, v}, 
\varphi_{\tr W_{v} } )^{\vee}\cong H_{c, G}^*(G\times_{P} \bold{M}_{\Gamma, v}, 
\varphi_{\tr W_{v} } )^{\vee}\to 
H_{c, G}^*(\bold{M}_{\Gamma, v}, 
\varphi_{\tr W_{v} } )^{\vee}.
\]
\end{enumerate}

\begin{definition}\label{def:crit CoHA in KS}
The critical CoHA is $\calH^{\crit}(\Gamma, W):=\bigoplus_{v\in \N^I}H_{c, G_v}^*(\bold{M}_{\Gamma, v}, \varphi_{\tr W_v})^\vee$, endowed with the Hall multiplication $m^{\crit}$ described above.
\end{definition}

\begin{remark}
\begin{enumerate}
\item 
Let $J_{\Gamma, W}$ be the Jacobian algebra of the quiver with potential $(\Gamma, W)$. (See \cite{DWZ} for details.)
Representations of the Jacobian algebra $\Rep(J_{\Gamma, W}, v)$ as a Zariski closed subvariety
of $\bold{M}_{\Gamma, v}$ is the same as $\Crit(\tr W_v)$. Thus, we have
\[
H_{c, G_v}^*(\bold{M}_{\Gamma, v}, \varphi_{\tr W_v})\cong 
H_{c, G_v}^*(\Rep(J_{\Gamma, W}, v), \varphi_{\tr W_v}).
\]
\item 
The definition of critical CoHA in \cite{KS} is more general. 
The critical cohomology of more general subvarieties $M_{v}^{\sp1} \subset \Crit(\tr W_v)$ was considered. 
In our setup, we only take the maximal choice $M_{v}^{\sp1} =\Crit(\tr W_v)$, and we assume the quiver with potential admits a cut satisfying Assumption \ref{Assu}.
\Omit{
\item The assumption that $\Crit(\tr W_v)$ is contained in the zero-locus of $\tr W_v$ is always true for the cases interesting to us, i.e., under Assumption~\ref{Assu} by \eqref{equ:trW}. Readers interested in more general cases can take suitable choice of $M_{v}^{\sp1}$ to avoid this issue.
}
\item In the definition given in \cite{KS}, cohomological degree was taken into consideration.
\end{enumerate}
\end{remark}

\subsection{Quiver with potential and cut}\label{sec:crit coha}

A cut $C$ of $(\Gamma, W)$ is a subset $C\subset \Gamma_1$ such that $W$ is homogeneous of degree 1 with respect to the grading defined on arrows by

\begin{displaymath} 
\deg a= 
\left\{
     \begin{array}{lr}
       1 & : a \in C, \\
       0 & : a \notin C.
     \end{array}
   \right.
   \end{displaymath}
In this section, we assume the quiver with potential $(\Gamma, W)$ admits a cut $C$. Furthermore, we assume the following.
\begin{assumption}
\label{Assu}
The cut $C$ consists of only edge-loops. 
\end{assumption}
In particular under Assumption \ref{Assu}, each term in $W$ has at least one edge-loop.
Moreover, we have the isomorphism $
\bold{M}_{C, v}\cong \bigoplus_{i\in \Gamma_0} (\fg\fl_{v^i})^{n_i}$, where $n_i$ is the number of loops in $C$ at vertex $i\in \Gamma_0$. As before, we have $\bold{M}_{C, v_1, v_2}=\{x\in \bold{M}_{C, v_1+v_2}\mid x(V_1)\subset V_1\}$. 
Note that if $C$ consists of exactly one edge-loop for each vertex, we identify $\bold{M}_{C, v}$ with $\fg_v=\bigoplus_{i\in \Gamma_0} \fg\fl_{v^i}$, the Lie algebra of $G_v$, and $\bold{M}_{C, v_1, v_2}$ with the parabolic subalgebra $\fp_{v_1, v_2}$, which is the Lie algebra of $P_{v_1, v_2}$.

Let $\Gamma\backslash C$ be the new quiver obtained from $\Gamma$ by removing the cut $C$. 
We have the decomposition $\bold{M}_{\Gamma, v}\cong 
\bold{M}_{\Gamma\backslash C, v}\oplus \bold{M}_{C, v}$ for $v\in \mathbb{N}^{\Gamma_0}$. Write $v=v_1+v_2$.  By \eqref{basic corresp}, there are the two correspondences $\bold{M}_{C, v_1}\times \bold{M}_{C, v_2}\leftarrow \bold{M}_{C, v_1, v_2}\rightarrow \bold{M}_{C, v}$, and 
$\bold{M}_{\Gamma \backslash C, v_1}\times \bold{M}_{\Gamma \backslash C, v_2} \leftarrow 
\bold{M}_{\Gamma \backslash C, v_1, v_2} \rightarrow \bold{M}_{\Gamma \backslash C, v}$. It induces the following diagram with the square being Cartesian
\begin{equation}
\label{diag:j23}
\xymatrix@R=1.5em  {
%------line 1-----
\bold{M}_{\Gamma, v_1}\times \bold{M}_{\Gamma, v_2} 
& 
(\bold{M}_{\Gamma \backslash C, v_1}\times \bold{M}_{\Gamma \backslash C, v_2})\oplus
\bold{M}_{C, v_1, v_2}
\ar[l]_(0.6){p_1}\ar[r]^{i_1}
\ar@/^/@{.>}[d]^{j_3}
& 
(\bold{M}_{\Gamma \backslash C, v_1}\times \bold{M}_{\Gamma \backslash C, v_2}) \oplus 
\bold{M}_{C, v}
\ar@/^/@{.>}[d]^{j_2\times\id_{\bold{M}_{C, v}}}
\\
%------line 2-----
&\bold{M}_{\Gamma \backslash C, v_1, v_2}
 \oplus  \bold{M}_{C, v_1, v_2}
\ar[u]^{p_3}
\ar[ul]^{p_{\Gamma}} \ar[rd]_{{\eta_\Gamma}}\ar@{^{(}->}[r]^{i_3} &
\bold{M}_{\Gamma \backslash C, v_1, v_2}
\oplus  \bold{M}_{C, v}
 \ar[d]_{{i_2}}\ar[u]^{{p_2}} \\
 %------line 3-----
&&  \bold{M}_{\Gamma \backslash C, v}\oplus \bold{M}_{C, v}.
}\end{equation}
The embeddings $j_2\times \id_{\bold{M}_{C, v}}$ and $j_3:=j_2\times \id_{\bold{M}_{C, v_1, v_2}}$ 
come from the inclusion $j_2: \bold{M}_{\Gamma \backslash C, v_1}\times \bold{M}_{\Gamma \backslash C, v_2}\subset \bold{M}_{\Gamma \backslash C, v_1, v_2}$.

For each $a\in C$, the derivative 
$\frac{\partial W}{\partial a}$ is a linear combination of cycles of $\Gamma\backslash C$, as $W$ is homogenous of degree 1. For any  $x\in \bold{M}_{\Gamma\backslash C, v}$, denote by $\frac{\partial W}{\partial a}(x)$ the composition of the linear maps $x=(x_h)_{h\in \Gamma_1\backslash C}$ along $\frac{\partial W}{\partial a}$. Then, $(\frac{\partial W}{\partial a}(x))_{a\in C}$ is an element in  $\bigoplus_{i\in \Gamma_0} (\fg\fl_{v^i})^{n_i}\cong \bold{M}_{C, v}$. 

Consider the quotient of the path algebra $\C (\Gamma \backslash C)$ by the relations 
$\{{\partial W}/{\partial a} \mid a\in C \}$. The representation variety of this quotient algebra is denoted by
\[
\bold{J}_{\Gamma\backslash C, v}:=\{ x\in \bold{M}_{\Gamma \backslash C, v} \mid {\partial W}/{\partial a}(x)=0,  \forall a\in C \}.\] 
We have the inclusion 
$\bold{J}_{\Gamma\backslash C, v}\oplus \bold{M}_{C, v}\subset  \bold{M}_{\Gamma\backslash C, v}\oplus  \bold{M}_{C, v}=\bold{M}_{\Gamma, v}$.
The two maps $p_1, i_1$ in diagram \eqref{diag:j23} induce the following two maps
 \[\xymatrix
 {
 \bold{J}_{\Gamma\backslash C, v_1}\times \bold{J}_{\Gamma\backslash C, v_2}\times  \bold{M}_{C, v_1}\times \bold{M}_{C, v_2}&
 \bold{J}_{\Gamma\backslash C, v_1}\times \bold{J}_{\Gamma\backslash C, v_2}\times \bold{M}_{C, v_1, v_2}
 \ar[l]_(0.45){ \overline{p_1}}\ar[r]^{\overline{i_1}}
 &
 \bold{J}_{\Gamma\backslash C, v_1}\times \bold{J}_{\Gamma\backslash C, v_2}\times  \bold{M}_{C, v}
 }\]
Define
\begin{align*}
\bold{J}_{\Gamma\backslash C, v_1, v_2}:=
&\bold{M}_{\Gamma\backslash C, v_1, v_2} \cap  \bold{J}_{\Gamma\backslash C, v_1+v_2}
= 
\{x \in \bold{M}_{\Gamma\backslash C, v_1, v_2} \mid 
{\partial W}/{\partial a}(x)=0,  \forall a\in C\}.
\end{align*}
In diagram \eqref{diag:j23}, replacing  $\bold{M}_{\Gamma\backslash C, v}$  by $\bold{J}_{\Gamma\backslash C, v}$ and similarly for the other dimension vectors, we get a similar diagram with the maps denoted by the original maps with an overline. 
But  $\bold{J}_{\Gamma\backslash C, v_1, v_2}=\eta_{\Gamma\backslash C}^{-1}( \bold{J}_{\Gamma\backslash C, v_1+v_2})\subsetneqq p_{\Gamma\backslash C}^{-1} (\bold{J}_{\Gamma\backslash C, v_1}\times \bold{J}_{\Gamma\backslash C, v_2} )$.

Let $\pr: \bold{M}_{C, v_1,v_2}\to \bold{M}_{C, v_1}\times \bold{M}_{C, v_2}$ be the natural projection. 
Consider the following subvariety $\bold{Y}\subset \bold{M}_{C, v_1,v_2}\times \bold{M}_{\Gamma\backslash C, v_1}\times \bold{M}_{\Gamma\backslash C, v_2}$. 
\[
\bold{Y}:=\{(l, x)\mid  l\in \bold{M}_{C, v_1,v_2}, x\in \bold{M}_{\Gamma\backslash C, v_1}\times \bold{M}_{\Gamma\backslash C, v_2},  \,\ \text{such that $({\partial W}/{\partial a})_{a\in C}(x)=\pr(l)$}\}.
\]
We have the following diagram.
\begin{equation}\label{corresp with fiber}
\xymatrix@R=1.5em@C=2.5em{
\bold{M}_{\Gamma\backslash C, v_1}\times \bold{M}_{\Gamma\backslash C, v_2}
\times \bold{M}_{C, v}
\ar@{^{(}->}[r]^(0.7){\iota\times\id_{\bold{M}_{C, v}}}&\bold{Y}\times \bold{M}_{C, v}&
\bold{M}_{\Gamma\backslash C, v_1, v_2}\times \bold{M}_{C, v}  
\ar[l]_(0.6){\omega\times\id_{\bold{M}_{C, v}}} \ar[r]&
\bold{M}_{\Gamma \backslash C, v} \times \bold{M}_{C, v}
\\
\bold{J}_{\Gamma\backslash C, v_1}\times \bold{J}_{\Gamma\backslash C, v_2}\times \bold{M}_{C, v}
\ar@{^{(}->}[u]
&&
\bold{J}_{\Gamma\backslash C, v_1, v_2} \times \bold{M}_{C, v} 
\ar[ll]_{\overline{\omega}\times\id_{\bold{M}_{C, v}}}\ar[r]^{\overline{i_2}}\ar@{^{(}->}[u]&
\bold{J}_{\Gamma\backslash C, v} \times \bold{M}_{C, v} \ar@{^{(}->}[u]
%%line 2
}
\end{equation}
The maps in diagram \eqref{corresp with fiber} are as follows. 
Any map in \eqref{corresp with fiber} restricting on $\bold{M}_{C, v}$ is the identity map. 
The map $\omega$ is induced from the natural map 
$\bold{M}_{\Gamma\backslash C, v_1, v_2} \to \bold{Y}, x\mapsto (({\partial W}/{\partial a})_{a\in C}(x) , \pr(x))$, where $\pr:\bold{M}_{\Gamma\backslash C, v_1, v_2} \to\bold{M}_{\Gamma\backslash C, v_1}\times \bold{M}_{\Gamma\backslash C, v_2}$ is the natural projection. The map $\overline{\omega}$ is the restriction of $\omega$.
The map $\iota: \bold{M}_{\Gamma\backslash C, v_1}\times \bold{M}_{\Gamma\backslash C, v_2}
\inj \bold{Y}$ is given by $x\mapsto (({\partial W}/{\partial a})_{a\in C}(x), x)$.

\begin{lemma}\label{pullback}
There is a natural isomorphism $$(\bold{J}_{\Gamma\backslash C, v_1}\times \bold{J}_{\Gamma\backslash C, v_2})\times_{\bold{Y}} \bold{M}_{\Gamma\backslash C, v_1, v_2}\cong \bold{J}_{\Gamma\backslash C, v_1, v_2}.$$
\end{lemma}
\begin{proof}
Indeed,
\begin{align*}
&(\bold{J}_{\Gamma\backslash C, v_1}\times \bold{J}_{\Gamma\backslash C, v_2})\times_{\bold{Y}} \bold{M}_{\Gamma\backslash C, v_1, v_2}\\
=&
\{
(x_1, x_2)\in \bold{J}_{\Gamma\backslash C, v_1}\times \bold{J}_{\Gamma\backslash C, v_2}, 
x \in \bold{M}_{\Gamma\backslash C, v_1, v_2} \mid 
\pr(x)=(x_1, x_2),  ({\partial W}/{\partial a})_{a\in C}(x)=0\}\\
=&\{ x \in \bold{M}_{\Gamma\backslash C, v_1, v_2} \mid 
({\partial W}/{\partial a})_{a\in C}(x)=0\}
=\bold{J}_{\Gamma\backslash C, v_1, v_2}.
\end{align*}
This completes the proof.
\end{proof}
\subsection{Another description of the critical CoHA}
\label{sec:aux CoHA in sec1}
We follow the convention in \cite{D},  let $D^b(X)$ be the derived category of constructible sheaves of 
$\bbQ$-vector spaces on a variety $X$, and $\mathbb{D}$ be the Verdier duality functor for $D^b(X)$. 
Denote by $H_{c}^*(X)^\vee$ the Verdier dual of the compactly supported cohomology of $X$. Write the structure map $X\to \pt$  as $p_X$. The Borel-Moore homology of $X$ is
\[
H^{\BM}(X) \cong H_{c}^*(X)^\vee\cong \mathbb{D}_{\pt}(p_{X!} \Q_{X})=p_{X*} 
\mathbb{D}  \Q_{X}.
\]
If $X$ carries a $G$-action, we denote by $H_{c, G}^*(X)^\vee$ the corresponding equivariant cohomology of $X$.

The following  equivalent description of the critical CoHA using Borel-Moore homology will be proved in \S~\ref{sec:proof1}. Let $e(\iota)$ be the equivariant Euler class of the embedding $\iota$ in \eqref{corresp with fiber}. 
\begin{theorem}\label{prop: crit CoHA with the one in KS}
Assume the quiver with potential $(\Gamma, W)$ admits a cut $C$
satisfying Assumption \ref{Assu}. Then there is an isomorphism of graded vector spaces $\calH^{\crit}(\Gamma, W)\cong \bigoplus_{v\in \bbN^{\Gamma_0}} H^{\BM}_{G_v}(\bold{J}_{\Gamma\backslash C, v} \times \bold{M}_{C, v}, \bbQ)$, under which the multiplication $m^{\crit}$ on $\calH^{\crit}(\Gamma, W)$ is equal to
$\overline{i_{2}}_*  \circ \frac{1}{e(\iota)}\omega_{\overline{\omega}}^{\sharp} \circ \overline{i_{1}}_* \circ \overline{p_1}^{*}$. 
\end{theorem}
The maps in the composition are the following. 
\begin{enumerate}
\item The K\"unneth morphism
$
H^{\BM}_{G_{v_1}}(\bold{J}_{\Gamma\backslash C, v_1} \times \bold{M}_{C, v_1})
\otimes H^{\BM}_{G_{v_2}}(\bold{J}_{\Gamma\backslash C, v_2} \times \bold{M}_{C, v_2})
 \to 
H^{\BM}_{L}(\bold{J}_{\Gamma\backslash C, v_1}\times \bold{J}_{\Gamma\backslash C, v_2} \times \bold{M}_{C, v_1}\times  \bold{M}_{C, v_2}). 
$
\item 
$\overline{i_{1}}_* \circ \overline{p_1}^{*}: 
H^{\BM}_{L}(\bold{J}_{\Gamma\backslash C, v_1}\times \bold{J}_{\Gamma\backslash C, v_2}  \times \bold{M}_{C, v_1}\times  \bold{M}_{C, v_2})\to
H^{\BM}_{L}(\bold{J}_{\Gamma\backslash C, v_1} \times \bold{J}_{\Gamma\backslash C, v_2}  \times \bold{M}_{C, v}). 
$
\item 
Composing the refined Gysin pullback\footnote{We follow the convention of \cite[\S~6.6.2]{LM} for refined Gysin pullback and use subindex to remember the domain and target. However, we adapt $\sharp$ instead of $!$ to avoid confusion with $!$-pullback.} of $\omega\times\id_{\bold{M}_{C,v}}$ along $\overline{\omega}\times\id_{\bold{M}_{C,v}}$, which for simplicity  is denoted by $\omega_{\overline{\omega}}^{\sharp}$, with $\frac{1}{e(\iota)}$, we have the following map
\[
\frac{1}{e(\iota)}\omega_{\overline{\omega}}^{\sharp}: H^{\BM}_{L}( 
\bold{J}_{\Gamma\backslash C, v_1}\times \bold{J}_{\Gamma\backslash C, v_2} \times \bold{M}_{C, v})
\to H^{\BM}_{L}( \bold{J}_{\Gamma\backslash C, v_1, v_2} \times \bold{M}_{C, v} )[\frac{1}{e(\iota)}]. 
\] 
\item
The pushforward $
\overline{i_{2}}_* : 
H^{\BM}_{L}( \bold{J}_{\Gamma\backslash C, v_1, v_2} \times \bold{M}_{C, v} )\to 
H^{\BM}_{L}( \bold{J}_{\Gamma\backslash C, v} \times \bold{M}_{C, v} )
$. 
\item
Pushforward along 
$G\times_{P} (\bold{J}_{\Gamma\backslash C, v} \times \bold{M}_{C, v})\to \bold{J}_{\Gamma\backslash C, v} \times \bold{M}_{C, v}, (g, m)\mapsto gmg^{-1}$,
 we get
$H^{\BM}_{P}( \bold{J}_{\Gamma\backslash C, v} \times \bold{M}_{C, v} )
\cong 
H^{\BM}_{G}( G\times_P(\bold{J}_{\Gamma\backslash C, v} \times \bold{M}_{C, v} ))
\to 
H^{\BM}_{G}( \bold{J}_{\Gamma\backslash C, v} \times \bold{M}_{C, v} )
$.
\end{enumerate}

This composition $\overline{i_{2}}_*  \circ \frac{1}{e(\iota)}\omega_{\overline{\omega}}^{\sharp} \circ \overline{i_{1}}_* \circ \overline{p_1}^{*}$ {\it a priori} is only defined after inverting $e(\iota)$. However, it follows from Theorem~\ref{prop: crit CoHA with the one in KS} that it is  well-defined before localization. 

\begin{remark}\label{rem:dependent}
The multiplication $\overline{i_{2}}_*  \circ \frac{1}{e(\iota)}\omega_{\overline{\omega}}^{\sharp} \circ \overline{i_{1}}_* \circ \overline{p_1}^{*}$ does not depend on the choices of $\bold{Y}$ in correspondence \eqref{corresp with fiber}, as long as Lemma~\ref{pullback} holds. Indeed, assume we have the following diagram 
\[
\xymatrix@R=.7em {
\bold{Y'} & \bold{M}_{\Gamma\backslash C, v_1, v_2} \ar[l]_{\omega'}\\
\bold{Y} \ar[u]&\bold{M}_{\Gamma\backslash C, v_1, v_2}\ar[l]_{\omega} \ar@{=}[u]\\
\bold{J}_{\Gamma\backslash C, v_1}\times \bold{J}_{\Gamma\backslash C, v_2}
 \ar[u]& \bold{J}_{\Gamma\backslash C, v_1, v_2} \ar[l]_(0.4){\overline{\omega}} \ar[u]\\
}
\] with all squares Cartesian and $\bold{Y}\to \bold{Y'}$ an closed embedding of manifolds. Then, by the excess intersection formula (see \cite[Theorem 6.6.9]{LM}), we have
$\omega'^{\sharp}_{\overline{\omega}} =
\omega_{\overline{\omega}}^{\sharp}e(E)$, where $E$ is the excess normal bundle of $\bold{Y}\to \bold{Y'}$. This implies that 
\[
\frac{1}{e(\iota)}\omega_{\overline{\omega}}^{\sharp}=\frac{1}{e(\iota')} \omega'^{\sharp}_{\overline{\omega}}.
\]
\end{remark}
\subsection{An auxiliary CoHA}\label{subsec:AuxCoHA}
By Theorem \ref{prop: crit CoHA with the one in KS}, we can alternatively define the critical CoHA as the vector space $\calH^c(\Gamma,W,C)=\bigoplus_{v\in \bbN^{\Gamma_0}} H^{\BM}_{G_v}(\bold{J}_{\Gamma\backslash C, v} \times \bold{M}_{C, v}, \bbQ)$ endowed with the multiplication $\overline{i_{2}}_*  \circ \frac{1}{e(\iota)}\omega_{\overline{\omega}}^{\sharp} \circ \overline{i_{1}}_* \circ \overline{p_1}^{*}$.  
This description has the advantage of being defined for any algebraic oriented cohomology theory in the sense of Levine-Morel \cite{LM}, without resorting to a theory of "exponential motives". That is, for any oriented cohomology theory $A$, we can define
$\calH^c(A,\Gamma,W,C)$ to be the graded vector space $\bigoplus_{v\in \bbN^{\Gamma_0}} A_{G_v}(\bold{J}_{\Gamma\backslash C, v} \times \bold{M}_{C, v}, \bbQ)$ endowed with the multiplication $m^{\crit}=\overline{i_{2}}_*  \circ \frac{1}{e(\iota)}\omega_{\overline{\omega}}^{\sharp} \circ \overline{i_{1}}_* \circ \overline{p_1}^{*}$.
 
For example, $A$ can be taken to be the intersection theory $\CH$ (that is, the Chow group, see \cite{Ful}). There is a cycle map $\cl: \CH_{*}(X)\to H_{2*}^{\BM}(X)$. It induces an algebra homomorphism
 $\calH(\CH, \Gamma, W, C) \to \calH(\BM, \Gamma, W, C)$. 

As an intermediate object in comparing the preprojective CoHA in \S\ref{preproj CoHA} with the critical CoHA of Kontsevich-Soibelman, we further introduce an {\it auxiliary cohomological Hall algebra}  associated to the data 
$(\Gamma, W, C)$ and $A$,  as
 \[
\calH^{\aux}(\Gamma, W, C):=\bigoplus_{v\in \N^{\Gamma_0}} \calH^{\aux}(\Gamma, W, C)_{v}=
\bigoplus_{v\in \N^{\Gamma_0} } A_{G_v\times T} (\bold{J}_{\Gamma\backslash C, v} \times \bold{M}_{C, v}, \bbQ),
 \]  with multiplication defined by the composition 
 \[
 m^{\aux}_{v_1,v_2}:=\overline{i_{2}}_*  \circ \omega_{\overline{\omega}}^{\sharp} \circ \overline{i_{1}}_* \circ \overline{p_1}^{*}.
 \]
Unlike $m^{\crit}$,  $m^{\aux}_{v_1,v_2}$ does depend on the  specific choices of $\bold{Y}$ in the correspondence \eqref{corresp with fiber}.

\begin{prop}\label{prop:critial COHA}
	The maps $\{m^{\aux}_{v_1,v_2}\}_{v_1, v_2\in \bbN^I}$  define an associative $\bbN^I$-graded algebra structure on $\calH^{\aux}(\Gamma, W, C)$.
\end{prop}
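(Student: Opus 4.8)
The plan is to verify associativity by the standard "both triple products equal a single geometric correspondence" argument, adapted to the auxiliary setting, exactly parallel to the proof that $^A\calP$ is associative in \cite{YZ15}. First I would fix $v_1,v_2,v_3\in\bbN^I$, set $v=v_1+v_2+v_3$, choose a full flag $V_1\subset V_1\oplus V_2\subset V$ of $I$-graded subspaces with the prescribed dimension vectors, and let $P_{12}\supset P_{123}$ be the corresponding parabolics in $G=G_v$ (with $P_{12}$ the stabilizer of $V_1\oplus V_2$, and $P_{123}$ the stabilizer of the full flag), and Levi $L=G_{v_1}\times G_{v_2}\times G_{v_3}$. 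The two ways of bracketing $m^{\aux}$ correspond to factoring this flag variety through $G/P_{12}\times_{P_{12}}(P_{12}/P')$ versus the other partial flag; the key point is that both iterated correspondences are computed on the same space
\[
G\times_{P_{123}}\Big(\eta^{-1}\big(\fR(J_W^C,v)\big)\cap(\text{flag condition})\Big),
\]
together with the "universal" version of the smooth variety $\calX$, namely
\[
\calX_{123}:=\Big\{(c,x_1,x_2,x_3)\in\fp_{v}^{\flat}\times\textstyle\prod_k\Rep(\Gamma,v_k)\ \Big|\ \pr_k(c)=\tfrac{\partial W}{\partial a}(x_k),\ a\in C,\ k=1,2,3\Big\},
\]
where $\fp_v^{\flat}$ is the relevant nilradical-type piece compatible with the flag.

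The key steps, in order, are: (i) record the two towers of correspondences obtained by iterating \eqref{equ:corres of crit coha}, e.g. first multiplying $\calH_{v_1}\otimes\calH_{v_2}$ then the result with $\calH_{v_3}$, and dually; (ii) show each tower can be collapsed into a single span out of $G\times_{P_{123}}\calX_{123}$ using \emph{base change} and \emph{functoriality of refined Gysin pullback} — concretely, the composition $\phi^\sharp$ followed by $\overline\eta_*$ followed again by $\phi^\sharp$ is rewritten, via the Cartesian squares in \eqref{equ:corres of crit coha} (which I would need to exhibit in the three-step version) and the projection/base-change formula $\overline\eta_*\circ\phi^\sharp=\phi'^\sharp\circ\overline\eta'_*$ for a suitable fiber square, into a single $\overline\eta_*\circ\phi^\sharp$ on $\calX_{123}$; (iii) observe that this collapsed span is manifestly symmetric in the two bracketings because the partial-flag bundle $G/P_{123}\to G/P_{12}$ and the other one $G/P_{123}\to G/P_{13}$ both refine to the full flag, and the Künneth/induction isomorphisms (2) are associative in the evident sense; (iv) conclude $m^{\aux}_{v_1+v_2,v_3}\circ(m^{\aux}_{v_1,v_2}\otimes\id)=m^{\aux}_{v_1,v_2+v_3}\circ(\id\otimes m^{\aux}_{v_2,v_3})$ by comparing both with the collapsed span. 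Throughout, the $T$-equivariance is automatic from the standing assumption that the extra $T=\Gm^2$-action makes all correspondences equivariant.

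The main obstacle — and the only part requiring genuine care — is step (ii): verifying that the relevant squares in the three-step diagram are Cartesian (or at least Tor-independent), so that refined Gysin pullback commutes with the pushforwards as needed. One must check that, just as the left square of \eqref{equ:corres of crit coha} is Cartesian because the fiber of $i_1$ along $\phi$ is cut out by the single equation $\frac{\partial W}{\partial a}(x)=0$, the iterated version is also Cartesian: the point is that imposing the Jacobian relations for the middle dimension vector and then for the sum is the same as imposing them simultaneously, which holds because $\frac{\partial W}{\partial a}$ is "block upper-triangular" with respect to the flag (a consequence of Assumption~\ref{Assu}, that $C$ consists of one loop per vertex, so the cyclic derivative of $W$ along $l_i$ has no component landing outside the flag filtration). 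Granting this compatibility, the associativity is a formal consequence of the functoriality of $(-)^\sharp$ and $(-)_*$ in an oriented cohomology theory, which is already used in \cite{YZ15}.
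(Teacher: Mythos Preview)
Your proposal is correct and is essentially the same approach as the paper's: the paper's entire proof is the one line ``The proof goes the same way as the proof in \cite[Theorem~4.1]{YZ15},'' and what you have written is a faithful outline of that argument --- set up the three-step flag correspondence, collapse both iterated products to a single span via base change and functoriality of refined Gysin pullback, and verify the required Cartesian squares. One minor notational slip: the ``other'' partial flag projection should be $G/P_{123}\to G/P_{1}$ (stabilizer of $V_1$), not $G/P_{13}$, but this does not affect the argument.
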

\begin{proof}
	This follows from a similar proof as that of \cite[Theorem~4.1]{YZ15}.
\end{proof}

\section{Proof of Theorem~\ref{prop: crit CoHA with the one in KS}}\label{sec:proof1}
In this section, we prove Theorem \ref{prop: crit CoHA with the one in KS} and hence work under the assumption therein. 
\subsection{Proof of Theorem~\ref{prop: crit CoHA with the one in KS}}
The proof of Theorem~\ref{prop: crit CoHA with the one in KS} amounts to showing that the dimension reduction in Theorem~\ref{thm:DavA1} is compatible with some pullbacks and pushforwards. We present some general facts in Appendix~\ref{App} for the convenience of the readers.

In the setup of Theorem~\ref{thm:DavA1}, we take $X= \bold{M}_{\Gamma \backslash C, v}, Y= \bold{M}_{\Gamma, v}, \mathbb{A}^n= \bold{M}_{C, v} $, and $f=\tr W_v$. Then, we have the subvariety 
$Z=\{x\in  \bold{M}_{\Gamma \backslash C, v}\mid \tr(W_v) (x, l)=0, \forall l\in \bold{M}_{C, v} \}$. 
\begin{lemma}\label{lem:about Z}
The subvariety $Z$ of $\bold{M}_{\Gamma \backslash C, v}$ can be identified with $\bold{J}_{\Gamma\backslash C, v}$. 
\end{lemma}
\begin{proof}
By definition, the potential $W$ is homogenous of degree $1$. 
%say $W=\sum_{i}A_i a_i B_i$, where $a_i\in C$. 
%Then, we have $\partial W/\partial a=\sum_{i: a_i=a} B_i A_i$. 
This implies 
\begin{equation}\label{equ:trW}
\tr W=\tr \sum_{a\in C} (\partial W/\partial a) a.
\end{equation}
For any $x\in \bold{J}_{\Gamma \backslash C, v}$, we have $\partial W/\partial a (x)=0$, for all $a\in C$. Clearly, the equality \eqref{equ:trW} implies $\tr(W_v) (x, l)=\tr \sum_{a\in C} (\partial W/\partial a (x) \cdot l_a)=0$, for all $l=(l_a)_{a\in C} \in \bold{M}_{C, v}$. This shows $x\in Z$. 

On the other hand, for any $x\in Z$, we have $\tr(W_v) (x, l)=0$, for all $l=(l_a)_{a\in C} \in \bold{M}_{C, v}$. Then, by the equality \eqref{equ:trW} we have $\tr((\partial W/\partial a)(x) \cdot  l_a)=0$, for any matrix $l_a$. This shows the vanishing $(\partial W/\partial a)(x)=0$, for all $a\in C$. Therefore, $x\in \bold{J}_{\Gamma \backslash C, v}$.
\end{proof}

Lemma \ref{lem:about Z}, together with Theorem~\ref{thm:DavA1} and Proposition \ref{Ts and Ku}, yields the following.
\begin{corollary}
\label{cor:vanish cyc and usual}
There is a canonical isomorphism as vector spaces,
\begin{align*}
H_{c, G_v}^*(\bold{M}_{\Gamma, v}, \varphi_{\tr W_v})^{\vee}&
\cong H_{c, G_v}^*(\bold{J}_{\Gamma\backslash C, v}\times \bold{M}_{C, v}
, \Q)^{\vee},  \,\ \text{for $v\in \bbN^{\Gamma_0}$},
\end{align*} which is compatible with the 
 Thom-Sebastiani isomorphism and the K\"unneth isomorphism.
\end{corollary}

The following lemma will be proved in \S~\ref{subsec:proof of lemma}.

\begin{lemma}\label{lem:app_euler_phi_van}
The isomorphism in Theorem~\ref{thm:DavA1} intertwines 
\[e(\iota) (j_{2}\times\id_{\bold{M}_{C,v}})_*:H_{c, L}^*(\bold{M}_{\Gamma\backslash C, v_1}\times \bold{M}_{\Gamma\backslash C, v_2} \times \bold{M}_{C, v} , 
\varphi_{\tr W_{v_1+ v_2} } )^{\vee}\to H_{c, L}^*( \bold{M}_{\Gamma\backslash C, v_1, v_2}\times \bold{M}_{C, v} , 
\varphi_{\tr W_{v_1+v_2} } )^{\vee}\] 
and \[
\xymatrix@C=5em @R=1em{
 H^{\BM}_{L}(\bold{J}_{\Gamma\backslash C, v_1}\times \bold{J}_{\Gamma\backslash C, v_2}\times \bold{M}_{C, v})\ar[r]\ar[d]_{\cong} &H^{\BM}_{L}(\bold{J}_{\Gamma\backslash C, v_1, v_2}\times \bold{M}_{C, v})\ar[d]_{\cong}\\
H^{\BM}_{L}(\bold{J}_{\Gamma\backslash C, v_1}\times \bold{J}_{\Gamma\backslash C, v_2})[-\dim \bold M_{C,v}] \ar[r]|-{\omega^{\sharp}_{\overline{\omega}} e(j_{2})} &H^{\BM}_{L}(\bold{J}_{\Gamma\backslash C, v_1, v_2})[-\dim \bold M_{C,v}] .
}\]
\end{lemma}

\begin{proof}[Proof of Theorem~\ref{prop: crit CoHA with the one in KS}]
With notations in \eqref{diag:j23} and \eqref{eqn:iota}, the composition of \S\ref{subsec:app_KS_crit_coh}(2) and (3) in the multiplication on $\calH^{\crit}(\Gamma, W)$ of Kontsevich-Soibelman is equivalent to 
\[
i_{2*}\circ i_{3*}\circ p_{3}^*\circ p_1^*:H^*_{c,L}
(\bold{M}_{\Gamma, v_1}
\times \bold{M}_{\Gamma, v_2},\varphi_{\tr W_{v_1}\boxplus W_{v_2}})^\vee\to 
H^*_{c,L}(\bold{M}_{\Gamma, v_1+v_2},\varphi_{\tr W_{v_1+v_2}})^\vee.\]
In the following 4 steps, we show that $i_{2*}\circ i_{3*}\circ p_{3}^*\circ p_1^*$ is the same as $\overline{i_2}_* \circ \frac{1}{e(\iota)}\omega_{\overline{\omega}}^{\sharp} \circ \overline{i_1}_* \circ \overline{p_1}^*$ under the isomorphism of Corollary~\ref{cor:vanish cyc and usual}.

{\bf Step 1. }We have the following commutative diagram
\[
\xymatrix@R=1.5em@C=1em {
H^*_{c,L}
(\bold{M}_{\Gamma, v_1}
\times \bold{M}_{\Gamma, v_2},\varphi_{\tr W_{v_1}\boxplus W_{v_2}})^\vee\ar[r]^(0.45){{p_1}^*}\ar[d]^{\cong}&
H^*_{c,L}
(\bold{M}_{\Gamma\backslash C, v_1}
\times \bold{M}_{\Gamma\backslash C, v_2}\times \bold{M}_{C, v_1, v_2},\varphi_{\tr W_{v_1}\boxplus W_{v_2}})^\vee \ar[d]^{\cong}\\
H_{c,L}^*(\bold{J}_{\Gamma\backslash C, v_1}
\times \bold{J}_{\Gamma\backslash C, v_2}\times \bold{M}_{C, v_1}
\times \bold{M}_{C, v_2})^\vee\ar[r]^{\overline{p_1}^*}&
H_{c,L}^*(\bold{J}_{\Gamma\backslash C, v_1}
\times \bold{J}_{\Gamma\backslash C, v_2}\times \bold{M}_{C, v_1, v_2})^\vee
}\] Indeed, Lemma \ref{lem:pullback}, we take $X'=X=\bold{M}_{\Gamma\backslash C, v_1} \times \bold{M}_{\Gamma\backslash C, v_2}$, $g$ to be the identity map on $X$, and 
$h: \bold{M}_{C, v_1, v_2} \to \bold{M}_{C, v_1}\times \bold{M}_{C, v_2}$ 
the natural projection. Note that the map $p_1=g\times h$
is an affine bundle. Then Lemma~\ref{lem:pullback} implies that the vanishing cycle pullback $p_1^*$ is the same as $\overline{p_1}^*$. 

{\bf Step 2. }The map $p_3: \bold{M}_{\Gamma, v_1, v_2} \to 
 \bold{M}_{\Gamma\backslash C, v_1}\times  \bold{M}_{\Gamma\backslash C, v_2}
\times  \bold{M}_{C, v_1, v_2}$
is also an affine bundle, with $\tr W_{v_1, v_2}$ on the total space $
\bold{M}_{\Gamma, v_1, v_2}
$ obtained by pulling back of $\tr W_{v_1} \boxplus \tr W_{v_2}$ from 
$\bold{M}_{\Gamma, v_1}\times \bold{M}_{\Gamma, v_2}$. Hence, 
$j_{3*}: 
H^*_{c,L}
(\bold{M}_{\Gamma\backslash C, v_1}
\times \bold{M}_{\Gamma\backslash C, v_2}\times \bold{M}_{C, v_1, v_2}
,\varphi_{\tr W_{v_1}\boxplus W_{v_2}})^\vee
\to
H^*_{c,L}
(\bold{M}_{\Gamma\backslash C, v_1, v_2}\times \bold{M}_{C, v_1, v_2},\varphi_{\tr W_{v_1, v_2}})^\vee
$ is $p_3^*$ followed by the Euler class of $j_3$, that is, $ p_3^*=\frac{1}{e(j_3)} j_{3*}$. 
Plugging this into the equality $(i_3j_3)_*=((j_{2}\times\id_{\bold{M}_{C,v}})i_1)_*$, we get that $i_{3*} \circ p_3^*$ is the same as $\frac{1}{e(j_3)}\circ ((j_{2}\times\id_{\bold{M}_{C,v}})i_1)_*$.

{\bf Step 3. }The map $i_1$ is a section of an affine bundle. The function $\tr W$ on $\bold{M}_{\Gamma\backslash C, v_1}\times  \bold{M}_{\Gamma\backslash C, v_2} \times  \bold{M}_{C, v} $ is obtained by pulling back  $\tr W$ from $\bold{M}_{\Gamma\backslash C, v_1}\times  \bold{M}_{\Gamma\backslash C, v_2} \times  \bold{M}_{C, v_1, v_2}$.
Hence, we have the commutative diagram
\[
\xymatrix@R=1.5em @C=1em{
H^*_{c,L}
(\bold{M}_{\Gamma\backslash C, v_1}
\times \bold{M}_{\Gamma\backslash C, v_2}\times \bold{M}_{C, v_1, v_2}
,\varphi_{\tr W_{v_1}\boxplus W_{v_2}})^\vee
\ar[d]^{i_{1*}} \ar[r]^(.6){\cong}&
H^{\BM}_L( \bold{J}_{\Gamma\backslash C, v_1}
\times \bold{J}_{\Gamma\backslash C, v_2}\times  \bold{M}_{C, v_1, v_2})
\ar[d]^{\overline{i_1}_*}\\
H^*_{c,L}
(\bold{M}_{\Gamma\backslash C, v_1}
\times \bold{M}_{\Gamma\backslash C, v_2}\times \bold{M}_{C, v}
,\varphi_{\tr W_{v_1}\boxplus W_{v_2}})^\vee  \ar[r]^(.6){\cong} &H^{\BM}_{L}
(\bold{J}_{\Gamma\backslash C, v_1}
\times \bold{J}_{\Gamma\backslash C, v_2}\times \bold{M}_{C, v})
}
\]
since both $i_{1*}$ and  $\overline{i_1}_*$ are multiplication by the Euler class. 

Precomposing $i_{1*}=\overline{i_1}_*$ with equality from Lemma~\ref{lem:app_euler_phi_van}, we have $ e(\iota) ( j_{2}\times\id_{\bold{M}_{C,v}})_*i_{1*} =\omega^{\sharp}_{\overline{\omega}} e(j_{2}) \overline{i_1}_*$. 
This implies the equality 
$\frac{1}{e(\iota)}\omega_{\overline{\omega}}^{\sharp} \circ \overline{i_1}_*=\frac{1}{e(j_3)}\circ ((j_{2}\times\id_{\bold{M}_{C,v}})i_1)_*$, with the right hand side equal to  $i_{3*}p_3^*$ by Step 2. 

{\bf Step 4. }In Lemma \ref{lem:pushforward}, we take $X$ to be $\bold{M}_{\Gamma\backslash C, v_1, v_2}$, and $X'$ to be $\bold{M}_{\Gamma\backslash C, v_1+v_2}$, $h: \bold{M}_{C, v} \to \bold{M}_{C, v}$ the identity map. Then, 
$i_{2}=g\times h$. By Lemma \ref{lem:pushforward}, the Borel-Moore homology pushforward 
\[\overline{i_2}_*: H_{c,L}(\bold{J}_{\Gamma\backslash C, v_1, v_2}\times \bold{M}_{C, v}
)^\vee
\to H_{c,L}( \bold{J}_{\Gamma \backslash C, v_1+v_2}\times \bold{M}_{C, v})^\vee\] coincides with the vanishing cycle pushforward
\[
i_{2*}:H_{c, L}^*(\bold{M}_{\Gamma\backslash C, v_1, v_2}\times \bold{M}_{C, v}, 
\varphi_{\tr W_{v_1+ v_2} } )^{\vee}\to 
H_{c, L}^*(\bold{M}_{\Gamma, v_1+v_2}, 
\varphi_{\tr W_{v_1+v_2} } )^{\vee}.\]
\end{proof}

\subsection{Proof of Lemma~\ref{lem:app_euler_phi_van}}\label{subsec:proof of lemma}
Recall in \eqref{corresp with fiber}, we have the map 
$\omega:  \bold{M}_{\Gamma\backslash C, v_1, v_2}\to \bold{Y}$, given by $ x\mapsto ( ({\partial W}/{\partial a})_{a\in C} (x), \pr(x))$, with 
$\pr: \bold{M}_{\Gamma\backslash C, v_1, v_2} \to \bold{M}_{\Gamma\backslash C, v_1}\times \bold{M}_{\Gamma\backslash C, v_2}$ being the natural projection. We introduce a variety 
\[\bold{X}:=\{(l, x)\in \bold{M}_{C, v_1, v_2}\times 
\bold{M}_{\Gamma \backslash C,  v_1, v_2}
\mid
({\partial W}/{\partial a})_{a\in C} (\pr(x) )=\pr(l)\},\] where $\pr(l)$ is the image of $l$ under the projection $\pr: \bold{M}_{C, v_1, v_2} \to \bold{M}_{C, v_1}\times \bold{M}_{C, v_2}$.  
Now $\omega$ is equal to the composition of the following two maps
\[
a: \bold{M}_{\Gamma\backslash C, v_1, v_2} \to \bold{X}, x\mapsto (({\partial W}/{\partial a})_{a\in C} (x),  x), \text{and} \,\ 
 b: \bold{X} \to \bold{Y}, (l, x)\mapsto (l, \pr(x)),
\]
with $a$ being a closed embedding and $b$ being an affine bundle.
Using these notations,  we have the following diagram with $\omega$, $\overline{\omega}$, and $\iota$ the same as in diagram \eqref{corresp with fiber}.
\begin{equation}\label{eqn:iota}
\xymatrix@R=1em {
&\bold{X}   \ar[ld]^{b} &\\
\bold{Y}\ar@/^/@{.>}[ru]^{\tilde{j_2}} & & \bold{M}_{\Gamma\backslash C, v_1, v_2}\ar[lu]_{a} \ar[ll]_{\omega}\\
\bold{M}_{\Gamma\backslash C, v_1}\times \bold{M}_{\Gamma\backslash C, v_2}
 \ar@{^{(}->}[u]^{\iota}
\ar@{^{(}->}[urr]_(0.6){j_2}&  & \\
\bold{J}_{\Gamma\backslash C, v_1}\times \bold{J}_{\Gamma\backslash C, v_2}
\ar@{^{(}->}[u]
\ar@/^5.0pc/[uu]^{l}
&  & \bold{J}_{\Gamma\backslash C, v_1, v_2}\ar[ll]_(0.3){\overline{\omega}}\ar@{^{(}->}[uu]^{i}
}
\end{equation}
Here
$l: \bold{J}_{\Gamma\backslash C, v_1}\times \bold{J}_{\Gamma\backslash C, v_2}
\inj \bold{Y}$ is the embedding, and $\tilde{j_2}$ is the zero-section of the affine bundle $b$. 

\begin{lemma}\label{lem:app_euler}
Notations as above, under the isomorphism in Theorem~\ref{thm:DavA1}, the morphism
\[
(j_{2}\times\id_{\bold{M}_{C,v}})_*: H_{c, L}^*(\bold{M}_{\Gamma\backslash C, v_1}\times \bold{M}_{\Gamma\backslash C, v_2} \times \bold{M}_{C, v} , 
\varphi_{\tr W_{v_1+ v_2} } )^{\vee}\to 
H_{c, L}^*( \bold{M}_{\Gamma\backslash C, v_1, v_2}\times \bold{M}_{C, v} , 
\varphi_{\tr W_{v_1+v_2} } )^{\vee}\] is given by
$\mathbb{D} \circ (p_{\bold{J}_{v_1}\times \bold{J}_{ v_2}})_{!} l^* b_{!} a_! 
(\Q_{\bold{M}_{\Gamma\backslash C, v_1, v_2}}\to j_{2*}\Q_{\bold{M}_{\Gamma\backslash C, v_1}\times \bold{M}_{\Gamma\backslash C, v_2}})[-\dim \bold{M}_{C,v}]$. \footnote{Here we extend the constructions equivariently in the same way as in \cite{D}. That is, assume $G$ embeds into $\GL_{\mathbb{C}}(n)$, for some $n\in \mathbb{N}$. 
Let $EG_N$ be the space of $n$-tuples of linearly independent vectors in $\mathbb{C}^N$, which is denoted by $\hbox{fr}(n, N)$ in \cite{D}.
Thus, $EG_N/\GL_{\mathbb{C}}(n)$ is the Grassmannian of linear subspaces of dimension $n$ in $\mathbb{C}^N$.
Let $X_N:=EG_N\times_GX$ with natural maps $\{h_N :X_N\to X_{N+1}\}$. In what follows, $\bbQ_X$ should be understood as $\{\bbQ_{X_N}\}$ together with the natural maps $\{(h_{N})_{!}\bbQ_{X_N}\to \bbQ_{X_N+1}\}$. For an equivariant map $f:X\to Y$ between smooth varieties, we have the system of maps 
$\{(f_N)_!\bbQ_{X_N}\to \bbQ_{Y_N}[-\dim f]\}$ and $\{\bbQ_{Y_N}\to (f_N)_*\bbQ_{X_N} \}$,  together with commutative squares 
$$\xymatrix@C=1.5em @R=1.5em{(h_N)_!f_{N*}\bbQ_{X_N}\ar[r]&(f_{N+1})_* \bbQ_{X_{N+1}}\\
		(h_N)!\bbQ_{Y_N}\ar[r]\ar[u]&\bbQ_{Y_{N+1}}\ar[u]
	}, $$ and similarly for the $!$-version. We omit the system from the notations. For example, in the proof of Lemma \ref{lem:app_euler}, the morphism in each line should be understood as a system of morphisms. The equalities in the proof come from natural isomorphism of functors, hence give rise to equalities of commutative squares.}
\end{lemma}
\begin{proof}
Let $\pi: \bold{M}_{\Gamma\backslash C, v_1, v_2}\times  \bold{M}_{C, v} \to \bold{M}_{\Gamma\backslash C, v_1, v_2}$ be the projection. 
By Theorem~\ref{thm:DavA1},  $(j_{2}\times\id_{\bold{M}_{C,v}})_*$ is
\begin{align*}
&\mathbb{D} \circ (p_{\bold{M}_{\Gamma\backslash C, v_1, v_2}})_{!} 
\pi_{!} \pi^* i_{!} i^*(\Q_{\bold{M}_{\Gamma\backslash C, v_1, v_2}}\to j_{2*}\Q_{\bold{M}_{\Gamma\backslash C, v_1}\times \bold{M}_{\Gamma\backslash C,  v_2}})\\
=&\mathbb{D} \circ (p_{\bold{M}_{\Gamma\backslash C, v_1, v_2}})_{!}  i_{!} i^*(\Q_{\bold{M}_{\Gamma\backslash C, v_1, v_2}}\to j_{2*}\Q_{\bold{M}_{\Gamma\backslash C, v_1}\times \bold{M}_{\Gamma\backslash C,  v_2}})[-\dim \bold{M}_{C,v}]\\
=&\mathbb{D} \circ (p_{\bold{J}_{v_1}\times \bold{J}_{ v_2}})_{!}  \overline{\omega}_{!} i^*(\Q_{\bold{M}_{\Gamma\backslash C, v_1, v_2}}\to j_{2*}\Q_{\bold{M}_{\Gamma\backslash C, v_1}\times \bold{M}_{\Gamma\backslash C,  v_2}})[-\dim \bold{M}_{C,v}]\\
=&\mathbb{D} \circ (p_{\bold{J}_{v_1}\times \bold{J}_{ v_2}})_{!}  l^* \omega_{!} (\Q_{\bold{M}_{\Gamma\backslash C, v_1, v_2}}\to j_{2*}\Q_{\bold{M}_{\Gamma\backslash C, v_1}\times \bold{M}_{\Gamma\backslash C,  v_2}})[-\dim \bold{M}_{C,v}]\\
=&\mathbb{D} \circ (p_{\bold{J}_{v_1}\times \bold{J}_{ v_2}})_{!}  l^* b_{!}a_{!} (\Q_{\bold{M}_{\Gamma\backslash C, v_1, v_2}}\to j_{2*}\Q_{\bold{M}_{\Gamma\backslash C, v_1}\times \bold{M}_{\Gamma\backslash C,  v_2}})[-\dim \bold{M}_{C,v}].
\end{align*}
Here the first isomorphism follows from projection formula (see for example \cite[Theorem 2.3.29]{Di}) and the fact that $\pi_!\bbQ=\bbQ[-\dim \bold{M}_{C,v}]$. 
\end{proof}

\begin{lemma}\label{lem:Gysin pullback of phi}
The Gysin pullback $\omega^{\sharp}_{\overline{\omega}}: 
H^{\BM}_{L}( \bold{J}_{\Gamma\backslash C, v_1}\times \bold{J}_{\Gamma\backslash C, v_2}  )
\to H^{\BM}_{L}(\bold{J}_{\Gamma\backslash C, v_1, v_2})$ is given by 
$\bbD\circ(p_{\bold{J}_{v_1}\times \bold{J}_{v_2}})_! \circ l^*$ applied to the following composition of morphisms
\[
b_!a_! \Q_{\bold{M}_{\Gamma\backslash C, v_1, v_2}} \to b_! \Q_{\bold{X}}[-\dim a] \cong \Q_{\bold{Y}}[-\dim a-\dim b]. 
\]
\end{lemma}
\begin{proof}
Let $\bold{X}' \subset \bold{X}$ be the inverse image of $\bold{J}_{\Gamma\backslash C, v_1}\times \bold{J}_{\Gamma\backslash C, v_2}$ under the map $b$. Then, we have the following diagram with $l'$ induced by $l$.
\begin{equation}\label{eqn:diag_phi}
\xymatrix@R=1em{
\bold{Y} & \bold{X}  \ar[l]_{b}& \bold{M}_{\Gamma\backslash C, v_1, v_2}
 \ar[l]_(0.6){a} \\
\bold{J}_{\Gamma\backslash C, v_1}\times \bold{J}_{\Gamma\backslash C, v_2}  \ar@{^{(}->}[u]^{l}& \bold{X}' \ar[l]_(0.3){b'}\ar@{^{(}->}[u]^{l'}
& \bold{J}_{\Gamma\backslash C, v_1, v_2}\ar[l] \ar@{^{(}->}[u]
\ar@/^1.0pc/[ll]|-{\overline{\omega}}
}
\end{equation}

It is well-known (see, e.g., \cite[Lemma 2.1.2]{KSa}) that  
 the map $a^{\sharp}$ is given by $(p_{\bold{X}'})_{*}l'^{!}(\Q_{\bold{X}}\to a_*\Q_{\bold{M}_{\Gamma\backslash C, v_1, v_2}} )$. 
We have the following equalities
\begin{align*}
(p_{\bold{X}'})_{*}l'^{!}(\Q_{\bold{X}}\to a_*\Q_{\bold{M}_{\Gamma\backslash C, v_1, v_2}} )
=&(p_{\bold{J}_{v_1}\times \bold{J}_{v_2}})_{*} b'_{*} l'^{!}(\Q_{\bold{X}}\to a_*\Q_{\bold{M}_{\Gamma\backslash C, v_1, v_2}} )\\
=& (p_{\bold{J}_{v_1}\times \bold{J}_{v_2}})_{*}  l^{!}b_{*}(\Q_{\bold{X}}\to a_*\Q_{\bold{M}_{\Gamma\backslash C, v_1, v_2}} )\\
=& \mathbb{D}\circ(p_{\bold{J}_{v_1}\times \bold{J}_{v_2}})_{!}  l^{*}b_{!}(a_!\Q_{\bold{M}_{\Gamma\backslash C, v_1, v_2}}\to \Q_{\bold{X}}[-\dim a] )
\end{align*}

The map $b$ is an affine bundle. Hence, $b^\sharp$ is induced by $b_!\bbQ_{\bold{X}}\cong\bbQ_{\bold{Y}}[-\dim b]$. The assertion now follows from $\omega^{\sharp}_{\overline{\omega}} =a^\sharp\circ b^\sharp$.
\end{proof}

Notations as in \eqref{diag:j23} and \eqref{eqn:iota}, $e(a)$ and $e(j_{2})$ are the Euler classes of the two embeddings $a$ and $j_2$. Note that $e(j_2)=e(j_3)$ and $e(a)=e(\iota)$. Combining Lemmas~\ref{lem:app_euler} and \ref{lem:Gysin pullback of phi}, we get Lemma~\ref{lem:app_euler_phi_van}.

\begin{proof}[Proof of Lemma~\ref{lem:app_euler_phi_van}]
By Lemma \ref{lem:app_euler}, $e(a) (j_{2}\times\id_{\bold{M}_{C,v}})_*$ is the same as applying $\mathbb{D} \circ (p_{\bold{J}_{v_1}\times \bold{J}_{ v_2}})_{!}  l^*$ to the following composition 
\begin{align*}
 b_{!}
 \Big( &a_{!} \Q_{\bold{M}_{\Gamma\backslash C, v_1, v_2}} \to \Q_{\bold{X}}[-\dim a] \to a_{*} \Q_{\bold{M}_{\Gamma\backslash C, v_1, v_2}}[-\dim a]
=a_! \Q_{\bold{M}_{\Gamma\backslash C, v_1, v_2}}[-\dim a]\\& \to a_{!}j_{2*} \Q_{ \bold{M}_{\Gamma\backslash C, v_1}\times \bold{M}_{\Gamma\backslash C, v_2}}[-\dim a] \Big).
\end{align*}
By Lemma \ref{lem:Gysin pullback of phi}, $\omega^{\sharp}_{\overline{\omega}} e(j_{2})$ is $\mathbb{D} \circ (p_{\bold{J}_{v_1}\times \bold{J}_{ v_2}})_{!}  l^*$ applied to the composition 
\[
b_! a_! \Q_{\bold{M}_{\Gamma\backslash C, v_1, v_2}} \to \Q_{\bold{Y}}[-\dim a-\dim b]=b_! \tilde{j}_{2!} \Q_{\bold{Y}}[-\dim a-\dim b] \to 
b_! \Q_{\bold{X}}[-\dim a] \to b_! \tilde{j}_{2*}\Q_{\bold{Y}} [-\dim a].
\]
We have the commutative diagram
\[\xymatrix@R=1em @C=1em{
a_! \Q_{\bold{M}_{\Gamma\backslash C, v_1, v_2}}[\dim a]\ar[r]&\Q_{\bold{X}}\ar[r]& a_{*}\Q_{\bold{M}_{\Gamma\backslash C, v_1, v_2}}\ar[r]& a_! j_{2!}\Q_{\bold{M}_{\Gamma\backslash C, v_1}\times \bold{M}_{\Gamma\backslash C, v_2}}\\
a_! \Q_{\bold{M}_{\Gamma\backslash C, v_1, v_2}}[\dim a]\ar[r]\ar@{=}[u]&\tilde{j}_{2!} \Q_{\bold{Y}}[-\dim b]\ar[u]\ar[r] &\Q_{\bold{X}}\ar[r]\ar[u] & \tilde{j}_{2!}\Q_{\bold{Y}}\ar[u].
}\]
Applying the functor $(p_{\bold{J}_{v_1}\times \bold{J}_{ v_2}})_{!}  l^*b_{!}[-\dim a]$ to the diagram above  gives the desired equality. 
\end{proof}

\section{The preprojective cohomological Hall algebra}
\label{preproj CoHA}
This section is a brief review of the main results about the preprojective CoHA from \cite{YZ15}. All the details can be found in {\it loc. cit.}.
\subsection{The preprojective CoHA}\label{subsec:HallMulti}
Let $Q=(I, H)$ be a quiver, where $I$ is the set of vertices, and $H$ the set of arrows. 
Let $\overline{Q}$ be the double quiver, and $\Pi_Q$ be the preprojective algebra of $Q$. 
By definition, the vertices of $\overline{Q}$ are the same as the vertices of $Q$, and the arrows of $\overline{Q}$ are the arrows of $Q$ together with arrows $h^*: j\to i$ for each arrow $h: i\to j$ of $Q$. The preprojective algebra $\Pi_Q$ is defined as the quotient of the path algebra of $\overline{Q}$ by the relation $\sum_{h\in H}[h, h^*]$. 

We have the following moment map on $T^*\bold{M}_{Q,v}=\bold{M}_{\overline{Q},v}$. 
\[\mu_{v}: T^*\bold{M}_{Q,v}\to \fg^*_{v}, \,\ (x, x^{*})\mapsto [x, x^{*}]. \] 

Denote by $\bold{\Lambda}_{v}$ the preimage of $0$ under the map $\mu_{v}$. 
The subvariety $\bold{\Lambda}_{v} \subset \bold{M}_{\overline{Q}, v}$ parametrizes the representations of $\Pi_Q$ of dimension vector $v$.  

We fix a weight function $m:H\coprod H^*\to\bbZ$, and define the
 torus $T=\Gm^2$ action on $T^*\bold{M}_{Q, v}$ 
 as follows (see also  \cite[(2.7.1) and (2.7.2)]{Nak99}).
For $h\in H$ from $i$ to $j$, the element  $(t_1,t_2)\in T$ acts on the factor  $\Hom(V^i, V^j)$ by $t_1^{m_h}$, and on the factor  $\Hom(V^j, V^i)$ by $t_2^{m_{h^*}}$.
We let $T$ act on the Lie algebra $\fg_v^*$ of $G_v$ by weight $t_1t_2$.
It induces a $T$-action on $\bold{\Lambda}_{v}$ if the moment map is $T$-equivariant, or equivalently, if we have the following (\cite[Assumption~3.1]{YZ15}).
\begin{assumption} \label{Assu:WeghtsGeneral}
We assume the weight functions $m$ and a subtorus $D\subseteq T$  are compatible, in the sense that $t_1^{m_h}t_2^{m_{h^*}}=t_1t_2$ on $D$, for all $h$. 
\end{assumption}
One example of  $T$ action satisfying Assumption~\ref{Assu:WeghtsGeneral} is the following. 
\begin{example}\label{ex:two t and hbar}
Let $D=\Gm$ with coordinate $\hbar$, sitting in $T$ via $t_1=t_2=\hbar/2$. For any pair of vertices $i$ and $j$, label the arrows from $i$ to $j$ by $h_1, \dots, h_a$.  The pairs of integers are $m_{h_p}=a+2-2p$ and $m_{h_p^*}=-a+2p$. 
\end{example}

Let $A$ be an oriented cohomology theory with $A(\pt)=R$. We consider the $\bbN^I$-graded $R[\![t_1, t_2]\!]$-module 
 \[
\calP(A, Q):=\bigoplus_{v}\calP_v(A, Q)=\bigoplus_{v} A_{G_v\times T}(\bold{\Lambda}_{v}).
\]
For each pair $v_1,v_2\in\bbN^I$, the multiplication map $m_{v_1, v_2}^{\prepr}:\calP_{v_1}\otimes_{\mathbb{Q}[\![t_1t_2]\!]}\calP_{v_2}\to \calP_{v_1+v_2}$ is as follows. 

Recall that $\bold{M}_{Q, v_1, v_2}=\{x\in \bold{M}_{Q, v_1+v_2}\mid x(V_1)\subset V_1\}$. We have the following correspondence of $G\times T$-varieties
\begin{equation}\label{eq:corr local}
G\times_P(\bold{M}_{Q, v_1}\times \bold{M}_{Q, v_2}) \leftarrow 
G\times_P \bold{M}_{Q, v_1, v_2} \rightarrow 
\bold{M}_{Q, v}. 
\end{equation}
Let $\bold{\Lambda}_{v_1, v_2}:=\bold{\Lambda}_{v}\cap \bold{M}_{\overline{Q}, v_1, v_2}
= \{(x, x^*)\in \bold{M}_{\overline{Q}, v_1, v_2} \mid [x, x^*]=0\}$, and 
$X=G\times_P(\bold{M}_{{Q}, v_1}\times \bold{M}_{{Q}, v_2})$. By \cite[Lemma 5.1]{YZ15}, we have
\[
T^*X=G\times_P \{ (c, x, x^*) \in \fp_{v_1, v_2}\times (\bold{M}_{{Q}, v_1}\times \bold{M}_{{Q}, v_2})\times 
(\bold{M}_{{Q^{op}}, v_1}\times \bold{M}_{Q^{op}, v_2})
\mid 
[x, x^*]=\pr(c)
  \}, 
\]
where $\pr: \fp_{v_1, v_2}\to \fg_{v_1}\times \fg_{v_2}$ is the natural projection.

By the standard formalism of Lagrangian  correspondences (see, e.g.,  \cite[\S~7]{SV2} and \cite[\S~1.4]{YZ15}), the correspondence \eqref{eq:corr local} induces the 
following commutative diagram of $G\times T$-varieties.
\begin{equation}\label{diag:Lag}
\xymatrix@R=1.5em{
 G\times_P(\bold{\Lambda}_{v_1}\times \bold{\Lambda}_{v_2})
\ar@{^{(}->}[d]
&&
G\times_P \bold{\Lambda}_{v_1, v_2}
\ar[ll]_{\overline\phi}\ar[r]^(0.6){\overline\psi}\ar@{^{(}->}[d]&
\bold{\Lambda}_{v}\ar@{^{(}->}[d]\\
G\times_P(\bold{M}_{\overline{Q}, v_1}\times \bold{M}_{\overline{Q}, v_2})
\ar@{^{(}->}[r]&T^*X&
G\times_P\bold{M}_{\overline{Q}, v_1, v_2}
\ar[l]_(0.6){\phi}\ar[r]^(0.6){\psi}&\bold{M}_{\overline{Q}, v}
}
\end{equation}
Note that the left square of \eqref{diag:Lag}  is a Cartesian square.

The  map $m_{v_1, v_2}^{\prepr}$ is defined to be the composition of the following morphisms. 
\begin{enumerate}
\item The K\"unneth morphism $
 \calP_{v_1}\otimes_{R[\![t_1,t_2]\!]}\calP_{v_2}\to 
A_{L\times T}(\bold{\Lambda}_{v_1}\times \bold{\Lambda}_{v_2}). 
$
\item The natural  isomorphism
$
A_{L \times T}(\bold{\Lambda}_{v_1}\times \bold{\Lambda}_{v_2})\cong A_{G \times T}\big(G\times_P(\bold{\Lambda}_{v_1}\times \bold{\Lambda}_{v_2})\big).$
\item The following composition from diagram \eqref{diag:Lag}, with $\phi^\sharp_{\overline{\phi}}$ being the Gysin pullback of $\phi$
\[
\xymatrix{
A_{G \times T}\big(G\times_P(\bold{\Lambda}_{v_1}\times \bold{\Lambda}_{v_2})\big) 
\ar[r]^(0.6){\phi^{\sharp}_{\overline{\phi}}} &A_{G\times T}(G\times_P \bold{\Lambda}_{v_1, v_2}) 
\ar[r]^(0.55){\overline\psi_*} &A_{G\times T}(\bold{\Lambda}_{v}) \cong \calP_{v}.
} \]
\end{enumerate}

\subsection{Representations from quiver varieties}
\label{subsec:PreproRepn}
Let $Q^{\heartsuit}$ be the framed quiver. Recall that the set of vertices of $Q^{\heartsuit}$ is $I \sqcup I'$, with a bijection $I\to I'$ sending $i$ to $i'$. The set of edges of $Q^\heartsuit$ is, by definition, a disjoint union of $H$ and a set of additional edges $j_i : i \to i'$, one for each vertex $i\in I$, $i'\in I'$. Let $\overline{Q^{\heartsuit}}$ be the double of $Q^{\heartsuit}$.

Let $w\in \N^I$ be the framing of $Q^{\heartsuit}$. Let $V, W$ be two $I$-tuples of vector spaces with dimension vectors $v, w\in \bbN^I$. Let $\bold{M}_{\overline{Q^\heartsuit},(v, w)}$ be the representation space of the double framed quiver $\overline{Q^\heartsuit}$. 
We have the isomorphism
\[
\bold{M}_{\overline{Q^{\heartsuit}}, v, w} 
\cong \bold{M}_{\overline{Q}, v} \oplus \Hom_{Q}(V, W) \oplus \Hom_{Q}(W, V).
\]
For a fixed $I$-tuple of subvector spaces $V_1\subset V$, such that $\dim(V_1)=v_1$, recall that we have
$\bold{M}_{\overline{Q}, (v_1, v_2)}=\{b\in \bold{M}_{\overline{Q}, v_1+v_2}\mid b(V_1)\subset V_1\}$. 
Define
\[
\bold{M}_{\overline{Q^{\heartsuit}}, (v_1, v_2, w)}:= \{(b, i, j) \mid b \in  
\bold{M}_{\overline{Q}, (v_1, v_2)}, i\in \Hom(W, V), 
j\in \Hom(V, W),   \Image(i)\subset V_1\}.
\]
It gives the following correspondence of $G_v\times G_w\times T$-varieties
\begin{equation}\label{equ:corresp 2}
G_v\times_P( \bold{M}_{\overline{Q^\heartsuit},(v_1, w)}\times \bold{M}_{\overline{Q},v_2} 
)\leftarrow 
 G_v\times_P\bold{M}_{\overline{Q^\heartsuit}, (v_1, v_2, w)}\rightarrow 
\bold{M}_{\overline{Q}^\heartsuit,(v, w)}.
\end{equation}
Let $\mu_{v, w}: \bold{M}_{\overline{Q^\heartsuit},(v, w)}\to \fg_{v}^*$ be the moment map for the $G_v$-action. 
Denote by $\bold{\Lambda}_{v, w}$ the preimage of $0$ under $\mu_{v, w}$, and let $\bold{\Lambda}_{v_1, v_2, w}:=\bold{\Lambda}_{v_1+v_2, w}\cap \bold{M}_{\overline{Q^{\heartsuit}}, (v_1, v_2, w)}$ be the intersection. 

Consider the stability condition $\theta^+=(1, 1, \cdots, 1)$ induced by the character 
$G_v\to \Gm: (g_i)_{i\in I} \mapsto \prod_{i\in I} \det(g_i)^{-1}$. 
Let $\bold{M}_{\overline{Q^\heartsuit},(v, w)}^{ss}$ be the set of semistable points of $\bold{M}_{\overline{Q^\heartsuit},(v, w)}$ under the $G_v$-action. The point $(x, x^*, i, j)\in \bold{\Lambda}_{v, w}$ is $\theta^+$--semistable (\cite[Corollary 5.1.9]{G}), if and only if, the following holds:
For any collection of vector subspaces $S = (S_i)_{i\in k}\subset
V = (V_i)_{i\in k}$,  which is stable under the maps
$x$ and $x^*$, if
$S_k\subset \ker(j_k)$ for any $k\in I$, then $S=0$. Note that this choice of stability condition is essential in constructing a right action of $\calP$ (see \cite[Remark~5.5]{YZ15} for details).

Consider the variety $X=G\times_P(\bold{M}_{Q^{\heartsuit},(v_1, w)}
\times \bold{M}_{Q, v_2})$. 
There is a bundle projection $T^*X\to G\times_P(\bold{M}_{\overline{Q^{\heartsuit}},(v_1, w)}
\times \bold{M}_{\overline{Q}, v_2})$. We define $T^*X^s$ to be the preimage of $G\times_P(\bold{M}_{\overline{Q^{\heartsuit}},(v_1, w)}^{ss}\times \bold{M}_{\overline{Q}, v_2})$ under this bundle projection. By \cite[Lemma 5.1]{YZ15}, we have
\[
T^*X^s=G\times_P\{(c, (x, x^*, i, j), (y, y^*))
\in \fp_{v_1, v_2}\times \bold{M}_{\overline{Q^{\heartsuit}}, v_1, w}^{ss}\times \bold{M}_{\overline Q, v_2}
\mid 
 [x, x^*]+i\circ j=\pr_1(c), [y, y^*]=\pr_2(c)\}, 
\]
where $\pr_i:  \fp_{v_1, v_2}\to \fg_{v_i}$ is the natural projection, $i=1, 2$.
The correspondence \eqref{equ:corresp 2} induces the 
following commutative diagram of $G_v\times T\times G_w$-varieties.
\begin{equation} \label{corr for action}
\xymatrix@R=1.5em{
G\times_{P}(\bold{\Lambda}_{v_1, w}^{ss}\times \bold{\Lambda}_{v_2})
\ar@{^{(}->}[d]
&&G\times_P\bold{\Lambda}_{v_1, v_2, w}^{ss} \ar@{^{(}->}[d]\ar[ll]_{\overline\phi}\ar[r]^(0.4){\overline\psi}&
\bold{\Lambda}_{v, w}^{ss}
\ar@{^{(}->}[d]\\
G\times_P(\bold{M}_{\overline{Q^\heartsuit},(v_1, w)}^{ss}\times \bold{M}_{\overline{Q}, v_2})\ar@{^{(}->}[r]^(0.8){\iota} & T^*X^s &
{G\times_P \bold{M}_{\overline{Q^{\heartsuit}}, (v_1, v_2, w)}^{ss}} \ar[l]_(.7)\phi \ar[r]^(.6)\psi & \bold{M}_{\overline{Q^\heartsuit},(v, w)}^{ss}.
}\end{equation}
Here the left square of diagram \eqref{corr for action} is a pullback diagram.

For any dimension vectors $v$, $w\in \bbN^I$, we have the isomorphism 
\[\calM(v,w):=A_{T\times G_w}(\fM(v, w))\cong A_{G_v\times T\times G_w}(\bold{\Lambda}_{v, w}^{ss}).\]
The action of $\calP(A, Q)$ on $\calM(w):=\bigoplus_{v}\calM(v,w)$, as a map
\[
a_{v_1,v_2}:  \calM(v_1,w) \otimes \mathcal{P}_{v_2} \to
\calM(v_1+v_2,w)\,\  \text{for each pair $v_1,v_2\in\bbN^I$ },
\]
is defined to be the composition of the K\"unneth morphism
\begin{align}\label{equ: action iso}
A_{G_{v_1}\times G_w \times T}(\bold{\Lambda}_{v_1, w}^{ss}) 
\otimes A_{G_{v_2}\times T}(\bold{\Lambda}_{v_2}) 
\to &A_{L\times T\times G_w}
(\bold{\Lambda}_{v_1, w}^{ss}\times \bold{\Lambda}_{v_2}),
\end{align}
 with the morphism
\[
\overline\psi_*\circ\phi^{\sharp}_{\overline{\phi}}:A_{G_{v}\times T\times G_w}\left(
G\times_P(\bold{\Lambda}_{v_1, w}^{ss}\times \bold{\Lambda}_{v_2})
\right)\to A_{G_{v}\times T\times G_w}(
\bold{\Lambda}_{v, w}^{ss})=\calM(v, w).
\]
Here  $\phi^{\sharp}_{\overline{\phi}}$ is the refined Gysin pullback of $\phi$ in diagram \eqref{corr for action}.

\Omit{
\subsection{Shuffle description}
\label{sec:shuffle prep}
As is proven in \cite[Theorem~C]{YZ15}, there is an algebra homomorphism from $\calP(A, Q)$ to the shuffle algebra $\calS\calH$, which is an isomorphism after suitable localization with respect to an ideal in $\calS\calH_v$ spelled out in detail in \cite[Remark 4.4]{YZ15}. 
We briefly recall the shuffle algebra here. The details can be found in \cite[\S~2]{YZ15} (see also \cite{YZ2} for a purely algebraic treatment). 

Let $(R,F)$ be the formal group law associated to $A$. Define $\calS\calH$ to be an $\bbN^I$-graded $R[\![t_1,t_2]\!]$-algebra. As an $R[\![t_1,t_2]\!]$-module, we have 
\[
\calS\calH=\bigoplus_{v\in\bbN^I}\calS\calH_v=\bigoplus_{v\in\bbN^I}R[\![t_1,t_2]\!]
[\![\lambda^i_s]\!]_{i\in I, s=1,\dots, v^i}^{\fS_v}, \]  where $\fS_{v}:=\prod_{i\in I} \fS_{v^i}$, and the symmetric group $\fS_{v^i}$ permutes the variables $\{\lambda^i_s\}_{s=1,\dots, v^i}$. 

For any $v_1$ and $v_2\in \bbN^I$, we consider $\calS\calH_{v_1}\otimes \calS\calH_{v_2}$ as a subalgebra of $R[\![t_1,t_2]\!][\![\lambda^i_j]\!]_{i\in I, j=1,\dots, (v_1+v_2)^i}$ by sending $\lambda'^i_s \in \calS\calH_{v_1}$ to $\lambda^i_s$, and $\lambda''^i_t \in \calS\calH_{v_2}$ to $\lambda^i_{t+v_1^i}$.
Let 
\begin{equation}\label{equ:fac1}
\fac_1:=\prod_{i\in I}\prod_{s=1}^{v_1^i}
\prod_{t=1}^{v_2^i}\frac{\lambda\rq{}^i_s-_{F}\lambda\rq{}\rq{}^i_t+_Ft_1+_Ft_2}{\lambda\rq{}\rq{}^i_t-_{F}\lambda\rq{}^i_s}. 
\end{equation}
For each arrow $h\in H$, associate two integers, $m_{h}$ and $m_{h^*}$. Define
\begin{small}\begin{equation}
\label{equ:fac2}
\fac_2:=\prod_{h\in H}\Big(
\prod_{s=1}^{v_1^{\out(h)}}
\prod_{t=1}^{v_2^{\inc(h)}}
(\lambda_t^{'' \inc(h)}-_{F}\lambda_s^{'\out(h)}+_{F} m_h \cdot t_1)
\prod_{s=1}^{v_1^{\inc(h)}}
\prod_{t=1}^{v_2^{\out(h)}}
(\lambda_t^{''\out(h)}-_{F}\lambda_s^{'\inc(h)}+_{F}m_{h^*}\cdot t_2)
\Big),
\end{equation}\end{small}
where for $m\in \N$, $m \cdot t=t+_{F} t +_{F} \cdots +_{F} t$ is the summation of $m$ terms.
The multiplication of $f_1(\lambda')\in \calS\calH_{v_1}$ and $f_2(\lambda'')\in \calS\calH_{v_2}$ is defined to be
\begin{equation}\label{shuffle formula}
\sum_{\sigma\in\Sh(v_1,v_2)}\sigma(f_1\cdot f_2\cdot \fac_1\cdot \fac_2)\in R[\![t_1,t_2]\!]
[\![\lambda^i_j]\!]_{i\in I, j=1,\dots, (v_1+v_2)^i}^{\fS_{v_1+v_2}}, 
\end{equation}
where $\Sh(v_1,v_2)=\prod_{i\in I}\Sh(v_1^i,v_2^i) \subset \fS_{v_1+v_2}$ is the subset consisting of 
$(v_1^i, v_2^i)$-shuffles, for $i\in I$ (permutations of 
$\{1, \cdots, v^i \}$ that preserve the relative order of $\{1, \cdots, v_1^i \}$ and $\{v_1^i+1, \cdots, v^i\}$).

When $A$ is the equivariant Chow group or the equivariant $K$-theory, the shuffle algebra has a definition without going to the completion, as remarked in \cite[Remark~3.5]{YZ15}.
\subsection{The Yangian}
\label{sec:Yangian}
To compare the preprojective CoHA with the Yangian, we assume the torus action is given by Example \ref{ex:two t and hbar}. Furthermore, we modify the multiplication of CoHA $\mathcal{P}(A, Q)$ by a sign coming from the Euler-Ringel form. More explicitly, let $\overline{A}$ be the adjacency matrix of the quiver $Q$, whose $(kl)$-entry is given by $\overline{A}_{kl}:=\#\{h\in H  \mid \out(h)=k, \inc(h)=l\}$, and let $\overline C:=I-\overline{A}$. The multiplication of $f_1(\lambda')\in \calP_{v_1}$ and 
$f_2(\lambda'')\in \calP_{v_2}$ is modified to be (see \cite[\S5.6]{YZ15})
\begin{align}\label{twist formula}
\sum_{\sigma\in\Sh(v_1,v_2)}
(-1)^{(v_2, \overline{C} v_1)}
\sigma(f_1\cdot f_2\cdot \fac_1\cdot \fac_2).
\end{align}
Let us denote the resulting algebra by $\widetilde{\calP}$. Let $\underline{\widetilde{\calP}}$ be the quotient of $\widetilde{\calP}$ by the $A_T(\pt)$-torsion in the sense of \cite[Remark 4.4]{YZ15}.

For each $k\in I$, let $e_k$ be the dimension vector valued 1 at vertex $k$ and zero otherwise. 
We define the {\it spherical preprojective CoHA}, denoted by $\calP^{\sph}(A, Q)$,  the  subalgebra of $\calP(A, Q)$ generated by $\calP_{e_k}=A_{T\times G_{e_k}}(\bold{\Lambda}_{e_k})$ as  $k$ varies in $ I$.

Let $Q$ be a quiver without edge loops, and assume the formal group law of $A$ is additive, e.g., when $A$ is $\CH$ or $\BM$.
Following Nakajima \cite{Nak99},  Varagnolo constructed an action of the Yangian $Y_\hbar(\fg_Q)$ on the equivariant $A$-homology of quiver varieties \cite{Va00}. 
Let $Y_{\hbar}^{+}(\mathfrak{g}_Q)$  be the positive part of $Y_{\hbar}(\mathfrak{g}_Q)$.
Assume the pair $m_h,m_{h^*}$ is given the same way as in Example \ref{ex:two t and hbar}. 
It is shown in \cite[Theorem~D]{YZ15} that 
there is a surjective algebra homomorphism $Y_\hbar^+(\fg_Q)\to \underline{\widetilde{\calP}^{\sph}}(A,\widehat{Q})$. It is injective when $Q$ is of type $A,D,E$.
Here the target is the spherical subalgebra of  $\widetilde{\calP}(A,\widehat{Q})$, quotient by the $A_T(\pt)$-torsion. Furthermore this algebra homomorphism intertwines the actions of these algebras on 
 $\calM(w)=\bigoplus_{v\in\bbN^I}\CH_{G_w\times T}(\mathfrak{M}(v,w))$ for any $w\in\bbN^I$. 
In a future investigation of the authors with N. Guay, we expect to show that this map is an isomorphism for a more general class of quivers including the affine case.

From entirely different considerations, Maulik-Okounkov in \cite[\S 5]{MO} constructed another algebra $Y_{\MO}$, which also acts on the equivariant $A$-homology of quiver varieties. 
It is also shown in \cite[Theorem~D]{YZ15} that there is an algebra homomorphism  $\Phi: \widetilde{\calP}^{\sph}(\CH) \to Y_{\MO}$; the  image is a subalgebra of $Y^{+}_{\MO}$ which is a quantization of $U(\fn_{Q}[u])$. 
The kernel of $\Phi$ is trivial when $Q$ is of type $A,D,E$, and is described in \cite{YZ15} for a general quiver.
}

\section{Preprojective CoHA vs critical CoHA}
\label{sec:aux CoHA}
We compare the preprojective CoHA of quiver $Q$ with the auxiliary CoHA introduced in \S \ref{sec:aux CoHA in sec1} applied to a special quiver $\widehat{Q}$ with potential $W$ as below.
\subsection{A special case of potential and cut}
\begin{example}\label{exam:GuinAlg}
Let $Q=(I, H)$ be any quiver. 
Let  $\Gamma$ be the extended quiver $\widehat{Q}$
introduced by Ginzburg in \cite{G}.
More precisely,  $\widehat{Q}$ has the same set of vertices as $Q=(I, H)$, and the following set of arrows:
\begin{enumerate}
\item an arrow $a: i\to j$ for any arrow $a: i\to j$ in $Q$,
\item an arrow $a^*: j\to i$ for any arrow $a: i\to j$ in $Q$,
\item a loop $l_i:i\to i$ for any vertex $i$ in $Q$.
\end{enumerate}
Define a potential $W$ on $\widehat{Q}$ by the formula
\[ 
W=\sum_{(a: i\to j)\in H} (l_j a a^*-l_i a^* a)=
\sum_{i\in I} l_i \cdot \sum_{a\in H}[a, a^*].
\]
Let $C=\{l_i \mid i\in I\}$ be the cut of the pair $(\widehat{Q}, W)$. In this case, the space $\bold{J}_{\Gamma\backslash C, v}$ is the representation space of the preprojective algebra $\Pi_{Q}:=\C\overline Q/(\sum_{a\in H} [a, a^*])$. And for any $v\in\bbN^I$ we have the equality
\[
\bold{J}_{\Gamma\backslash C, v} \times \bold{M}_{C, v}=
\bold{\Lambda}_{v}\times  \fg_{ v}.
\]
\end{example}
\begin{example}\label{ex:Q heart}
Another example of the quiver with potential is $(\widehat{Q}^{\spadesuit}, W^{\spadesuit})$. 
Define a new quiver $\widehat{Q}^{\spadesuit}$ to have the same set of vertices as $Q^\heartsuit$ and the following arrows:
\begin{enumerate}
\item an arrow $a: i\to j$ for any arrow $a: i\to j$ in $Q^\heartsuit$,
\item an arrow $a^*: j\to i$ for any arrow $a: i\to j$ in $Q^\heartsuit$,
\item a loop $l_i:i\to i$ for any vertex $i$ in $Q$.
\end{enumerate}
Let $C=\{l_i\mid i\in I\}$ be the cut and $W^{\spadesuit}
%=\sum_{(a: k\to s)\in H} (l_s a a^*-l_k a^* a)+\sum_{k\in I} l_k i_k j_k
=\sum_{k\in I} l_k \cdot \big(\sum_{a\in H}[a, a^*] +i_k  j_k\big)$ be the potential on $\widehat{Q}^{\spadesuit}$. 
In this case, the space $\bold{J}_{\Gamma \backslash C, v}$ is the representation space of $\C \overline{Q^{\heartsuit}}/(\sum_{a\in H} [a, a^*]+i j)$. Hence, we have the equality
\[
\bold{J}_{\Gamma \backslash C, v} \times \bold{M}_{C, v}
= \bold{\Lambda}_{v, w}\times \fg_{ v}.
\] The natural projection $\pi: \bold{\Lambda}_{v, w}\times \bold{M}_{C, v} \to  \bold{\Lambda}_{v, w}$ is a $G_{v}\times G_{w}$-equivariant vector bundle. We consider $\pi^{-1}(\bold{\Lambda}_{v, w}^{ss})= \bold{\Lambda}_{v, w}^{ss}\times \bold{M}_{C, v}$. Then $(\bold{\Lambda}_{v, w}^{ss}\times \bold{M}_{C, v})/G_{v}$ is an equivariant vector bundle on the quiver variety $\mathfrak{M}(v, w)=\bold{\Lambda}_{v, w}^{ss}/G_{v}$.
\end{example}
\subsection{The auxiliary CoHA in the case when $\Gamma=\widehat{Q}$}
\label{ex:ExtendedFramed}

Recall in \S \ref{subsec:AuxCoHA}, the {\it auxiliary cohomological Hall algebra}  associated to the data 
$(\widehat{Q}, W, C)$ and arbitrary cohomology theory $A$ is
 \[
\calH^{\aux}(\widehat{Q}, W, C)=
\bigoplus_{v\in \N^{\Gamma_0} } A_{G_v\times T} 
(\bold{\Lambda}_{v}\times \fg_v, \Q).
 \]
 Similar to \eqref{corresp with fiber} and \eqref{diag:Lag}, we have the following correspondence.
\begin{equation}
\label{equ:corres of crit coha}
\xymatrix@R=1em @C=0.5em{
%----line 1--
G\times_{P}(\bold{M}_{\widehat{Q}, v_1}
\times \bold{M}_{\widehat{Q}, v_2})
\ar@{^{(}->}[r]
%^{\iota}
&G\times_{P} (\bold{Y}\times  \fg_{v_1}\times \fg_{v_2}) & 
G\times_P\bold{M}_{\widehat{Q}, v_1, v_2}
\ar[l]_(0.4){q} \ar[r]^(0.55){\hat\eta} &
\bold{M}_{\widehat{Q}, v_1+v_2}\\
%----line 2--
G\times_{P}(
\bold{\Lambda}_{v_1}\times \fg_{v_1}\times
\bold{\Lambda}_{v_2}\times \fg_{v_2})
\ar@{^{(}->}[u]^{i_1}
& &
G\times_P(\bold{\Lambda}_{v_1, v_2}\times \fp_{v_1, v_2})
\ar[ll]_{\overline{q}}\ar[r]^(0.7){\overline {\hat\eta}} \ar@{^{(}->}[u]^{i_2} &
\bold{\Lambda}_{v}\times \fg_{v}
\ar@{^{(}->}[u]^{i_3}.
}\end{equation} 
Here the map $\hat{\eta}$ is defined by $(g, x)\mapsto g x g^{-1}$. The map $q$ is given by the product of 
$\omega: \bold{M}_{\overline{Q}, v_1, v_2} \to \bold{Y}$ (in diagram \eqref{corresp with fiber})
and the natural projection $\fp_{v_1, v_2}\to \fg_{v_1}\times  \fg_{v_2}$. Recall that $T$ acts on the Lie algebra $\fg_v^*$ by weight $t_1t_2$. 
Consequently,  the action of $T$ on $\fg_v$ is such that both $\Gm$-factors of $T$ have weight -1. We assume the extra $T$-action on $\bold{\Lambda}_{v}\times \fg_{v}$ restricted to $\bold{\Lambda}_{v}$ satisfies Assumption~\ref{Assu:WeghtsGeneral}, so that the correspondence is equivariant. 

The Hall multiplication $m^{\aux}_{v_1,v_2}$ of $\calH^{\aux}(\widehat{Q}, W, C)$ defined in \S \ref{subsec:AuxCoHA} is the same as the composition of the following morphisms.
\begin{enumerate}
\item The K\"unneth morphism
\begin{align*}
&A_{G_{v_1}\times T}(\bold{\Lambda}_{v_1}\times \fg_{v_1}) \otimes 
A_{G_{v_2}\times T}(\bold{\Lambda}_{v_2}\times \fg_{v_2})
\to
A_{L\times T}(\bold{\Lambda}_{v_1}\times \fg_{v_1}\times\bold{\Lambda}_{v_2}\times \fg_{v_2}).
\end{align*}
\item The isomorphisms:
\[
A_{L\times T}(\bold{\Lambda}_{v_1}\times \fg_{v_1}\times\bold{\Lambda}_{v_2}\times \fg_{v_2})
\cong 
A_{G\times T}(G\times_{P}(\bold{\Lambda}_{v_1}\times \fg_{v_1}\times\bold{\Lambda}_{v_2}\times \fg_{v_2}).
\]
\item 
The refined Gysin pullback along $q$ in \eqref{equ:corres of crit coha}:
\begin{align*}
q^\sharp_{\overline{q}}: 
A_{G\times T}(G\times_{P}(\bold{\Lambda}_{v_1}\times \fg_{v_1}\times\bold{\Lambda}_{v_2}\times \fg_{v_2})) \to 
A_{G\times T}\big(G\times_P (\bold{\Lambda}_{v_1, v_2}\times \fp_{v_1, v_2}) \big).
\end{align*}
\item
The pushforward $\overline {\hat\eta}_*$ in \eqref{equ:corres of crit coha}:
\begin{align*}
\overline {\hat\eta}_*:  A_{G\times T}\big(G\times_P (\bold{\Lambda}_{v_1, v_2}\times \fp_{v_1, v_2})\big)
\to 
A_{G\times T}(\bold{\Lambda}_{v}\times \fg_{v}).
\end{align*}
\end{enumerate}

\subsection{Action on the cohomology of quiver varieties}
Similar to \S\ref{subsec:PreproRepn}, we show $\calH^{\aux}(\widehat{Q}, W, C)$ acts on the equivariant $A$-homology of the Nakajima quiver varieties. As in \S\ref{ex:ExtendedFramed}, we will also take the $T$-action into consideration. 

Notations as in \S\ref{subsec:PreproRepn}. For any $v\in\bbN^I$, we have the isomorphism 
\[\calM(v, w):=A_{G_{w}\times T}(\mathfrak{M}(v, w))\cong A_{G_v\times 
G_{w}\times T}(\bold{\Lambda}_{v, w}^{ss}\times \fg_v).\]
For any $v_1,v_2, w \in \bbN^I, v=v_1+v_2$, we define a map
\[
a_{v_1, v_2}^{\aux}: \calM(v_1, w) \otimes \calH^{\aux}_{v_2}(\widehat{Q}, W, C) \to \calM(v, w).
\]
Let $V, W$ be two $I$-tuple of vector spaces with dimension vectors $v, w\in \bbN^I$. Recall we have the isomorphism
\[
\bold{M}_{\widehat{Q}^{\spadesuit}, v, w} 
\cong 
\bold{M}_{\overline{Q^{\heartsuit}}, (v, w)} \times \fg_v
\cong \bold{M}_{\overline{Q}, v} \times \Hom_{Q}(V, W) \times \Hom_{Q}(W, V) \times \fg_v.
\]
We start with the correspondence
\[
\xymatrix
{\bold{M}_{\widehat{Q}^{\spadesuit}, (v_1, w)}   \times 
\bold{M}_{\widehat{Q}, v_2}
& \bold{M}_{\widehat{Q}^{\spadesuit}, (v_1, v_2, w)}
\ar[l]_(0.4){p}\ar[r]
&
\bold{M}_{\widehat{Q}^{\spadesuit}, (v, w) }
},\]
where 
$
\bold{M}_{\widehat{Q}^{\spadesuit}, (v_1, v_2, w)}:= \{(b, i, j) \mid b \in  
\bold{M}_{\widehat{Q}, (v_1, v_2)}, i\in \Hom(W, V), 
j\in \Hom(V, W),   \Image(i)\subset V_1\}.
$
For $(b, i, j) \in \bold{M}_{\widehat{Q}^{\spadesuit}, (v_1, v_2, w)}$, 
denote by $(\pr_1(b), \pr_2(b))$ the projection of $b$ to $\bold{M}_{\widehat{Q}, v_1}\times \bold{M}_{\widehat{Q}, v_2}$.  Let $i_{V_1}: W\to V_1$ be the co-restriction of $i$ on $V_1$, and $j_{V_1}$ the restriction of $j: V\to W$ on $V_1$. The map $p$ is defined to be $p: (b, i, j) \mapsto (\pr_1(b), \pr_2(b), j_{V_1}, i_{V_1})$.

Let $\pi_i: \fp_{v_1, v_2}\to \fg_{v_i}$ be the natural projection, $i=1, 2$. 
Consider the varieties
\begin{align*}
\bold{Y}^s:=\{(l, (x, x^*&, i, j), (y, y^*))
\in \fp_{v_1, v_2}\times \bold{M}_{\overline{Q^{\heartsuit}}, v_1, w}^{ss}\times \bold{M}_{\overline Q, v_2}
\mid 
 [x, x^*]+i\circ j=\pi_1(l), [y, y^*]=\pi_2(l)\},\\
\bold{M}_{\widehat{Q}^{\spadesuit}, (v_1, v_2, w)}^{ss}
=&\bold{M}_{\widehat{Q}^{\spadesuit}, (v_1, v_2, w)}\cap 
(\bold{M}_{\overline{Q^{\heartsuit}}, (v_1, w)}^{ss}\times \fg_{v})\\
=&\{(l, x, x^*, i, j) \in \bold{M}_{\widehat{Q}^{\spadesuit}, (v_1, v_2, w)}\mid  l\in \fp_{v_1, v_2}, (x, x^*)\in \bold{M}_{\overline{Q}, (v, w)}^{ss} \}.
\end{align*}
Define a map from $q: \bold{M}_{\widehat{Q}^{\spadesuit}, (v_1, v_2, w)}^{ss} \to \fg_{v_1}\times \fg_{v_2}\times \bold{Y}^s$ by
$
(l, x, x^*, i, j) \mapsto$ $ (\pi_1(l), \pi_2(l), [x, x^*]+i\circ j, 
\pr_1(x), \pr_2(x), j_{V_1}, i_{V_1}).$
We have the embedding $\bold{\Lambda}_{v, w}\times \fg_{v}\subset \bold{M}_{\widehat{Q}^{\spadesuit}, (v, w)}$.
Let $\bold{\Lambda}_{v_1, v_2, w}^{ss}$ be the intersection of  $\bold{\Lambda}_{v, w}^{ss} \times \fg_{v}$ with 
$\bold{M}_{\widehat{Q}^{\spadesuit}, (v_1, v_2, w)}$. 

We have the following correspondence similar to \eqref{corr for action}, with the left square being Cartesian.
\begin{equation}\label{corr:the one in KS for mod}
\xymatrix@C=1.5em @R=1.5em{
{G\times_{P}(
\bold{\Lambda}_{v_1, w}^{ss} \times \fg_{v_1} 
\times \bold{\Lambda}_{v_2} \times \fg_{v_2}
)} \ar@{^{(}->}[d]& 
G\times_P(\bold{\Lambda}_{v_1, v_2, w}^{ss}\times \fp_{v_1, v_2})
\ar[l]_(0.45){\overline{q}}\ar[r]^(0.65){\overline {\hat\eta}} \ar@{^{(}->}[d]&
\bold{\Lambda}_{v, w}^{ss} \times 
\fg_{v}\ar@{^{(}->}[d]\\
%------------------------------------------------------
G\times_{P}(\bold{Y}^{s} \times \fg_{v_1}\times \fg_{v_2}) & 
{G\times_P \bold{M}_{\widehat{Q}^{\spadesuit}, (v_1, v_2, w)}^{ss}}
\ar[l]_{q}\ar[r]^{\hat\eta} &
\bold{M}_{\overline{Q^{\heartsuit}}, (v, w)}^{ss} 
 \times \fg_{v}
}\end{equation}
The action map $a^{\aux}_{v_1, v_2}$ on $\mathcal{M}(w)=\bigoplus_{v\in \N^I}\calM(v, w)$ is defined to be the composition of the following morphisms.
\begin{enumerate}
\item The K\"unneth morphism
\begin{align*}
A_{G_{v_1}\times G_w \times T}(\bold{\Lambda}_{v_1, w}^{ss} \times \fg_{v_1})
\times 
A_{G_{v_2} \times T}(\bold{\Lambda}_{v_2} \times \fg_{v_2})
\to &
A_{L\times G_w \times T}(\bold{\Lambda}_{v_1, w}^{ss} \times \fg_{v_1} \times \bold{\Lambda}_{v_2} \times \fg_{v_2})\\
\cong &
A_{G\times G_w\times T} \big( G\times_{P} (\bold{\Lambda}_{v_1, w}^{ss} \times \fg_{v_1} \times \bold{\Lambda}_{v_2} \times \fg_{v_2})\big).
\end{align*}
\item 
The refined Gysin pullback $q^\sharp_{\overline{q}}$:
\begin{align*}
A_{G\times G_w\times T} \big( G\times_{P} (\bold{\Lambda}_{v_1, w}^{ss} \times \fg_{v_1} \times \bold{\Lambda}_{v_2} \times \fg_{v_2})\big) \to 
A_{G\times G_w\times T}
\big(G\times_P
(\bold{\Lambda}_{v_1, v_2, w}^{ss}\times \fp_{v_1, v_2}) \big).
\end{align*}
\item
The pushforward $\overline{\hat \eta}_*$ in the correspondence \eqref{corr:the one in KS for mod}
\begin{align*}
\overline{\hat \eta}_*:  A_{G\times G_w\times T}\big(G\times_P
(\bold{\Lambda}_{v_1, v_2, w}^{ss}\times \fp_{v_1, v_2})
\big)\to 
A_{G\times G_w\times T}\big(
\bold{\Lambda}_{v_1+v_2, w}^{ss} \times 
\fg_{v}\big).
\end{align*}
\end{enumerate}
We have the following
\begin{theorem}\label{thm:crit CoHA action}
For any $w\in\bbN^I$, the maps $a^{\aux}_{v_1,v_2}$  define an algebra homomorphism 
$\calH^{\aux}(\widehat{Q}, W, C) \to \End(\calM(w))$.
\end{theorem}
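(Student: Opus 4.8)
The plan is to follow the template of \cite[Theorem~B]{YZ15}, where the analogous action of the preprojective CoHA on the $A$-homology of quiver varieties was established, carrying along the extra $\Rep(C,-)$-bundle factors throughout. First I would record that each $a^{\aux}_{v_1,v_2}$ is well defined: the map $\overline\phi$ sits in a Cartesian square over $\phi$ in \eqref{corr:the one in KS for mod}, so the refined Gysin pullback $\phi^\sharp$ is defined and restricts as claimed; the map $\overline\eta$ is proper since it factors through the projective $G/P$ (it is $G\times_P$ of a closed embedding composed with the projection to $\Rep(\overline{Q^\heartsuit},v_1+v_2,w)^{ss}\times\Rep(C,v_1+v_2)$), so $\overline\eta_*$ makes sense; and by \cite[Lemma~5.1(3)]{YZ15} the middle term of \eqref{corr:the one in KS for mod} is a $\calV^C$-bundle over $Z_G^s$, which identifies the construction, as in Proposition~\ref{prop:multi calH}, with the preprojective action map followed only by a flat pullback along, and proper pushforward of, the $\Rep(C,-)$-directions. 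Equivariance for $T$ (hence $\bbQ[t_1,t_2]$-linearity) and the $\bbN^I$-grading are immediate from the construction.

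The module identity $a^{\aux}_{v_1+v_2,v_3}\circ(a^{\aux}_{v_1,v_2}\otimes\id)=a^{\aux}_{v_1,v_2+v_3}\circ(\id\otimes m^{\aux}_{v_2,v_3})$ is then proved by the usual diagram chase. Over the partial flag $V_1\subset V_1\oplus V_2\subset V_1\oplus V_2\oplus V_3=V$ (with $V_1$ receiving $\Image i$) one builds a three-step correspondence; the point is that the two bracketings --- first merging $v_2$ into $v_1$ and then $v_3$, versus first multiplying $\calH_{v_2}$ with $\calH_{v_3}$ and then acting on $\calM(v_1,w)$ --- produce canonically isomorphic twisted products $G\times_{P}(\cdots)$ carrying identical maps to the common source $\mu^{-1}_{v_1,w}(0)^{ss}\times\Rep(C,v_1)\times\mu^{-1}_{v_2}(0)\times\Rep(C,v_2)\times\mu^{-1}_{v_3}(0)\times\Rep(C,v_3)$ and the common target $\mu^{-1}_{v,w}(0)^{ss}\times\Rep(C,v)$; semistability of the relevant intermediate loci, and its preservation by the structure maps, is again guaranteed by \cite[Lemma~5.1]{YZ15}. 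On cohomology the two composite operators are matched using functoriality of refined Gysin pullbacks, base change $\phi^\sharp\psi_*=\overline\psi_*\overline\phi^\sharp$ along Cartesian squares, and the projection formula, exactly as in \cite[Theorem~4.1]{YZ15}; this is the same mechanism that yields Proposition~\ref{prop:critial COHA}.

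The main obstacle is the bookkeeping of excess bundles. Unlike in the purely preprojective setting, several of the squares here are not themselves Cartesian but only \emph{fibered over} Cartesian squares with fiber $\Rep(C,-)$, as in \eqref{diag:pullback fiber}; one must check that the refined Gysin pullbacks are taken along the correct excess/normal bundles and that pulling back along a $\Rep(C,-)$-bundle commutes with the Gysin map along its flag-compatible sub-bundle $\calV^C$ (whose complement $(\calV^C)^\perp$ is the strictly lower block-triangular part). Once one verifies that these $C$-directions split off compatibly at every stage --- so that they contribute only an overall flat pullback and proper pushforward that is transparently associative --- the identity reduces to the already-known compatibility for the preprojective CoHA correspondences, and the theorem follows.
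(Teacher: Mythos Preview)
Your proposal is correct and takes essentially the same approach as the paper: the paper's proof is the single sentence ``The proof of Theorem~\ref{thm:crit CoHA action} goes the same way as that of \cite[Theorem~5.4]{YZ15},'' and your write-up is precisely an unpacking of what that sentence entails, carrying the $\Rep(C,-)$-factors along. Your observation that the $C$-directions split off as a flat bundle and a compatible sub-bundle $\calV^C$ (so that the associativity reduces to the preprojective case) is a fair way to organize the check, and is consistent with how the paper treats the parallel associativity in Proposition~\ref{prop:critial COHA} via \cite[Theorem~4.1]{YZ15}.
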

The proof of Theorem~\ref{thm:crit CoHA action} goes the same way as that of \cite[Theorem~5.4]{YZ15}, taking into account the Lie algebra factors in the spaces which do not show up in {\it loc. cit.}.

\subsection{Multiplications of $\calH^{\aux}(\widehat{Q}, W, C)$ and $\calP(A, Q)$}
\label{sec:aux and pro projective}
By the fact that $A_{G\times T}(\bold{\Lambda}_{v}\times \fg_{v})\cong A_{G\times T}(\bold{\Lambda}_{v})$, there is an isomorphism of vector spaces $\calP_v(Q)\cong \calH_v^{\aux}(\widehat{Q}, W, C)$ for any $v\in\bbN^I$. However, this isomorphism is not compatible with the multiplications.
\begin{prop}\label{prop:prep and crit}
Let $m^{\aux}$ be the multiplication of  $\calH^{\aux}(\widehat{Q}, W, C)$. Then, for $x\in\calH_{v_1}^{\aux}$ and $y\in\calH_{v_2}^{\aux}$, we have
\[
m^{\aux}(x\otimes y)=\overline \psi_{*} \Big(e(\gamma) \cdot \phi^{\sharp}_{\overline{\phi}} (x\otimes y)\Big),
\]
where $\phi$ is as in \S~\ref{subsec:HallMulti}, \eqref{diag:Lag}
and $
e(\gamma)$ is the equivariant Euler class of the closed embedding
\begin{align*}
\gamma: G\times_P
(\bold{\Lambda}_{v_1, v_2}\times \fp_{v_1, v_2} )
&\inj (G\times_P
\bold{\Lambda}_{v_1, v_2})
\times \fg_{v_1+v_2},\\
(g, a, b) &\mapsto ((g, a), gbg^{-1}), \,\ \text{for $a\in \bold{\Lambda}_{v_1+v_2}, b\in \fp_{v_1, v_2}$.}
\end{align*}
\end{prop}

\begin{proof}
Recall the following diagram of correspondences:
\[
\xymatrix@C=1.5em @R=1.5em{
G\times_{P}(\bold{\Lambda}_{v_1}\times \bold{\Lambda}_{v_2}) & G\times_P \bold{\Lambda}_{v_1, v_2}\ar[l]_{\overline \phi}\ar[r]^{\overline \psi} &\bold{\Lambda}_{v_1+v_2}\\
G\times_{P} (
\bold{\Lambda}_{v_1} \times \fg_{v_1} 
\times \bold{\Lambda}_{v_2} \times \fg_{v_2}
)\ar[u]^{\pi_{v_1}\times \pi_{v_2}}& 
G\times_P (\bold{\Lambda}_{v_1, v_2}\times \fp_{v_1, v_2})
\ar[l]_(0.4){\overline q}\ar[r]^(0.6){\overline {\hat\eta}} \ar[u]^{\pi}  &
\bold{\Lambda}_{v_1+v_2} \times \fg_{v}\ar[u]_{\pi_{v_1+v_2}}
}\]
where the map $\pi$ is a vector bundle with fiber $\fp_{v_1, v_2}$.
Under the natural isomorphisms induced by affine bundles, we have 
$q^{\sharp}_{\overline{q}}=\phi^{\sharp}_{\overline \phi}$.
Note that the right square is not a Cartesian diagram. We factor the map $\overline {\hat\eta}$ as $(\overline{\psi}\times \id)\circ \gamma$, where $\gamma$ is a closed embedding. 
 \begin{equation}\label{dia:gamma} \xymatrix@R=1.5em{
&G\times_P \bold{\Lambda}_{v_1, v_2}\ar[r]^{\overline \psi} &\bold{\Lambda}_{v_1+v_2}\\
G\times_P (\bold{\Lambda}_{v_1, v_2}\times \fp_{v_1, v_2})
\ar@{^{(}->}[r]^{\gamma}\ar[ru]^{\pi}
\ar@/_1.5pc/[rr]_{\overline{\hat \eta}}&
(G\times_P \bold{\Lambda}_{v_1, v_2}) \times \fg_{v_1+v_2}
\ar[r]^(0.55){\overline{\psi}\times \id}\ar[u]^{\pr_1}  &
\bold{\Lambda}_{v_1+v_2} \times 
\fg_{v_1+v_2}\ar[u]_{\pi_{v_1+v_2}}
}\end{equation}
Clearly, the square in above diagram is a pullback diagram and satisfies the conditions in \cite[Lemma~1.16]{YZ15}. The pushforward $\gamma_*: A_{G\times T}(G\times_P \bold{\Lambda}_{v_1, v_2}) \to A_{G\times T}(G\times_P \bold{\Lambda}_{v_1, v_2})$ is given by $
 x\mapsto x\cdot e(\gamma)$. The proposition now follows from the definition of Hall multiplication $m^{\aux}$ of $\calH^{\aux}(\widehat{Q}, W, C)$.
\end{proof}
\Omit{
Proposition \ref{prop:prep and crit} and the shuffle formula \eqref{shuffle formula} give the following shuffle description of $\calH^{\aux}(\widehat{Q}, W, C)$.
Let $\calS\calH^{\aux}$ be the auxiliary shuffle algebra, which is isomorphic to the shuffle algebra 
$\calS\calH$ as abelian groups. 
For $f_1\in \calS \calH_{v_1}^{\aux}$ and $f_2\in \calS\calH_{v_2}^{\aux}$, the multiplication $m_{v_1,v_2}^{\aux}(f_1\otimes f_2)$ is given by the shuffle formula
\begin{equation}\label{equ:crit shuffle}
\sum_{\sigma\in \Sh(v_1,v_2)}\sigma\big(f_1(\lambda'^i_s)\cdot f_2(\lambda''^j_t )
\cdot \fac_1\cdot \fac_2\cdot e(\gamma) \big), \end{equation}
where $\fac_1, \fac_2$ are given by \eqref{equ:fac1} and \eqref{equ:fac2}. 
}
Recall that $\gamma$ is the map
\[
\gamma: G\times_P (\fp_{v_1, v_2}\times \bold{\Lambda}_{v_1, v_2}) \inj 
G\times_P \bold{\Lambda}_{v_1, v_2} \times \fg_{v_1+v_2}, \,\ 
(g, a, b) \mapsto ((g, a), gbg^{-1}).\] 
In particular 
the normal bundle to $\gamma$ can be identified with the normal bundle of $\fp_{v_1, v_2}$ in $\fg_{v_1+v_2}$. 

Let $a^{\prepr}$ be the (right) action of $\calP(A, Q)$ on $\calM(w)$. By construction, $a^{\prepr}(m\otimes x)=\overline \psi_{*}  \phi^{\sharp}_{\overline{\phi}} (m\otimes x)$, where $\phi$ and $\overline{\psi}$ are as in \S~\ref{subsec:PreproRepn}, \eqref{corr for action}. 
\begin{prop}\label{prop:prep and crit for action}
Let $a^{\aux}$ be the (right) action of $\calH^{\aux}(\widehat{Q}, W, C)$ on $\calM(w)$.
We then have:
\[
a^{\aux}(m\otimes x)=\overline \psi_{*} \big(e(\gamma) \cdot \phi^{\sharp}_{\overline{\phi}} (m\otimes x)\big).
\]
\end{prop}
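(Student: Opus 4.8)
The plan is to mimic the proof of Proposition~\ref{prop:prep and crit} almost verbatim, replacing the correspondence for the multiplication of $^A\calP(Q)$ by the correspondence \eqref{corr for action} / \eqref{corr:the one in KS for mod} used for the action on $\calM(w)$. The point is that, by Example~\ref{ex:ExtendedFramed} (specifically the identification of $Z_G^s$ via \cite[Lemma~5.1(3)]{YZ15}), the middle term of the correspondence \eqref{corr:the one in KS for mod} defining $a^{\aux}$ is a vector bundle $\pi\colon G\times_P(\eta^{-1}(\mu^{-1}_{v_1+v_2,w}(0)^{ss})\times\calV^C)\to Z_G^s$ with fiber $\calV^C$, exactly parallel to the situation in Proposition~\ref{prop:prep and crit}. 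So I would first write down the analogue of the big diagram in that proof, with top row the $\calP$-action correspondence $T^*_GX^s\xleftarrow{\overline\phi}Z_G^s\xrightarrow{\overline\psi}T^*_GX'\cap T^*X'^s$ and bottom row the $\calH$-action correspondence, connected by the vector-bundle maps $\pi_{v_1}\times\pi_{v_2}$, $\pi$, and $\pi_{v_1+v_2}$ (the bundles whose fibers are $\Rep(C,v_1)\times\Rep(C,v_2)$, $\calV^C$, and $\Rep(C,v_1+v_2)$ respectively).

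The argument then has two steps. First, the left square is Cartesian (this is asserted in the statement of diagram~\eqref{corr:the one in KS for mod}), which gives the base-change identity $\overline p^\sharp\circ(\pi_{v_1}\times\pi_{v_2})^*=\pi^*\circ\phi^\sharp$ for the refined Gysin pullbacks. Second, the right square fails to be Cartesian, and I would factor $\overline\eta$ as $(\overline\psi\times\id)\circ\gamma$ with $\gamma$ the closed embedding described in the statement of Proposition~\ref{prop:prep and crit for action} — but now over $\mu^{-1}_{v_1+v_2,w}(0)^{ss}$ rather than $\mu^{-1}_{v_1+v_2}(0)$ — yielding a genuine pullback square to which \cite[Lemma~1.16]{YZ15} applies, so that $\gamma_*(x)=x\cdot e(\gamma)$ where $e(\gamma)$ is the equivariant Euler class of the quotient bundle of $\gamma$ over $Z_G^s$. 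Combining, $a^{\aux}(m\otimes x)=\overline\eta_*\circ\phi^\sharp(m\otimes x)$ unwinds, via the vector-bundle identifications $\pi_{v_1+v_2}^*$ (which are isomorphisms on equivariant $A$-homology) and the two displayed identities, to $\overline\psi_*(e(\gamma)\cdot\phi^\sharp(m\otimes x))$.

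One subtlety worth flagging: I should check that the bundle $\gamma$ here has the same quotient bundle — i.e.\ the same class $e(\gamma)$ — as in Proposition~\ref{prop:prep and crit}, so that the two formulas are compatible and the sign/Euler-class bookkeeping in Theorem~\ref{ThmIntr:Crit}(3) goes through. This is true because the strictly-lower-triangular complement $(\calV^C)^\perp$ and hence the normal bundle of $\gamma$ only depends on the block decomposition $V=V_1\oplus V_2$ of the loop representations $\Rep(C,-)$, not on the presence of the framing $w$; the framing only enters the base $Z_G^s\subset Z_G$, not the fibers. So $e(\gamma)$ on $Z_G^s$ is simply the restriction of $e(\gamma)$ on $Z_G$.

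\textbf{Main obstacle.} The genuinely non-formal points are (i) verifying that the left square of \eqref{corr:the one in KS for mod} is Cartesian in the semistable/framed setting with the extra $\Rep(C,v)$-factors present — this is where one uses that $\fR(J^C_W,v)$ is a vector bundle over $\mu_v^{-1}(0)$ compatibly with the semistable loci — and (ii) the compatibility of refined Gysin pullbacks with the vector-bundle flat pullbacks $\pi^*$, which is exactly the content imported from \cite[Lemma~1.16]{YZ15} and the base-change properties recalled in the appendix (\S\ref{App}). Neither is deep, but (i) requires care that restricting to $\mu^{-1}_{v_1+v_2,w}(0)^{ss}$ does not spoil the fiber-product computation; I would handle this by noting the bundle maps $\pi$ etc.\ are the pullbacks of the corresponding bundle maps in the unframed case along the open inclusion of semistable loci, so Cartesianness is inherited. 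Everything else is a routine transcription of the proof of Proposition~\ref{prop:prep and crit}.
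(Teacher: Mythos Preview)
Your proposal is correct and takes exactly the approach the paper does: the paper's proof consists of the single sentence ``The proof is the same as the proof of Proposition~\ref{prop:prep and crit},'' and you have simply spelled out that transcription in detail, replacing the unframed correspondence by the framed/semistable one and noting that the left square of \eqref{corr:the one in KS for mod} is Cartesian while the right is handled by the same factorization $\overline\eta=(\overline\psi\times\id)\circ\gamma$. Your additional remarks on the Euler class $e(\gamma)$ being independent of the framing and on the Cartesianness surviving restriction to the semistable locus are valid sanity checks that the paper leaves implicit.
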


\begin{proof}
The proof is similar as the proof of Proposition \ref{prop:prep and crit}. 
\end{proof}

Recall that $m^{\prepr}(x\otimes y)=\overline \psi_{*}  \phi^{\sharp}_{\overline{\phi}} (x\otimes y)$, $x\in\calP_{v_1}$ and $y\in\calP_{v_2}$.
Proposition \ref{prop:prep and crit} compares the multiplications of $\calP(A, Q)$ and $\calH^{\aux}(\widehat{Q}, W, C)$, but it does not give an algebra homomorphism between the two algebras.
\Omit{
In Appendix~\ref{app:sign-twist}, we interpret Proposition~\ref{prop:prep and crit} and \ref{prop:prep and crit for action} in terms of an algebra homomorphism from $\calP(Q)$ to a sign-twisted version of $\calH^{\aux}(\widehat{Q}, W, C)$.
}

\section{Conclusions}\label{sec:concl}
In this section, we compare the preprojective CoHA of $Q$ and the critical CoHA defined by Kontsevich-Soibelman  associated to the quiver with potential $(\widehat{Q},W)$, hence prove Theorem \ref{ThmIntr:Crit}. By Theorem~\ref{prop: crit CoHA with the one in KS}, it suffices to compare the preprojective CoHA with $\calH^{\crit}(\widehat{Q}, W, C)=\calH^{\aux}(\widehat{Q}, W, C)$, with the multiplication $m^{\crit}= \overline{i_{2}}_* \circ \frac{1}{e(\iota)}\omega_{\overline{\omega}}^{\sharp} \circ \overline{i_{1}}_* \circ \overline{p_1}^{*}$.

In this section we take $A$ to be the Borel-Moore homology. In Remark~\ref{rem:BM} we comment on the generality in which the main theorem of this section holds.
\subsection{The main theorem}
Recall that in \eqref{corresp with fiber}, we have the following map
\[\iota: G\times_{P}(\bold{M}_{\overline{Q}, v_1}\times \bold{M}_{\overline{Q}, v_2})
\inj
G\times_{P}\bold{Y}. \] 
Let $e(\iota)$ be the equivariant Euler class of the normal bundle of $\iota$. By \cite[\S~3.2]{YZ15}, the normal bundle of $\iota$ is isomorphic to $T^*G/P$ as a bundle over the Grassmannian $G/P$, which equivariantly is $G\times_P(\fg_v/\fp)^*$. Here the $T$-action on $\bold{M}_{C,v}\cong \fg_v$ is specified in \S~\ref{preproj CoHA}. Recall that the normal bundle of $\gamma$ is the normal bundle of $\fp_{v_1, v_2}$ in $\fg_{v_1+v_2}$, with the $T$-action induced from that on $\fg$.
Therefore, we have
 \[e(\gamma)=\prod_{i\in I}(-1)^{v_1^iv_2^i}e(\iota) =(-1)^{v_1v_2}e(\iota), \]
where $(-1)^{v_1\cdot v_2}:=\prod_{i\in I}(-1)^{v_1^iv_2^i}$ for any $v_1$, $v_2\in\bbN^I$. 

We have the main theorem of this section.
\begin{theorem}\label{thm:KS_prep}
\begin{enumerate} 
\item
Under Assumption~\ref{Assu:WeghtsGeneral}, there is an isomorphism of $\bbN^I$-graded associative  algebras $\Xi: \calP(\BM, Q)\to\calH^{\crit}(\widehat{Q}, W)$ whose restriction to the degree-$v$ piece is
\[
\Xi_v: \calP(\BM, Q)_{v}\to  \calH^{\crit}(\widehat{Q}, W)_{v}, \,\ \,\
f\mapsto f\cdot (-1)^{{v}\choose{2}},
\] where $(-1)^{{v}\choose{2}}:=\prod_{i\in I}(-1)^{{v^i}\choose{2}}$.  Here we define ${n\choose 2}=0$ if $n=0, 1$. 
\item
In the setup of (1), 
we have
\[
a^{\crit}\big(\Xi_{v_1}(x)\big)\big(m\cdot (-1)^{{v_2}\choose{2}}\big)=\big(a^{\prepr}(x)(m)\big)\cdot (-1)^{{v_1+v_2}\choose{2}})
\]
for any $w,v_1,v_2\in\bbN^I$, $x\in \calP_{v_1}$, and $m\in H^{\BM}_{G_w\times T}(\mathfrak{M}(v_2,w))$.
\end{enumerate}
\end{theorem}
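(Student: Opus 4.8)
\emph{Proof strategy.}
The plan is to realize $\null^{\BM}\calP$ and $\scrH$ on one and the same $\bbN^I$-graded $\bbQ[t_1,t_2]$-module and to show that the two products differ, in each bidegree, only by an explicit scalar which is then absorbed by the sign twist. Since $\fR(J_W^C,v)=\mu_v^{-1}(0)\times\Rep(C,v)$ is a trivial $G_v\times T$-equivariant vector bundle over $\mu_v^{-1}(0)$ via the projection $\zeta$, the pullback $\zeta^{*}$ is an isomorphism of $\bbN^I$-graded $\bbQ[t_1,t_2]$-modules $\null^{\BM}\calP\isom\scrH$; this is the vector-space identification already noted in the Remark after Theorem~\ref{thm:crit CoHA action}. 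I would then set $\Xi_v:=(-1)^{\binom{v}{2}}\cdot\zeta^{*}$ (so that $\Xi_0=\id$), which is again a graded module isomorphism, and all that remains for part~(1) is to check that it intertwines $m^P$ with $m^{\crit}$.

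The geometric heart of the matter is the identity, valid after the identification $\zeta^{*}$,
\[
m^{\crit}(x\otimes y)=(-1)^{v_1\cdot v_2}\,m^{P}(x\otimes y),\qquad x\in\null^{\BM}\calP_{v_1},\ y\in\null^{\BM}\calP_{v_2}.
\]
By construction $m^{P}(x\otimes y)=\overline\psi_*\phi^{\sharp}(x\otimes y)$, while Proposition~\ref{prop:aux and crit}(1) gives $m^{\crit}(x\otimes y)=\overline\eta_*\bigl(\tfrac{1}{e(\iota)}\cdot\phi^{\sharp}(x\otimes y)\bigr)$ with the same refined Gysin pullback $\phi^{\sharp}$. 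The factorization $\overline\eta=(\overline\psi\times\id)\circ\gamma$ obtained in the proof of Proposition~\ref{prop:prep and crit} (diagram~\eqref{dia:gamma}), together with $\gamma_*(z)=z\cdot e(\gamma)$, shows that $\overline\eta_*(z)=\overline\psi_*\bigl(e(\gamma)\cdot z\bigr)$ after the relevant bundle identifications, hence $m^{\crit}(x\otimes y)=\overline\psi_*\bigl(\tfrac{e(\gamma)}{e(\iota)}\cdot\phi^{\sharp}(x\otimes y)\bigr)$. Under Assumption~\ref{Assu:T_main} the Euler-class computation recorded just before the statement gives $e(\gamma)=(-1)^{v_1\cdot v_2}e(\iota)$, so $e(\gamma)/e(\iota)=(-1)^{v_1\cdot v_2}$ is a scalar and passes through $\overline\psi_*$; this yields the displayed identity (and, incidentally, confirms that $m^{\crit}$ is defined integrally in this case). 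Part~(1) is then pure bookkeeping: from $\binom{a+b}{2}\equiv\binom{a}{2}+\binom{b}{2}+ab\pmod 2$, applied componentwise, one gets $(-1)^{\binom{v_1+v_2}{2}}=(-1)^{\binom{v_1}{2}}(-1)^{\binom{v_2}{2}}(-1)^{v_1\cdot v_2}$, whence
\[
m^{\crit}\bigl(\Xi_{v_1}(x)\otimes\Xi_{v_2}(y)\bigr)=(-1)^{\binom{v_1}{2}+\binom{v_2}{2}}\,m^{\crit}(x\otimes y)=(-1)^{\binom{v_1+v_2}{2}}\,m^{P}(x\otimes y)=\Xi_{v_1+v_2}\bigl(m^{P}(x\otimes y)\bigr);
\]
since each $\Xi_v$ is bijective and degree-preserving, $\Xi$ is an isomorphism of $\bbN^I$-graded algebras.

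For part~(2) I would run the identical argument for the module $\calM(w)$, which is literally the same graded space $\bigoplus_v H^{\BM}_{G_w\times T}(\mathfrak{M}(v,w))$ for both actions. Here $a^{\prepr}(m\otimes x)=\overline\psi_*\phi^{\sharp}(m\otimes x)$ by construction, Proposition~\ref{prop:aux and crit}(2) gives $a^{\crit}(m\otimes x)=\overline\eta_*\bigl(\tfrac{1}{e(\iota)}\cdot\phi^{\sharp}(m\otimes x)\bigr)$, and the proof of Proposition~\ref{prop:prep and crit for action} supplies the same factorization $\overline\eta_*=\overline\psi_*\circ(e(\gamma)\cdot)$; since the extra directions $\Rep(C,v_1),\Rep(C,v_2)$ involve only the vertices $i\in I$ and carry $T$-weight $-t_1-t_2$ under Assumption~\ref{Assu:T_main}, the relevant $\gamma$-quotient bundle again has Euler class $(-1)^{v_1\cdot v_2}e(\iota)$, with $v_1=\deg x$, $v_2=\deg m$, and $v_1\cdot v_2=\sum_{i\in I}v_1^iv_2^i$. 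Therefore $a^{\crit}(x)(m)=(-1)^{v_1\cdot v_2}\,a^{\prepr}(x)(m)$, and inserting the sign twists,
\[
a^{\crit}\bigl(\Xi_{v_1}(x)\bigr)\bigl(m\cdot(-1)^{\binom{v_2}{2}}\bigr)=(-1)^{\binom{v_1}{2}+\binom{v_2}{2}+v_1\cdot v_2}\,a^{\prepr}(x)(m)=\bigl(a^{\prepr}(x)(m)\bigr)\cdot(-1)^{\binom{v_1+v_2}{2}},
\]
as claimed.

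The step I expect to cost the most care --- though it is routine rather than deep --- is making ``after the identification $\zeta^{*}$'' precise: one must verify that under $\zeta^{*}$ (and the induced bundle identifications on $Z_G$ and on the middle terms of the correspondences) the operators $\phi^{\sharp}$ and $\overline\psi_*$ appearing in Propositions~\ref{prop:prep and crit}, \ref{prop:prep and crit for action} and \ref{prop:aux and crit} are exactly those used to build $m^{P}$ and $a^{\prepr}$, that the degree shifts introduced on the various pieces of the correspondences by these bundle pullbacks are mutually consistent, and that a scalar commutes past a proper pushforward. Once this compatibility is in place, everything else reduces to the Euler-class computation already in hand together with the elementary congruence $\binom{a+b}{2}\equiv\binom{a}{2}+\binom{b}{2}+ab\pmod 2$.
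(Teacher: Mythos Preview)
Your proposal is correct and follows essentially the same route as the paper, which simply states that the theorem is obtained by ``summarizing Propositions~\ref{prop:prep and crit}, \ref{prop:prep and crit for action}, and \ref{prop:aux and crit}.'' You have unpacked precisely what that summary means: combine the factorization $\overline\eta_*=\overline\psi_*\circ(e(\gamma)\cdot{-})$ from the proof of Proposition~\ref{prop:prep and crit} with the expression $m^{\crit}=\overline\eta_*\bigl(\tfrac{1}{e(\iota)}\phi^\sharp\bigr)$ from Proposition~\ref{prop:aux and crit}, invoke the Euler-class identity $e(\gamma)=(-1)^{v_1\cdot v_2}e(\iota)$ valid under Assumption~\ref{Assu:T_main}, and finish with the binomial congruence $\binom{a+b}{2}\equiv\binom{a}{2}+\binom{b}{2}+ab\pmod 2$; the module statement runs identically via Propositions~\ref{prop:prep and crit for action} and~\ref{prop:aux and crit}(2). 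Your caveat about tracking the bundle identifications $\zeta^*$ carefully is well placed but, as you note, routine.
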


\begin{proof}
We only prove (1). Part (2) follows from a similar argument. By  \cite[Corollary A.9]{D} (recalled below as Theorem~A.1), $\calP(\BM, Q)$ is isomorphic to $\calH^{\crit}(\widehat{Q}, W)$ as abelian groups. It suffices to show the map $\Xi_v$ respects the multiplication structure. 

For $x\in\calP_{v_1}$ and $y\in\calP_{v_2}$, by definition, $m^{\prepr}(x\otimes y)=\overline \psi_{*}  \phi^{\sharp}_{\overline{\phi}} (x\otimes y)$, where $\phi$ and  $\overline \psi$ are as in \S~\ref{subsec:HallMulti} \eqref{diag:Lag}. On $\calH^{\aux}(\widehat{Q}, W, C)=\calP(\BM,Q)$,  
by Proposition \ref{prop:prep and crit} we have
\[
m^{\aux}(x\otimes y)=\overline \psi_{*} \Big(e(\gamma) \cdot \phi^{\sharp}_{\overline{\phi}} (x\otimes y)\Big)=
(-1)^{v_1v_2} \overline \psi_{*} \Big(e(\iota) \cdot \phi^{\sharp}_{\overline{\phi}} (x\otimes y)\Big)
.\]
On the other hand, on $\calH^{\crit}(\widehat{Q}, W, C)=\calH^{\aux}(\widehat{Q}, W, C)$, by Theorem \ref{prop: crit CoHA with the one in KS} we have
$m^{\crit}=\overline{i_{2}}_* \circ \frac{1}{e(\iota)}\omega_{\overline{\omega}}^{\sharp} \circ  \overline{i_{1}}_* \circ \overline{p_1}^{*}$, while $m^{\aux}=\overline{i_{2}}_* \circ \omega_{\overline{\omega}}^{\sharp} \circ  \overline{i_{1}}_* \circ \overline{p_1}^{*}$. Therefore,  $m^{\prepr}(x\otimes y)= (-1)^{v_1v_2} m^{\crit} (x\otimes y)$. 

Now we verify that the isomorphism of vector spaces $\Xi: (\calP(\BM, Q), m^{\prepr})\to (\calH^{\crit}(\widehat{Q}, W, C), m^{\crit})$ is an algebra homomorphism. Indeed, on the one hand, we have 
\[
\Xi( m^{\prepr}(x\otimes y))= \Xi((-1)^{v_1v_2} m^{\crit} (x\otimes y))=(-1)^{ {v_1+v_2}\choose {2}}(-1)^{v_1v_2} m^{\crit} (x\otimes y).\] On the other hand, we have
\[
m^{\crit}( \Xi(x)\otimes \Xi(y))
=m^{\crit}( (-1)^{{v_1}\choose{2} }(x)\otimes (-1)^{{v_2}\choose{2} }(y))
=(-1)^{{{v_1}\choose{2}}} (-1)^{{v_2}\choose{2}} m^{\crit}(x\otimes y). \]
The equality ${v_1^i \choose 2}+{v_2^i \choose 2}+v_1^i v_2^i={v_1^i+v_2^i\choose 2}$, for any $i\in I$, shows $\Xi( m^{\prepr}(x\otimes y))=m^{\crit}( \Xi(x)\otimes \Xi(y))$. Thus, $\Xi$ is an algebra homomorphism. 
\end{proof}

\Omit{
Theorem~\ref{prop: crit CoHA with the one in KS} and the shuffle formula \eqref{equ:crit shuffle} give the following shuffle description of $(\calH^{\crit}(\widehat{Q}, W), m^{\crit})$.
Let $\calS\calH^{\crit}$ be the critical shuffle algebra, which is isomorphic to the shuffle algebra 
$\calS\calH$ as abelian groups. 
For $f_1\in \calS \calH_{v_1}^{\crit}$ and $f_2\in \calS\calH_{v_2}^{\crit}$, the multiplication $m_{v_1,v_2}^{\crit}(f_1\otimes f_2)$ is given by the shuffle formula
\begin{equation}
\sum_{\sigma\in \Sh(v_1,v_2)}\sigma\big(f_1(\lambda'^i_s)\cdot f_2(\lambda''^j_t )\cdot \fac_1\cdot \fac_2 \cdot(-1)^{v_1\cdot v_2}\big), \end{equation}
where $\fac_1, \fac_2$ are given by \eqref{equ:fac1} and \eqref{equ:fac2}.

Theorem \ref{prop: crit CoHA with the one in KS} and Corollary~\ref{cor:aux_shuffle} imply the following.
\begin{corollary}
Under Assumption~\ref{Assu:WeghtsGeneral}, there is an algebra homomorphism from  $\calH^{\crit}(\widehat{Q}, W)$ to  $\calS\calH^{\crit}$.
\end{corollary}

In \cite{YZ15}, we proved that there is an algebra homomorphism 
$\Upsilon: Y_\hbar^+(\fg_Q)\to \widetilde{\calP}(\BM, Q)$, as reviewed in detail in \S\ref{sec:Yangian}. As a consequence of Theorem \ref{thm:KS_prep}, we have
\begin{corollary}\label{cor:Yangian}
Assume $Q$ is a quiver without edge-loops, the $T$-action satisfies Assumption~\ref{Assu:WeghtsGeneral},  and the pair $m_h,m_{h^*}$ is as in Example \ref{ex:two t and hbar}. Then, there is a surjective algebra homomorphism $Y_\hbar^+(\fg_Q)\to \underline{\widetilde{\calH}}^{\crit,\sph}(\widehat{Q}, W)$. It is injective when $Q$ is of type $A,D,E$.
\end{corollary}
Here the target is the spherical subalgebra of the twist of $\calH^{\crit}(\widehat{Q}, W)$ as in  \S\ref{sec:Yangian}, quotient by the $H^{\BM}_T(\pt)$-torsion. 
Again in future investigation we expect to show that this map is an isomorphism for a more general class of quivers including the affine case.
}
\begin{remark}\label{rem:BM}
Let $A$ be an arbitrary oriented cohomology theory. Let $(R,F)$ be the formal group law associated to $A$. Then, Theorem~\ref{thm:KS_prep} holds as long as $F$ is an odd function in the sense that $x-_Fy=-(y-_Fx)$. 
\end{remark}
\Omit{
\subsection{Examples}
We illustrate Theorem~\ref{thm:KS_prep} by some examples.
\begin{example}\label{ex:HilbC3}
We keep notations as in Examples \ref{exam:GuinAlg} and \ref{ex:Q heart}.  Let
$Q$ be the Jordan quiver.  Then $\widehat{Q}$ is the quiver with one vertex, and three arrows $l, x$ and $x^*$. 
Let $W:=l\cdot [x, x^*]$ be the potential defined as in Example \ref{exam:GuinAlg}. 
We denote the critical CoHA by $\calH^{\crit}(\widehat{Q}, W)$. 

Take the framing $w$ to be $1$ and  the potential $W^{\heartsuit}=l\cdot ([x, x^*]+i\circ j)$ of $\widehat{Q}^{\spadesuit}$ as in Example \ref{ex:Q heart}. 
Consider the open subset $\bold{M}^s_{\widehat{Q}^{\spadesuit},n}\subseteq \bold{M}_{\widehat{Q}^{\spadesuit},n}$ consisting of those representations for which the vector $\Image(i)$ is a cyclic vector under the operators $l, x, x^*$.
Note that the critical locus $\Crit(\tr W^{\heartsuit})\cap \bold{M}^s_{\widehat{Q}^{\spadesuit},n}/\GL_n$ is isomorphic to $\Hilb^{n}(\C^3)$. In particular,  
the construction in \cite{KS} gives an action of 
$\calH^{\crit}(\widehat{Q}, W)$ on  
\[
H(\Hilb(\bbC^3)):=\bigoplus_{n\in \N}H^*_{c,T}(\Hilb^{n}(\C^3), \varphi_{\tr W^{\heartsuit}})^\vee.\]

We take a further open subset $\bold{M}^o_{\widehat{Q}^{\spadesuit},n}\subseteq \bold{M}^s_{\widehat{Q}^{\spadesuit},n}$ consisting of those representations for which $\Image(i)$ is a cyclic vector under the operators $x, x^*$. The critical locus $\Crit(\tr W^{\heartsuit})\cap\bold{M}^o_{\widehat{Q}^{\spadesuit},n}/\GL_n$ is an open subset of $\Hilb^{n}(\C^3)$.  Define \[
H^o(\Hilb(\bbC^3)):=\bigoplus_{n\in \N}H^*_{c,T\times\GL_n}( \bold{M}^o_{\widehat{Q}^{\spadesuit},n},\varphi_{\tr W^{\heartsuit}})^\vee.\]
By \cite[Theorem~A.1]{D}, one has $H^o(\Hilb(\bbC^3))\cong \bigoplus_{n\in \N}H^*_{c,T\times\GL_n}(
\bold{J}^s_{\overline{Q^{\heartsuit}},n })^\vee$, which is, furthermore, isomorphic to $H_{c,T}^*(\Hilb(\C^2), \Q)^\vee$, since $\bold{J}^s_{\overline{Q^{\heartsuit}},n  }/\GL_n$ is a vector bundle over $\Hilb^{n}(\C^2)$. Restriction to open subset induces a map $H(\Hilb(\bbC^3))\to H^o(\Hilb(\bbC^3))$. By Theorem \ref{thm:crit CoHA action}, the action of $\calH^{\crit}(\widehat{Q}, W)$ on $H(\Hilb(\bbC^3))$  induces an action on $H^o(\Hilb(\bbC^3))$ and the restriction $H(\Hilb(\bbC^3))\to H^o(\Hilb(\bbC^3))$ is a module homomorphism.

Since the action of $\calH^{\crit}(\widehat{Q}, W)$ on $H^o(\Hilb(\bbC^3))$ amounts to "adding points" in $\bbC^3$, while the $\calP(Q)$ action on $H_{c,T}^*(\Hilb(\C^2), \Q)^\vee$ amounts "adding points" in $\bbC^2$, {\it a priori} there is no expected relation between these two actions. However, Theorem \ref{thm:KS_prep} gives a comparison between the action of $\calH^{\crit}(\widehat{Q}, W)$ on $H^o(\Hilb(\bbC^3))$ and that of $\calP(Q)$ on $H_{c,T}^*(\Hilb(\C^2), \Q)$. 
\end{example}

Motivated by the study of geometric engineering (see, e.g., \cite{Sz}), Soibelman in \cite[\S~1.4c)]{S14} proposed the problem of finding the relation between operators in the critical CoHA acting on the Pandharipande-Thomas moduli of resolved conifold $X=\calO_{\bbP^1}(-1) \bigoplus  \calO_{\bbP^1}(-1)$ and the Nakajima-type operators on $\coprod_n\Hilb^n(\C^2)$.

\begin{example}\label{Ex:ResolvConifold}
Let $\Gamma$ be the quiver with vertex set $\{x_0, x_1\}$ and arrow set $\{a_{01},b_{01},a_{10},b_{10}\}$, with edges labeled $ij$ pointing from vertex $x_i$ to vertex $x_j$. Consider the Klebanov-Witten potential $$W' = a_{01}a_{10}b_{01}b_{10}-a_{01}b_{10}b_{01}a_{10}.$$
Let $\calH^{\crit}(\Gamma, W'):=\bigoplus_{n\in\bbN}H^*_{c,T\times \GL_n}(\bold{M}_{\Gamma,n,n},\varphi_{\tr W'})^\vee$, where the dimension vector $(n, n)$ has dimension $n$ at both vertices $x_0$ and $x_1$.
The construction of \cite{KS} endows $\calH^{\crit}(\Gamma, W')$ with an algebra structure.

Let $\Gamma^{\heartsuit}$ be the quiver that obtained from $\Gamma$ by adding one vertex $x_{\infty}$, and one arrow $i$ from $x_{\infty}$ to $x_0$. 
Consider the same potential $W'$ on the new quiver $\Gamma^{\heartsuit}$. 
Consider the dimension vector $(n,n, 1)$  having dimension 1 at $x_{\infty}$ and dimension $n$ at both $x_0$ and $x_1$.
The GIT quotient of $\Crit(\tr W')\cap \bold{M}_{\Gamma^{\heartsuit}, (n,n, 1)}$ for suitable stability condition is isomorphic to the Pandharipande-Thomas moduli space of stable pairs on the resolved conifold $X$ \cite[\S 4]{NN11} (see also \cite[\S 3]{Sz}). 
However, as the critical CoHA naturally acts on the cohomology of the DT-moduli of $X$, we will consider the DT-moduli instead of the PT-moduli.

Let $\bold{M}^s_{\Gamma^{\heartsuit},n, n, 1}$ be the open subset consisting of those representations for which $\Image(i)$ is a cyclic vector under arrows of $\Gamma$.
The construction of \cite{KS} gives an action of $\calH^{\crit}(\Gamma, W')$ on $H(\Hilb(X)):=\bigoplus_{n\in\bbN}H^*_{c,T\times \GL_n}(\bold{M}^s_{\Gamma^{\heartsuit},(n,n, 1)},\varphi_{\tr W})^\vee.$

Let $\bold{M}^{o}_{\Gamma, n, n}\subset \bold{M}_{\Gamma, n, n}$ and $\bold{M}^{o}_{\Gamma^\heartsuit, n, n, 1}\subset \bold{M}^{s}_{\Gamma^\heartsuit, (n, n, 1)}$ be the open subsets consisting of those representations for which the linear map $a_{10}$ is an isomorphism. 
Note that $\bold{M}^{o}_{\Gamma, n, n}\cong \bold{M}_{\widehat{Q},n}$ and $\bold{M}^{o}_{\Gamma^\heartsuit, (n, n, 1)}\cong \bold{M}^s_{\widehat{Q}^{\spadesuit},n}$, where $Q$ is as in Example~\ref{ex:HilbC3}. Furthermore, the potential $W'$ becomes $W$ and $W^{\spadesuit}$ on $\bold{M}^{o}_{\Gamma, (n, n)}$ and  $\bold{M}^{o}_{\Gamma^\heartsuit, (n, n, 1)}$ respectively. Therefore, restriction to open subsets induces  natural maps
$\calH^{\crit}(\Gamma, W')\to \calH^{\crit}(\widehat{Q}, W)$ which is an algebra homomorphism,  and $H(\Hilb(X)) \to H(\Hilb(\C^3))$ which is compatible with the actions of the two critical CoHAs. 

The relation between the $\calH^{\crit}(\widehat{Q}, W)$-action on $H(\Hilb(\bbC^3))$ and the $\calP(Q)$-action on $H_{c,T}^*(\Hilb(\C^2))$ has already been explained in Example \ref{ex:HilbC3}, where the latter action is via Nakajima operators (see \cite[Theorem~5.6]{YZ15}). Indirectly, we have a relation between the latter and the $\calH^{\crit}(\Gamma, W')$-action on $H(\Hilb(X))$. 
\end{example}
}

\appendix
\Omit{
\section{The preprojective CoHA and the auxiliary CoHA}
\label{app:sign-twist}
In this section, we interpret Proposition~\ref{prop:prep and crit} and \ref{prop:prep and crit for action} in terms of an algebra homomorphism from $\calP(\BM, Q)$ to $\calH^{\aux, \sE}(\widehat{Q}, W, C)$,  a twist of $\calH^{\aux}(\widehat{Q}, W, C)$ by the sign representation of the symmetric group.
We work in the setup of \S~\ref{sec:aux CoHA} with $A=H^{\BM}$, and under the following assumption.
\begin{assumption}\label{Assu:T_app}
Assume the linear $T$-action on $\bold{\Lambda}_{v}\times\fg_v$ is such that the action on $\bold{\Lambda}_{v}$ satisfies Assumption~\ref{Assu:WeghtsGeneral}, and the action on $\bold{M}_{C,v}\cong \fg_v$ is trivial. 
\end{assumption}
\subsection{The sign-twist functor}\label{subsec:SignTwist}
We have two shuffle algebras associated to $\calP(\BM, Q)$ and 
$\calH^{\aux}(\widehat{Q}, W, C)$ respectively. To emphasis the dependence of the CoHA's, we write $ \calS\calH^{\prepr}$ for the shuffle algebra $\calS\calH$ in \S\ref{sec:shuffle prep}, and $\calS\calH^{\aux}$ the shuffle algebra of $\calH^{\aux}(\widehat{Q}, W, C)$, with multiplication \eqref{equ:crit shuffle}.

We twist the auxiliary shuffle algebra 
$\calS\calH^{\aux}=\bigoplus_{v\in \N^I} \calS\calH^{\aux}_{v}$ by the sign representation as follows.
Let $ \sE_n$ be the sign representation of the symmetric group $\fS_n$.
For a dimension vector $v\in \N^I$, let $ \sE_v=\otimes_{i\in I} \sE_{v^i}$ be the sign representation of $\fS_v=\prod_{i\in I} \fS_{v^i}$. For each $v\in\bbN^I$, define
\[
\calS\calH^{\aux,  \sE}_{v}:=
(R[\![t_1, t_2 ]\!][\![\lambda^i_s]\!]_{i\in I, s=1,\dots, v^i}\otimes \sE_v)^{\fS_v}.
\] Therefore, for $f\in \calS\calH^{\aux,  \sE}_{v}$,
for any $\sigma\in \fS_v$, we have $\sigma(f)=\sign(\sigma) f$. In other words, $\calS\calH^{\aux,  \sE}_{v}$ consists of skew-symmetric polynomials.

Define the skew auxiliary shuffle algebra to
be $\calS\calH^{\aux, \sE}=\bigoplus_{v\in \N^I} \calS\calH^{\aux, \sE}_{v}$ as an abelian group. 
The multiplication of $\calS\calH^{\aux, \sE}$ is given by a twisted version of the formula \eqref{equ:crit shuffle}
\begin{equation}\label{twist by sign rep}
m_{v_1,v_2}^{\aux, \sE}(f_1\otimes f_2)=
\sum_{\sigma\in \Sh(v_1,v_2)}
\sign(\sigma) \sigma\Big(f_1\cdot f_2 \cdot \fac_1\cdot \fac_2 \cdot\prod_{
{\begin{smallmatrix}
\alpha\in I, i\in[1,v_1^\alpha],\\
j\in[v_1^\alpha+1,v_1^\alpha+ v_2^\alpha]\end{smallmatrix}}
}(\lambda_j^{\alpha}-\lambda_{i}^{\alpha}) \Big), 
\end{equation}
where $\sigma\in \Sh(v_1,v_2)$ is the shuffle of the variables $ (\lambda'^i_s)_{i\in I,s=1,\dots,v^i_1}$ and $(\lambda''^j_t)_{j\in I,t=1,\dots,v^j_2}$, and $\fac_1, \fac_2$ are given by \eqref{equ:fac1} and \eqref{equ:fac2}.

Let $e(\fn_v):=\prod_{\alpha\in I, 1\leq i<j\leq v^\alpha}(\lambda_j^{\alpha}-\lambda_{i}^{\alpha})$. Note that we have
\[
\sigma (e(\fn_v))=\sign(\sigma) e(\fn_v), \,\ \text{for $\sigma\in \fS_{v}$}.
\]
\begin{prop}\label{prop:map of shuffles}
There is an algebra homomorphism 
$\Xi: \calS\calH^{\prepr}\to \calS\calH^{\aux, \sE}$, given by 
\[
\Xi_v:  \calS\calH^{\prepr}_v\to  
\calS\calH^{\aux, \sE}_v,
\,\ f\mapsto f\cdot e(\fn_v).
\]
\end{prop}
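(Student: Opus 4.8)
The plan is to verify directly, from the explicit shuffle formulas \eqref{shuffle formula} and \eqref{twist by sign rep}, that each $\Xi_v$ is a well-defined map into $\calS\calH^{\aux,\sE}_v$ and that the collection $\Xi=(\Xi_v)_{v\in\bbN^I}$ intertwines the two products. For well-definedness, note that if $f\in\calS\calH^{\prepr}_v$ is $\fS_v$-symmetric, then $f\cdot e(\fn_v)$ is skew-symmetric, because $e(\fn_v)=\prod_{\alpha\in I,\,1\le i<j\le v^\alpha}(\lambda^\alpha_j-\lambda^\alpha_i)$ satisfies $\sigma(e(\fn_v))=\sign(\sigma)\,e(\fn_v)$ for every $\sigma\in\fS_v$; hence $f\cdot e(\fn_v)\in\calS\calH^{\aux,\sE}_v$. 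The same remark shows $\Xi_v$ is injective, since $e(\fn_v)$ is a non-zero-divisor in the power series ring; by the classical description of alternating polynomials it is in fact a bijection, although we will not need this.

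The core of the proof is the identity $\Xi_{v_1+v_2}\bigl(m^{\prepr}_{v_1,v_2}(f_1\otimes f_2)\bigr)=m^{\aux,\sE}_{v_1,v_2}\bigl(\Xi_{v_1}(f_1)\otimes\Xi_{v_2}(f_2)\bigr)$ for $f_1\in\calS\calH^{\prepr}_{v_1}$ and $f_2\in\calS\calH^{\prepr}_{v_2}$, where $m^{\prepr}_{v_1,v_2}$ denotes the product \eqref{shuffle formula}. First I would record the key factorization: under the standard embedding that sends $\lambda'^\alpha_i\mapsto\lambda^\alpha_i$ and $\lambda''^\alpha_t\mapsto\lambda^\alpha_{t+v_1^\alpha}$, the image of $e(\fn_{v_1})$ (in the $\lambda'$'s) times the image of $e(\fn_{v_2})$ (in the $\lambda''$'s) times the extra factor $\prod_{\alpha\in I,\,i\in[1,v_1^\alpha],\,j\in[v_1^\alpha+1,v_1^\alpha+v_2^\alpha]}(\lambda^\alpha_j-\lambda^\alpha_i)$ appearing in \eqref{twist by sign rep} is exactly $e(\fn_{v_1+v_2})$, since the three factors account respectively for the pairs $i<j$ with both indices in the first block, both in the second block, and one in each, all written as (larger index) minus (smaller index). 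Substituting $\Xi_{v_k}(f_k)=f_k\cdot e(\fn_{v_k})$ into \eqref{twist by sign rep} therefore gives
\[
m^{\aux,\sE}_{v_1,v_2}\bigl(\Xi_{v_1}(f_1)\otimes\Xi_{v_2}(f_2)\bigr)=\sum_{\sigma\in\Sh(v_1,v_2)}\sign(\sigma)\,\sigma\bigl(f_1\cdot f_2\cdot\fac_1\cdot\fac_2\cdot e(\fn_{v_1+v_2})\bigr).
\]
Since $e(\fn_{v_1+v_2})$ is skew for the full group $\fS_{v_1+v_2}$ and every shuffle $\sigma\in\Sh(v_1,v_2)$ lies in $\fS_{v_1+v_2}$, one has $\sigma\bigl(e(\fn_{v_1+v_2})\bigr)=\sign(\sigma)\,e(\fn_{v_1+v_2})$, so the leading factor $\sign(\sigma)$ cancels this sign and $e(\fn_{v_1+v_2})$ can be pulled out of the sum. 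What remains is $e(\fn_{v_1+v_2})\cdot\sum_{\sigma\in\Sh(v_1,v_2)}\sigma(f_1\cdot f_2\cdot\fac_1\cdot\fac_2)$, which by \eqref{shuffle formula} equals $e(\fn_{v_1+v_2})\cdot m^{\prepr}_{v_1,v_2}(f_1\otimes f_2)=\Xi_{v_1+v_2}\bigl(m^{\prepr}_{v_1,v_2}(f_1\otimes f_2)\bigr)$.

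Finally, $\Xi$ respects the $\bbN^I$-grading by construction, and associativity is automatic once multiplicativity is known: both $\calS\calH^{\prepr}$ and $\calS\calH^{\aux,\sE}$ are already associative algebras and a degreewise injection intertwining the two products is an algebra homomorphism. The only place that needs care --- and the step I expect to be the genuine, if modest, obstacle --- is the index bookkeeping in the factorization: one must be precise about which block each variable $\lambda^\alpha_i$ belongs to after the relabeling $\lambda''^\alpha_t\mapsto\lambda^\alpha_{t+v_1^\alpha}$, so that the cross factor in \eqref{twist by sign rep} is correctly matched with the off-diagonal part of $e(\fn_{v_1+v_2})$ and the signs $\sign(\sigma)$ are tracked consistently throughout. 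No difficulty beyond this routine verification is anticipated.
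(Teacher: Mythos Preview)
Your proposal is correct and follows essentially the same route as the paper: both arguments hinge on the factorization $e(\fn_{v_1})\cdot e(\fn_{v_2})\cdot\prod_{\alpha,i,j}(\lambda_j^\alpha-\lambda_i^\alpha)=e(\fn_{v_1+v_2})$ and the skewness relation $\sigma(e(\fn_{v_1+v_2}))=\sign(\sigma)\,e(\fn_{v_1+v_2})$ to cancel the sign in \eqref{twist by sign rep} and recover the preprojective shuffle sum. Your version is somewhat more explicit about well-definedness and the index bookkeeping, but the computation is identical to the paper's.
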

\begin{proof}
It suffices to show that
\[
m^{\aux, \sE}(f_1 \cdot e(\fn_{v_1})\otimes f_2 \cdot e(\fn_{v_2}))
=m^{\prepr}(f_1\otimes f_2) \cdot e(\fn_{v_1+v_2}).
\]
It follows from the following equality
\begin{align*}
&m^{\aux, \sE}(f_1 \cdot e(\fn_{v_1})\otimes f_2 \cdot e(\fn_{v_2}))\\
=&\sum_{\sigma\in \Sh(v_1,v_2)}
\sign(\sigma) \sigma \Big(f_1\cdot e(\fn_{v_1})  f_2 \cdot e(\fn_{v_2}) \fac_1 \fac_2 
\prod_{{\begin{smallmatrix}
\alpha\in I, i\in[1,v_1^\alpha],\\
j\in[v_1^\alpha+1,v_1^\alpha+ v_2^\alpha]
\end{smallmatrix}}
}(\lambda_j^{\alpha}-\lambda_{i}^{\alpha})\Big)
\\
=&\sum_{\sigma\in \Sh(v_1,v_2)}
\sign(\sigma) \sigma \big(
f_1 f_2 \fac_1 \fac_2\cdot e(\fn_{v_1+v_2})\big)
\\
=&\sum_{\sigma\in \Sh(v_1,v_2)}
\sigma (
f_1 f_2 \fac_1 \fac_2)\cdot  e(\fn_{v_1+v_2})
=m^{\prepr}(f_1\otimes f_2) \cdot e(\fn_{v_1+v_2}).
\end{align*}
\end{proof}

The twist $\calS\calH^{\aux,\sE}$ obtained from $\calS\calH^{\aux}$ 
has the effect of twisting the space of symmetric polynomials to the space of skew-symmetric polynomials. 
Next, we show that this assignment is a functor, called the sign-twist functor. 
This is a geometric interpretation of the formula \eqref{twist by sign rep} and Proposition \ref{prop:map of shuffles}.
Readers only interested in the combinatorial aspects of the shuffle algebra can skip the rest of \S~\ref{subsec:SignTwist}.

Let $S=R[\![t_1, t_2 ]\!][\![\lambda_1, \dots, \lambda_n]\!]$ be the polynomial ring with $n$ variables.  Let $S\rtimes \fS_n$-$\modu$ be the category of finitely generated modules of $S\rtimes \fS_n$. The projective objects in $S\rtimes \fS_n$-$\modu$ are $
\{P_{\eta}:=S \otimes \eta\}$,  where $\eta$ runs over all irreducible representation of $\fS_n$.
Let
\[e_{\triv}:=\sum_{\sigma\in \fS_n} \frac{\sigma}{n!}\,\
\text{and }\,\
e_{\sign}:=\sum_{\sigma\in \fS_n} \sign(\sigma)\frac{\sigma}{n!}
\]
be the two idempotent elements. 
In particular, we have  $P_{\triv}=(S \rtimes \fS_n)e_{\triv}$ and  $P_{\sign}=(S \rtimes \fS_n)e_{\sign}$ in $S\rtimes \fS_n$-$\modu$. 
We also have the following isomorphism 
\[
S^{\fS_n} \cong e_{\triv}(S\rtimes \fS_n) e_{\triv}, \,\ a\mapsto ae_{\triv}.\]  
For simplicity, we write $
\Lambda:=\End(P_{\sign})=e_{\sign}(S\rtimes \fS_n) e_{\sign}.$
Consider the following two pairs of adjunctions
\[\xymatrix@R=1.5em{
S^{\fS_n}\text{-mod} \ar@/^1pc/[r]^{\calF} &
S\rtimes \fS_n\text{-mod} 
\ar@/^1pc/[l]^{\calG} 
\ar@/^1.1pc/[r]^{\calG'} 
&
\Lambda\text{-mod} 
\ar@/^1.1pc/[l]^{\calF'} ,
}\]
where
\begin{align*}
 &\calF: M \mapsto P_{\triv}\otimes_{S^{\fS_n}} M 
  &&\calG: N \mapsto \Hom_{S\rtimes \fS_n}(P_{\triv}, N)\\
  &\calF': M \mapsto P_{\sign}\otimes_{\Lambda} M
   &&\calG': N \mapsto \Hom_{S\rtimes \fS_n}(P_{\sign}, N).
\end{align*}

More generally, we consider $W_G=\fS_{v}:=\prod_{i\in I} \fS_{v^i}$, and the subgroup $W_P=\fS_{v_1} \times \fS_{v_2}$, for $v=v_1+v_2$. Without causing confusion,  the trivial and sign idempotent of $W_G$ will also be denoted by $e_{\triv}$ and $e_{\sign}$, and the two corresponding projective objects by ${P}_{\triv}$ and ${P}_{\sign}$. We have the following diagram with  functors among the module categories involve $W_G$ and those involve $W_P$.  (For the notation, we add a tilde for objects involve $W_P$. )
\begin{equation}\label{dia}
\xymatrix@R=1em{
S^{W_P}\text{-mod} \ar[dd]_{\Res}&
S\rtimes W_P \text{-mod} \ar[dd]^{\Ind_{W_P}^{W_G}}
\ar[l]_{\widetilde{\calG} }
\ar[r]^{\widetilde{\calG'} }
&
\widetilde\Lambda\text{-mod} \ar[dd]^{\Res}
\\
&&\\
S^{W_G}\text{-mod} &
S\rtimes W_G\text{-mod} 
\ar[l]_{\calG} 
\ar[r]^{\calG'} 
&
\Lambda\text{-mod}.
}\end{equation}
\begin{lemma}\label{lem:sign commutes}
 Diagram \eqref{dia} commutes. Similar for the diagram with all the arrows replaced by their adjunctions. 
\end{lemma}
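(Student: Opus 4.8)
The plan is to reduce Lemma~\ref{lem:sign commutes} to Frobenius reciprocity for the finite-index inclusion $W_P\subset W_G$, after rewriting every functor as an idempotent truncation. The only genuinely computational input is an identification of two restricted projective modules; everything else is formal.

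First I would record concrete descriptions of the functors. Write $e^G_{\triv},e^G_{\sign}$ for the trivial and sign idempotents of $W_G$, and $e^P_{\triv},e^P_{\sign}$ for those of $W_P$, so that $P_{\triv}=(S\rtimes W_G)e^G_{\triv}$, $P_{\sign}=(S\rtimes W_G)e^G_{\sign}$, $\widetilde{P}_{\triv}=(S\rtimes W_P)e^P_{\triv}$, $\widetilde{P}_{\sign}=(S\rtimes W_P)e^P_{\sign}$. Then $\calG(M)=\Hom_{S\rtimes W_G}(P_{\triv},M)=e^G_{\triv}M$ and $\calG'(M)=e^G_{\sign}M$, regarded as modules over $S^{W_G}\cong e^G_{\triv}(S\rtimes W_G)e^G_{\triv}$ and over $\Lambda=e^G_{\sign}(S\rtimes W_G)e^G_{\sign}$ respectively, and likewise $\widetilde{\calG}(N)=e^P_{\triv}N$, $\widetilde{\calG'}(N)=e^P_{\sign}N$. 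The left-hand vertical $\Res$ is restriction of scalars along $S^{W_G}\hookrightarrow S^{W_P}$. For the right-hand vertical, a short computation gives $e^G_{\sign}=e^P_{\sign}e^G_{\sign}=e^G_{\sign}e^P_{\sign}$ (the sign character of $W_G$ restricts to that of $W_P$), so $e^G_{\sign}$ is a subidempotent of $e^P_{\sign}$ and there is an induced ring homomorphism $\Lambda\to\widetilde\Lambda$; the functor $\Res\colon\widetilde\Lambda\text{-mod}\to\Lambda\text{-mod}$ is restriction of scalars along it.

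The one genuine computation is the identification of the restricted projectives. From $\sigma e^G_{\sign}=\sign(\sigma)e^G_{\sign}$ for $\sigma\in W_G$ one gets $(S\rtimes W_G)e^G_{\sign}=S\cdot e^G_{\sign}$, with residual left $S\rtimes W_P$-action $s\cdot(a e^G_{\sign})=sa\,e^G_{\sign}$ for $s\in S$ and $\tau\cdot(a e^G_{\sign})=\sign(\tau)\tau(a)e^G_{\sign}$ for $\tau\in W_P$; hence $a e^G_{\sign}\mapsto a\,e^P_{\sign}$ is an isomorphism of left $S\rtimes W_P$-modules $\Res^{W_G}_{W_P}P_{\sign}\cong\widetilde{P}_{\sign}$, and the same argument with $\triv$ in place of $\sign$ gives $\Res^{W_G}_{W_P}P_{\triv}\cong\widetilde{P}_{\triv}$. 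By construction these isomorphisms intertwine the $\Lambda$-action on the left side (restricted along $\Lambda\to\widetilde\Lambda$) with the $\widetilde\Lambda$-action on the right side; indeed the ring map $\Lambda\to\widetilde\Lambda$ is precisely the one induced by $\Res^{W_G}_{W_P}$ on endomorphism rings under this identification. With this in hand each square commutes by Frobenius reciprocity: $S\rtimes W_G$ is free of finite rank over $S\rtimes W_P$ on both sides, so $\Ind^{W_G}_{W_P}\cong\coind^{W_G}_{W_P}$, and for $N\in S\rtimes W_P\text{-mod}$ the $(\Res,\coind)$-adjunction gives
\begin{align*}
\calG'\bigl(\Ind^{W_G}_{W_P}N\bigr)
&=\Hom_{S\rtimes W_G}\bigl(P_{\sign},\coind^{W_G}_{W_P}N\bigr)
\cong\Hom_{S\rtimes W_P}\bigl(\Res^{W_G}_{W_P}P_{\sign},N\bigr)\\
&\cong\Hom_{S\rtimes W_P}\bigl(\widetilde{P}_{\sign},N\bigr)=\widetilde{\calG'}(N),
\end{align*}
naturally in $N$. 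Naturality of the adjunction in the first argument makes this composite linear over $\End_{S\rtimes W_G}(P_{\sign})=\Lambda$, so it is an isomorphism of functors $\calG'\circ\Ind^{W_G}_{W_P}\cong\Res\circ\widetilde{\calG'}$, which is commutativity of the right square; replacing $\sign$ by $\triv$ everywhere gives the left square.

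Finally, the diagram with every arrow replaced by its left adjoint --- $\calF,\calF',\widetilde{\calF},\widetilde{\calF'}$ in place of $\calG,\calG',\widetilde{\calG},\widetilde{\calG'}$, $\Res^{W_G}_{W_P}$ in place of $\Ind^{W_G}_{W_P}$, and extension of scalars $S^{W_P}\otimes_{S^{W_G}}(-)$ and $\widetilde\Lambda\otimes_{\Lambda}(-)$ in place of the two verticals $\Res$ --- commutes by passing to mates: a square of functors commuting up to natural isomorphism yields a commuting square of left (and of right) adjoints whenever these exist. I expect the main obstacle to be bookkeeping rather than substance: keeping the two functors both named $\Res$ apart, tracking left-versus-right module conventions over the corner algebras $\Lambda$ and $\widetilde\Lambda$, and checking that the Frobenius isomorphism is linear over these corner algebras and not merely over $S$. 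Should the adjunction-compatibility become awkward to phrase, one can argue entirely by hand instead: decompose $S\rtimes W_G=\bigoplus_{g\in W_G/W_P}g\,(S\rtimes W_P)$, compute $\Ind^{W_G}_{W_P}N$ explicitly, and extract $e^G_{\triv}\bigl(\Ind^{W_G}_{W_P}N\bigr)\cong e^P_{\triv}N$ and $e^G_{\sign}\bigl(\Ind^{W_G}_{W_P}N\bigr)\cong e^P_{\sign}N$ directly.
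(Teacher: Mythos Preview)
Your proposal is correct and follows essentially the same route as the paper: identify $\Res^{W_G}_{W_P}P_{\triv}\cong\widetilde P_{\triv}$ (and likewise for $\sign$), invoke $\Ind\cong\coind$ for the finite-rank extension $S\rtimes W_P\subset S\rtimes W_G$, and apply Frobenius reciprocity. The paper's proof is terser---it simply writes $\Hom_{S\rtimes W_P}(\widetilde P_{\triv},V)\cong\Hom_{S\rtimes W_P}(\Res P_{\triv},V)\cong\Hom_{S\rtimes W_G}(P_{\triv},\Ind V)$ and declares the right square analogous---whereas you supply the extra bookkeeping (the idempotent description, the corner-algebra linearity, and the mates argument for the adjoint diagram) that the paper leaves implicit.
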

\begin{proof}
We show $
\Res \circ \widetilde{\calG}=\calG \circ \Ind_{W_P}^{W_G}
$.  The induction functor $\Ind_{W_P}^{W_G}:=(S\rtimes W_G)\otimes_{(S\rtimes W_P)}-$ and the co-induction functor $\coind_{W_P}^{W_G}=\Hom_{S\rtimes W_P}(S\rtimes W_G, -)$ are equivalent. 
Then, for any module $V$ of $S\rtimes W_P$, we have
\begin{align*}
&\Hom_{S\rtimes W_P}(\widetilde{P}_{\triv}, V)
\cong \Hom_{S\rtimes W_P}(\Res P_{\triv}, V)
%\cong \Hom_{S\rtimes W_G}(P_{\triv}, \coind V)\\
\cong \Hom_{S\rtimes W_G}(P_{\triv}, \Ind_{W_P}^{W_G} V).
\end{align*}
The commutativity of the right square follows  from a similar argument. 
\end{proof}

The composition of functor $\calG'\circ \calF$ is denoted by $(-)^{\sE}$, called the {\it sign-twist functor}. The effect of $(-)^{\sE}$ is changing the symmetric polynomials in $S^{W_G}$ to skew-symmetric polynomials in $(S\otimes \sE)^{W_G}$.

Now we follow the setup as in \cite[Corollary~1.7]{YZ15}. Let $p: B\GL_r\times B\GL_{n-r} \cong \Grass(r, \calR(n))\to \Grass(n, \infty)\cong B\GL_n$ be the natural projection.
Let 
\[p_*: H^{\BM}(B\GL_r\times B\GL_{n-r})\cong \Q[\![\lambda_1,\dots,\lambda_n]\!]^{\fS_r\times\fS_{n-r}}\to H^{\BM}(B\GL_n)\cong \Q[\![\lambda_1,\dots,\lambda_n]\!]^{\fS_n}\] be the push-forward map. We write $W_G=\fS_n$ and $W_P=\fS_r\times\fS_{n-r}$.
Define \[S_{\loc}=R[\![t_1, t_2 ]\!][\![\lambda_1, \dots, \lambda_n]\!][\frac{1}{\prod_{1\leq i\neq j\leq n}(\lambda_i-\lambda_j)}].\]
\begin{prop}\label{prop:pushforward sign}
With notations as above, applying $\calG'\circ \calF$ to the pushforward $p_*$, we get
\begin{align*}
p_*^{\sE}: &(S\otimes_{\C} \sE)^{W_P} \to (S\otimes \sE)^{W_G}, \\
& f(\lambda_1\dots,\lambda_n)\mapsto 
\sum_{\{\sigma\in \hbox{Sh}(r, n-r)\}} \sign(\sigma)
\sigma\frac{ f(\lambda_1, \dots, \lambda_n)}{\prod_{1\leq j\leq r, r+1 \leq i\leq n}(\lambda_i-\lambda_j)},
\end{align*}
where $\lambda_1 \dots \lambda_n$ are Chern roots of $V$ in $H^{\BM}$.
\end{prop}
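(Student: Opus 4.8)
The plan is to produce $p_*^{\sE}$ by applying the sign-twist functor $(-)^{\sE}=\calG'\circ\calF$ directly to the coset-sum formula for the ordinary pushforward. First I would recall from \cite[Corollary~1.7]{YZ15} that, with $W_G=\fS_n$, $W_P=\fS_r\times\fS_{n-r}$ and $S^{W_P}$ regarded as an $S^{W_G}$-module via $S^{W_G}\hookrightarrow S^{W_P}$,
\[
p_*\colon S^{W_P}\longrightarrow S^{W_G},\qquad
f\longmapsto\sum_{\sigma\in\Sh(r,n-r)}\sigma\!\left(\frac{f}{\prod_{1\le j\le r,\ r+1\le i\le n}(\lambda_i-\lambda_j)}\right),
\]
where $\Sh(r,n-r)$ is the set of minimal-length coset representatives for $W_G/W_P$; this map is $S^{W_G}$-linear, so $(-)^{\sE}$ applies to it.

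Next I would identify the source and target of $(-)^{\sE}(p_*)$ with the spaces in the statement. On the target, $\calF(S^{W_G})=S$ with its natural action and $\calG'(S)=e_{\sign}S$ is the free rank-one $S^{W_G}$-module generated by $e(\fn_n)=\prod_{1\le i<j\le n}(\lambda_j-\lambda_i)$, i.e.\ $(S\otimes\sE)^{W_G}$. On the source, Lemma~\ref{lem:sign commutes} in its adjoint form gives $\calF(S^{W_P})\cong\Ind_{W_P}^{W_G}(S)$, and since $\coind_{W_P}^{W_G}=\Ind_{W_P}^{W_G}$, Frobenius reciprocity identifies $\calG'\calF(S^{W_P})=e_{\sign}\Ind_{W_P}^{W_G}(S)\cong\Hom_{S\rtimes W_P}(P_{\sign},S)=e_{\sign}^{W_P}S$, the $W_P$-skew polynomials $e(\fn_{v_1})e(\fn_{v_2})\cdot S^{W_P}=(S\otimes\sE)^{W_P}$. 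With these identifications fixed, the point is that $(-)^{\sE}$ replaces the transfer $\sum_{\sigma}\sigma$ over $W_G/W_P$ implicit in the formula for $p_*$ — which in the module-theoretic picture corresponds to applying the symmetrizer $e_{\triv}$ — by its signed version $\sum_{\sigma}\sign(\sigma)\sigma$, because $\calG'$ extracts the $e_{\sign}$-isotypic part and thereby weights the coset of $\sigma$ by $\sign(\sigma)$. This is exactly the mechanism already exhibited in the proof of Proposition~\ref{prop:map of shuffles}, where $\calG'\circ\calF$ inserts $\sign(\sigma)$ into a shuffle sum. Combining with the unchanged rational factor $\prod_{1\le j\le r,\ r+1\le i\le n}(\lambda_i-\lambda_j)^{-1}$ yields the asserted formula for $p_*^{\sE}$.

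The step I expect to be the main obstacle is the bookkeeping in the second paragraph: one must check that the factor produced by $\calG'\circ\calF$ is precisely $\sign(\sigma)$ with no residual Vandermonde ratio, and that the block normalization $e(\fn_{v_1})e(\fn_{v_2})$ on the source and the full normalization $e(\fn_n)$ on the target are the ones forced by the identifications above — the two being linked by $e(\fn_n)=e(\fn_{v_1})e(\fn_{v_2})\prod_{1\le j\le r,\ r+1\le i\le n}(\lambda_i-\lambda_j)$ together with $\sigma(e(\fn_n))=\sign(\sigma)e(\fn_n)$ for $\sigma\in W_G$. As a consistency check one verifies the right-hand side of the formula equals $e(\fn_n)$ times a $W_G$-invariant rational function which is in fact polynomial; alternatively, the proposition can be deduced from the purely combinatorial identity in the proof of Proposition~\ref{prop:map of shuffles}, and I would present whichever route is shorter.
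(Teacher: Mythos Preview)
Your sketch goes in the right direction, but the central step --- ``$(-)^{\sE}$ replaces the transfer $\sum_\sigma\sigma$ by $\sum_\sigma\sign(\sigma)\sigma$'' --- is asserted rather than proved, and you correctly flag this as the obstacle. The appeal to Proposition~\ref{prop:map of shuffles} does not help: that proof is a bare combinatorial check that multiplication by $e(\fn_v)$ intertwines the two shuffle formulas, and never touches the functors $\calF,\calG'$; it cannot tell you what $\calG'\circ\calF$ does to an $S^{W_G}$-linear map. You would still owe an explicit computation of $\calG'\calF(p_*)$, tracking elements through $P_{\triv}\otimes_{S^{W_G}}S^{W_P}$ and then through $\Hom(P_{\sign},-)$, and this is exactly the bookkeeping you defer.

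The paper sidesteps this by an adjunction trick that makes the functor application trivial. Using $(\Res,\coind)$ it transports $p_*\in\Hom_{S^{W_G}}(\Res S^{W_P},S^{W_G})$ to an $S^{W_P}$-linear endomorphism $\tilde p_*$ of $S^{W_P}$, which (after localizing) is simply $f\mapsto f/eu$ with $eu=\prod_{1\le j\le r<i\le n}(\lambda_i-\lambda_j)$. Because $eu$ is $W_P$-invariant, applying $\widetilde{\calG'}\circ\widetilde{\calF}$ to $\tilde p_*$ gives the \emph{same} formula $f\mapsto f/eu$ on $(S_{\loc}\otimes\sE)^{W_P}$ --- this is where the sign twist costs nothing. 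Finally one transports back along the $(\Res,\coind)$ adjunction on the $\Lambda$-side; Lemma~\ref{lem:sign commutes} guarantees this is compatible with the first transport, and the coinduction produces the signed coset sum $\sum_{\sigma\in\Sh(r,n-r)}\sign(\sigma)\,\sigma(f/eu)$. Thus the sign enters transparently from the adjunction on the skew side, rather than from an unproven claim about how $\calG'$ reweights a coset sum. If you want to salvage your direct route, you should imitate this: factor $p_*$ as division by $eu$ followed by the transfer, and show explicitly that $\calG'\calF$ applied to the transfer $S^{W_P}\to S^{W_G}$ is the signed transfer $(S\otimes\sE)^{W_P}\to(S\otimes\sE)^{W_G}$.
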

Note that this formula, a priori well-defined as a map $(S_{\loc}\otimes_{\C} \sE)^{W_P} \to (S_{\loc}\otimes \sE)^{W_G}$, sends $(S\otimes_{\C} \sE)^{W_P}$ to $(S\otimes_{\C} \sE)^{W_G}$.
\begin{proof}
For any $S^{\fS_n}$-module $M$, we have
\begin{align*}
\calG' \circ \calF(M)
=&\Hom_{S\rtimes \fS_n}(P_{\sign}, P_{\triv}\otimes_{S^{\fS_n}}M )\\
=&\Hom_{S\rtimes \fS_n}(P_{\triv}\otimes_{\C} \sE, P_{\triv}\otimes_{S^{\fS_n}}M)\\
=&M\otimes_{\C} \sE
\end{align*}
Therefore, $p_*^{\sE}$ is a map from $(S\otimes_\C \sE)^{W_P}\cong S^{W_P}\otimes_{\C} \sE$ to $(S\otimes_\C \sE)^{W_G}$.

Note that $p_*$ is an element in 
$\Hom_{S^{\fS_n}}(\Res S^{W_P}, S^{W_G})$. Using the adjoint pair $(\Res, \coind)$ in diagram \eqref{dia} between $S^{W_P}$-mod and $S^{W_G}$-mod, we get
\begin{align*}
&\Hom_{S^{W_G}}(\Res S^{W_P}, S^{W_G})
\cong \Hom_{S^{W_P}}( S^{W_P}, \coind_{S^{W_G}}^{S^{W_P}} S^{W_G})
\cong \Hom_{S^{W_P}}( S^{W_P}, S^{W_P})
\end{align*}
Let $\tilde{p}_*: S^{W_P}\to  S^{W_P}$ be the $S^{W_P}$-module homomorphism corresponding to $p_*$ under the isomorphisms above. 
Then,  $\tilde{p}_*: S_{\loc}^{W_P}\to  S_{\loc}^{W_P}$ is giving by 
\[
\tilde{p}_*: f(\lambda_1\dots,\lambda_n)\mapsto \frac{f}{eu}, \text{ where $eu =\prod_{1\leq j\leq r, r+1 \leq i\leq n}(\lambda_i-\lambda_j)$}.\]
We then apply the functor $\widetilde{\calG'} \circ \widetilde{\calF}$ to the map $\tilde{p}_*$. 
This gives a $\widetilde \Lambda$-module homomorphism:
\[
\widetilde{\calG'} \circ \widetilde{\calF}(\tilde{p}_*): (S_{\loc}\otimes_{\C}\sE)^{W_P} \to (S_{\loc}\otimes_{\C}\sE)^{W_P}, 
f \mapsto \frac{f}{eu}, 
\] since the term $eu=\prod_{1\leq j\leq r, r+1 \leq i\leq n}(\lambda_i-\lambda_j)$ is $W_P$-invariant. 

Finally, applying the adjoint pair $(\Res, \coind)$ in  \eqref{dia} between $\widetilde{\Lambda}$-mod and $\Lambda$-mod, we get
\begin{align*}
&\Hom_{\widetilde\Lambda}((S\otimes_{\C}\sE)^{W_P}, (S\otimes_{\C}\sE)^{W_P})
\cong \Hom_{\widetilde\Lambda}((S\otimes_{\C}\sE)^{W_P}, \coind(S\otimes_{\C}\sE)^{W_G})\\
\cong &\Hom_{\Lambda}
((S\otimes_{\C}\sE)^{W_P}, (S\otimes_{\C}\sE)^{W_G}).
\end{align*}
According to Lemma \ref{lem:sign commutes},  $p_{*}^{\sE}$ corresponds to $\widetilde p_*$ under the above isomorphism. 
Therefore,  $p_{*}^{\sE}$ is given by $
f \mapsto \sum_{\{\sigma\in \hbox{Sh}(r, n-r)\}} \sign(\sigma)
\sigma\frac{ f}{eu}$.
\end{proof}
It follows  that the skew shuffle algebra in (\ref{twist by sign rep}) is obtained from (\ref{equ:crit shuffle}) via the sign-twist functor.
\subsection{An algebra homomorphism from $\calP(\BM, Q)$ to $\calH^{\aux, \sE}(\widehat{Q}, W, C)$}
In this section, we compare $\calP(\BM, Q)$ with  $\calH^{\aux, \sE}(\widehat{Q}, W, C)$, as well as their actions  on the homology of the Nakajima quiver varieties.

We apply the sign-twist functor $(-)^{\sE}$ to the auxiliary CoHA $\calH^{\aux}(\widehat{Q}, W, C)$ in Proposition \ref{prop:critial COHA}. More precisely, consider $\calH^{\aux, \sE}(\widehat{Q}, W, C)$ with  multiplication
\[
m_{v_1, v_2}^{\aux,\sE}: \calH_{v_1}^\sE\otimes \calH_{v_2}^{\sE} \to \calH_{v_1+v_2}^\sE 
\] obtained by applying $(-)^{\sE}$ to $m_{v_1, v_2}^{\aux}$ for any $v_1, v_2\in \N^{I}$.
It follows immediately from definition that there is an algebra homomorphism $\calH^{\aux, \sE}(\widehat{Q}, W, C) \to\calS\calH^{\aux, \sE}$.
%The geometric interpretation of Proposition \ref{prop:map of shuffles} is the following: 
\begin{prop}\label{thm:prep and crit hom}
There is an algebra homomorphism 
$\Xi: \calP(\BM, Q) \to \calH^{\aux, \sE}(\widehat{Q}, W, C)$, given by 
\[
\Xi_v:  \calP_v\to  
\calH^{\sE}_v,
\,\ f\mapsto f\cdot e(\fn_v), 
\]
where $e(\fn_v)=\prod_{\alpha\in I, 1\leq i<j\leq v^\alpha}(\lambda_j^{\alpha}-_F\lambda_{i}^{\alpha})$. 
\end{prop}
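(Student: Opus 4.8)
The plan is to reduce the statement to the corresponding statement about shuffle algebras, which has already been established in Proposition~\ref{prop:map of shuffles}. Recall that by \cite[Theorem~C]{YZ15} there is an algebra homomorphism $\calP\to\calS\calH^{\prepr}$ which becomes an isomorphism after localization (more precisely, after inverting the $A_T(\pt)$-torsion and passing to a suitable localization of the polynomial ring). Likewise, Corollary~\ref{cor:aux_shuffle} together with the sign-twist discussion of \S~\ref{subsec:SignTwist} gives an algebra homomorphism $\calH^{\sE}\to\calS\calH^{\aux,\sE}$, which is again injective after the analogous localization. Thus the first step is to write down the candidate map $\Xi_v\colon\calP_v\to\calH_v^{\sE}$, $f\mapsto f\cdot e(\fn_v)$ at the level of the underlying $R[\![t_1,t_2]\!]$-modules: as noted in the remark after Theorem~\ref{thm:crit CoHA action}, $\fR(J_W^C,v)$ is a vector bundle over $\mu_v^{-1}(0)$, so $\calP_v\cong\calH_v$ canonically as modules, and $\calH_v^{\sE}$ is obtained from $\calH_v$ by applying the sign-twist functor $(-)^{\sE}$; concretely, multiplication by $e(\fn_v)$ identifies symmetric classes with skew-symmetric ones, exactly as in Proposition~\ref{prop:pushforward sign}. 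So $\Xi_v$ is a well-defined module isomorphism, and it remains to check it is multiplicative.

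The second and main step is to verify $\Xi_{v_1+v_2}\circ m^{P}_{v_1,v_2}=m^{\aux,\sE}_{v_1,v_2}\circ(\Xi_{v_1}\otimes\Xi_{v_2})$. The clean way to do this is to transport everything to the shuffle side: one has a commutative square relating $\calP\to\calS\calH^{\prepr}$, $\calH^{\sE}\to\calS\calH^{\aux,\sE}$ and the two copies of $\Xi$ (one geometric, one combinatorial from Proposition~\ref{prop:map of shuffles}), provided one knows that the horizontal shuffle maps intertwine the geometric $\Xi$ with the combinatorial one. That compatibility is essentially a formula check: the geometric $\Xi_v$ is multiplication by the equivariant Euler class $e(\fn_v)=\prod_{\alpha\in I,\,1\le i<j\le v^\alpha}(\lambda^\alpha_j-_F\lambda^\alpha_i)$, and under the shuffle presentation the class $e(\fn_v)$ of the tautological bundle of negative roots maps precisely to the polynomial $e(\fn_v)$ appearing in Proposition~\ref{prop:map of shuffles}. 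Once that square commutes, multiplicativity of the geometric $\Xi$ follows from multiplicativity of the combinatorial $\Xi$ (Proposition~\ref{prop:map of shuffles}) together with injectivity of the shuffle map $\calH^{\sE}\to\calS\calH^{\aux,\sE}$ after localization — and the identity, being an identity of module homomorphisms into a torsion-free localization, then holds before localization as well. Alternatively, and perhaps more in the spirit of the rest of the paper, one can give a direct geometric proof: by Proposition~\ref{prop:prep and crit} the multiplication on $\calH$ (before sign twist) differs from that on $\calP$ by insertion of the Euler class $e(\gamma)$ of the quotient bundle of $\gamma$ in \eqref{dia:gamma}; applying the sign-twist functor $(-)^{\sE}$ replaces, on each graded piece, the relevant pushforward $p_*$ along $B\GL_{v_1}\times B\GL_{v_2}\to B\GL_{v_1+v_2}$ by its twisted version $p_*^{\sE}$ of Proposition~\ref{prop:pushforward sign}, which inserts exactly the ratio $e(\fn_{v_1+v_2})/(e(\fn_{v_1})e(\fn_{v_2}))$ together with a sign; matching this against $e(\gamma)$ and the factor $e(\fn_{v_1+v_2})$ on the right-hand side of $\Xi_{v_1+v_2}(m^P(f_1\otimes f_2))$ gives the claim.

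The main obstacle I anticipate is bookkeeping of signs and normalizations: one must be careful that the sign character introduced by $(-)^{\sE}$, the sign $\sign(\sigma)$ appearing in the twisted shuffle formula \eqref{twist by sign rep}, and the combinatorial identity
\[
\sigma(f_1 e(\fn_{v_1})\cdot f_2 e(\fn_{v_2})\cdot\textstyle\prod_{i\in[1,v_1^\alpha],\,j\in[v_1^\alpha+1,v_1^\alpha+v_2^\alpha]}(\lambda^\alpha_j-\lambda^\alpha_i))=\sigma(f_1f_2\,\fac_1\,\fac_2\,e(\fn_{v_1+v_2}))\cdot(\text{sign})
\]
used in the proof of Proposition~\ref{prop:map of shuffles} all fit together consistently with the geometric conventions for Euler classes of $\iota$ and $\gamma$ recorded in \S~\ref{sec:concl}. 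All of this is routine once the framework above is in place, so I would keep the written proof short: state that $\Xi_v$ is the module isomorphism $\calP_v\cong\calH_v\to\calH_v^{\sE}$ given by multiplication by $e(\fn_v)$, invoke Proposition~\ref{prop:prep and crit} and Proposition~\ref{prop:pushforward sign} to reduce to the shuffle computation, and then cite Proposition~\ref{prop:map of shuffles} for multiplicativity, with injectivity of the localized shuffle map to descend the identity to the non-localized level.
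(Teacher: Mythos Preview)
Your alternative, direct approach is essentially what the paper does, and the paper keeps it very short: it writes out
\[
m^{\aux,\sE}\big(f_1e(\fn_{v_1})\otimes f_2e(\fn_{v_2})\big)
=\overline{\psi}_*^{\sE}\big(e(\gamma)\cdot\phi^{\sharp,\sE}(f_1e(\fn_{v_1})\otimes f_2e(\fn_{v_2}))\big)
=\overline{\psi}_*^{\sE}\big(f_1f_2\cdot e(\fn_{v_1+v_2})\big)
=\overline{\psi}_*(f_1f_2)\cdot e(\fn_{v_1+v_2}),
\]
invoking Proposition~\ref{prop:prep and crit} for the first equality, the identity $e(\fn_{v_1})e(\fn_{v_2})e(\gamma)=e(\fn_{v_1+v_2})$ (valid under Assumption~\ref{Assu:T_app}, where $e(\gamma)=\prod(\lambda_j^{\alpha''}-\lambda_i^{\alpha'})$) for the second, and Proposition~\ref{prop:pushforward sign} for the third. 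No shuffle map, no localization argument.

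Your primary route through shuffle algebras is a genuine detour, and it carries a gap you should be aware of: the descent step ``the identity\ldots then holds before localization as well'' needs the map $\calH^{\sE}\to\calS\calH^{\aux,\sE}$ (or equivalently $\calH^{\sE}$ into its localization) to be injective, i.e.\ $\calH^{\sE}$ to be torsion-free over $A_T(\pt)$. This is not established in the paper for $\calH$ (Corollary~\ref{cor:aux_shuffle} only asserts a homomorphism, not injectivity), and in the preprojective case one typically has to pass to the torsion-free quotient $\underline{\calP}$ to make such arguments work. So the shuffle reduction, while morally fine, would require extra justification that the direct computation avoids entirely.
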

\begin{proof}
It suffices to show the following
\[
m^{\prepr}(f_1\otimes f_2) \cdot e(\fn_{v_1+v_2})
=m^{\aux, \sE}\big(f_1e(\fn_{v_1}) \otimes f_2 e(\fn_{v_2}) \big). 
\]
By Proposition \ref{prop:prep and crit} and definition of the sign-twist, we have
\begin{align*}
m^{\aux, \sE}\big(f_1e(\fn_{v_1}) \otimes f_2 e(\fn_{v_2}) \big)
=&\overline{\psi}_{*}^{\sE} \big( e(\gamma) \cdot \phi_{\overline{\phi}}^{\sharp, \sE} ( f_1e(\fn_{v_1}) \otimes f_2 e(\fn_{v_2}))\big)\\
=&\overline{\psi}_{*}^{\sE} \big( 
f_1 \cdot f_2 \cdot e(\fn_{v_1+v_2})\big)\\
=&\overline{\psi}_{*}(f_1 \cdot f_2 )
 \cdot e(\fn_{v_1+v_2})
 =m^{\prepr}(f_1\otimes f_2) \cdot e(\fn_{v_1+v_2}).
\end{align*}
Here $\phi$ and $\overline \phi$ are as in \S~\ref{subsec:HallMulti}. \Omit{The third equality follows from Proposition \ref{prop:pushforward sign}, and the fact that $\sigma e(\fn_{v_1+v_2})=\sign(\sigma)e(\fn_{v_1+v_2})$.} This completes the proof.  
\end{proof}

Let $\calM(v, w)=H^{\BM}_{G_v\times G_w \times T}(\bold{\Lambda}_{v, w}^{ss})$ be the equivariant Chow theory of the Nakajima quiver varieties. For any $v\in \N^I$, we know $\calM(v, w)$ is a $H^{\BM}_{G_v}(\pt)$ module.

Applying the sign-twist functor $(-)^{\sE}$ to the action $a^{\aux}$ of auxiliary CoHA on $\calM(w)$, we get an action of the skew auxiliary CoHA $\calH^{\aux, \sE}(\widehat{Q}, W, C)$ on $\calM(w)$.
\begin{prop}\label{thm:the two actions}
Let $\Xi: \calP(\BM, Q) \to \calH^{\aux, \sE}(\widehat{Q}, W, C)$ be the map in Theorem~\ref{thm:prep and crit hom}. Then
\[
a^{\aux, \sE}\big((m\cdot e(\fn_{v_1}))\otimes\Xi(x)\big)=\big(a^{\prepr}(m\otimes x)\big)\cdot e(\fn_{v_1+v_2})
\]
for any $w,v_1,v_2\in\bbN^I$, $x\in \calP_{v_2}$, and $m\in \calM(v_1,w)$.
\end{prop}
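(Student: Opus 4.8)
The plan is to carry out the module analogue of the proof of Proposition~\ref{thm:prep and crit hom}, with the structural formula for the auxiliary multiplication $m^{\aux}$ (Proposition~\ref{prop:prep and crit}) replaced by the one for the auxiliary action $a^{\aux}$ (Proposition~\ref{prop:prep and crit for action}). Since $\Xi(x)=x\cdot e(\fn_{v_2})$, the assertion is equivalent to the identity
\[
a^{\aux,\sE}\big((m\cdot e(\fn_{v_1}))\otimes (x\cdot e(\fn_{v_2}))\big)=a^{\prepr}(m\otimes x)\cdot e(\fn_{v_1+v_2}),
\]
for all $w,v_1,v_2\in\bbN^I$, $m\in\calM(v_1,w)$ and $x\in\calP_{v_2}$, and it is this that I would establish.

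First I would unwind the left-hand side. By Proposition~\ref{prop:prep and crit for action}, $a^{\aux}(m\otimes x)=\overline{\psi}_{*}\big(e(\gamma)\cdot\phi^{\sharp}(m\otimes x)\big)$, where $\phi$ and $\overline{\psi}$ are the maps of the action correspondence of \S~\ref{subsec:PreproRepn} (so that $a^{\prepr}(m\otimes x)=\overline{\psi}_{*}\phi^{\sharp}(m\otimes x)$) and $e(\gamma)$ is the equivariant Euler class of the relevant quotient bundle over $Z_G^s$. The twisted action $a^{\aux,\sE}$ is obtained by applying the sign-twist functor $(-)^{\sE}$ term by term to the composition defining $a^{\aux}$ (the K\"unneth morphism, the $G\times_P$-isomorphism, the refined Gysin pullback $\phi^{\sharp}$, and the symmetrizing pushforward). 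By Lemma~\ref{lem:sign commutes} and Proposition~\ref{prop:pushforward sign}, the first three of these are morphisms of modules over $S\rtimes W_P$, with $W_P=\fS_{v_1}\times\fS_{v_2}$, and are not modified by $(-)^{\sE}$, while the symmetrizing pushforward is carried to its sign-twisted version described by Proposition~\ref{prop:pushforward sign}; moreover $\sE_{v_1+v_2}|_{W_P}\cong\sE_{v_1}\boxtimes\sE_{v_2}$, so $(m\cdot e(\fn_{v_1}))\otimes(x\cdot e(\fn_{v_2}))$ does land in the relevant $W_P$-skew space. Since $e(\gamma)$, $e(\fn_{v_1})$ and $e(\fn_{v_2})$ are all $W_P$-invariant, they may be moved freely through $\phi^{\sharp,\sE}$, giving
\[
a^{\aux,\sE}\big((m\,e(\fn_{v_1}))\otimes(x\,e(\fn_{v_2}))\big)=\overline{\psi}_{*}^{\sE}\Big(e(\gamma)\,e(\fn_{v_1})\,e(\fn_{v_2})\cdot\phi^{\sharp}(m\otimes x)\Big).
\]

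Next is the Euler-class bookkeeping, where Assumption~\ref{Assu:T_app} enters. As $T$ acts trivially on $\Rep(C,v)$ there, reading off $e(\gamma)$ from the definition of the embedding $\gamma$ (cf.\ \eqref{dia:gamma}, now with $t_1=t_2=0$) gives $e(\gamma)=\prod_{\alpha\in I,\ i\le v_1^{\alpha},\ j\le v_2^{\alpha}}(\lambda_j^{\alpha''}-\lambda_i^{\alpha'})$, which is exactly the crossing factor appearing in the twisted shuffle product \eqref{twist by sign rep}. On the other hand, splitting the index pairs of $e(\fn_{v_1+v_2})=\prod_{\alpha,\ 1\le i<j\le(v_1+v_2)^{\alpha}}(\lambda_j^{\alpha}-\lambda_i^{\alpha})$ into those inside $[1,v_1^{\alpha}]$, those inside $[v_1^{\alpha}+1,v_1^{\alpha}+v_2^{\alpha}]$, and the crossing ones yields $e(\fn_{v_1+v_2})=e(\fn_{v_1})\cdot e(\fn_{v_2})\cdot\prod_{\alpha,\ i\le v_1^{\alpha},\ j\le v_2^{\alpha}}(\lambda_j^{\alpha''}-\lambda_i^{\alpha'})$. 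Hence $e(\gamma)\,e(\fn_{v_1})\,e(\fn_{v_2})=e(\fn_{v_1+v_2})$, so the expression above becomes $\overline{\psi}_{*}^{\sE}\big(e(\fn_{v_1+v_2})\cdot\phi^{\sharp}(m\otimes x)\big)$. Finally, $\phi^{\sharp}(m\otimes x)$ is $W_P$-invariant and $e(\fn_{v_1+v_2})$ is $W_G$-skew, so $\sigma(e(\fn_{v_1+v_2}))=\sign(\sigma)e(\fn_{v_1+v_2})$ for every shuffle $\sigma$; combining this with the formula for $\overline{\psi}_{*}^{\sE}$ from Proposition~\ref{prop:pushforward sign} one gets $\overline{\psi}_{*}^{\sE}\big(g\cdot e(\fn_{v_1+v_2})\big)=\overline{\psi}_{*}(g)\cdot e(\fn_{v_1+v_2})$ for $W_P$-invariant $g$. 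Taking $g=\phi^{\sharp}(m\otimes x)$ and using $a^{\prepr}(m\otimes x)=\overline{\psi}_{*}\phi^{\sharp}(m\otimes x)$ completes the argument.

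The main obstacle is the second paragraph: making rigorous that $(-)^{\sE}$ applies term by term and acts as the identity on everything except the symmetrizing pushforward. This amounts to checking that each morphism in the composition defining $a^{\aux}$ is a morphism in the appropriate $S\rtimes W_P$- or $S\rtimes W_G$-module category — the refined Gysin pullback $\phi^{\sharp}$ requiring a moment's care, since one must recognize it as multiplication by a $W_P$-invariant class — and keeping track of the identification $\sE_{v_1+v_2}|_{W_P}\cong\sE_{v_1}\boxtimes\sE_{v_2}$ so that the inputs sit in the correct space. All of this has been assembled in \S~\ref{subsec:SignTwist} (Lemma~\ref{lem:sign commutes} and Proposition~\ref{prop:pushforward sign}); applied componentwise to the definition of $a^{\aux}$, it reduces the proposition to the routine Euler-class identity above. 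Alternatively, one can bypass the functorial bookkeeping altogether by passing to the shuffle presentation and running a computation parallel to the proof of Proposition~\ref{prop:map of shuffles}.
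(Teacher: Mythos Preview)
Your proposal is correct and follows essentially the same approach as the paper. The paper's own proof simply rewrites the assertion as the identity $a^{\aux,\sE}\big(m\cdot e(\fn_{v_1})\otimes x\cdot e(\fn_{v_2})\big)=a^{\prepr}(m\otimes x)\cdot e(\fn_{v_1+v_2})$ and then says ``the rest of the proof is similar as the proof of Theorem~\ref{thm:prep and crit hom}''; you have unpacked precisely that similarity, invoking Proposition~\ref{prop:prep and crit for action} in place of Proposition~\ref{prop:prep and crit}, carrying out the Euler-class identity $e(\gamma)\,e(\fn_{v_1})\,e(\fn_{v_2})=e(\fn_{v_1+v_2})$ under Assumption~\ref{Assu:T_app}, and pulling the skew factor through $\overline{\psi}_*^{\sE}$ via Proposition~\ref{prop:pushforward sign}.
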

\begin{proof}
The statement is equivalent to the equality:
\begin{align*}
a^{\aux, \sE}\big(
m \cdot e(\fn_{v_1})
\otimes 
x \cdot e(\fn_{v_2})\big)=a^{\prepr}(m\otimes x)e(\fn_{v_1+v_2}).
\end{align*}
The rest of the proof is similar as the proof of Theorem \ref{thm:prep and crit hom}.
\end{proof}
This theorem implies that, up to the factor $e(\fn_v)$ which depends only on the dimension vector $v\in\bbN^I$, the $\calP(\BM, Q)$ action on $\calM(w)$ comes from the action of $\calH^{\aux, \sE}(\widehat{Q}, W, C)$ via restriction of scalars.
}
\section{Borel-Moore homology and critical cohomology}\label{App}
In this section, we show the compatibility of push-forwards and pull-backs in the Borel-Moore homology and the critical cohomology. 

\subsection{From the critical cohomology to ordinary cohomology}\label{subsec:app_reduc}
We compare the critical cohomology with the ordinary cohomology in this section, following the appendix of \cite{D}.
Let $\pi: Y=X\times  \mathbb{A}^n \to X$ be the trivial vector bundle, carrying a scaling $\Gm$ action on the fiber $ \mathbb{A}^n$.
Let $f: Y=X\times  \mathbb{A}^n\to \mathbb{A}^1$ be a $\Gm$--equivariant function with respect to  the natural scaling $\Gm$ action on the target.
For simplicity we assume $f^{-1}(0)\supset\Crit(f)$.
Define $Z \subset X$ to be the reduced scheme consisting of points $z\in X$, such that $\pi^{-1}(z)\subset f^{-1}(0)$. 
To summarize the notations, we have the diagram:
\[
\xymatrix@R=1.5em{
Z \times  \mathbb{A}^n \ar@{^{(}->}[r]^{i\times \id}\ar[d]^{\pi_Z} &X\times  \mathbb{A}^n \ar[d]^{\pi} \ar[dr]^{p_Y}&\\
Z \ar@{^{(}->}[r]^{i} &X  \ar[r]^{p}& \pt.
}\]

Let $\varphi_{f}$ be the  vanishing cycle functor for $f$. Following the convention of \cite{D},
we consider $\varphi_{f}$ as a functor $D^b(Y)\to D^b(Y)$ between the derived categories of $Y=X\times \mathbb{A}^n$.
By an abuse of notation, we will abbreviate the vanishing cycle complex $\varphi_{f} \Q_Y[-1]$ to $\varphi_{f}$. The support of $\varphi_{f}$ is on the critical locus of $f$. For a $G$-variety $X$, let $H_{c, G}^*(X)$ be the equivariant cohomology with compact support. Let 
 $H_{c, G_v}^*(X)^\vee$ be its Verdier dual.

\begin{theorem}[\cite{D}, Theorem A.1 and Corollary A.9]\label{thm:DavA1}
There is a natural isomorphism of functors $D^{b}(X) \to D^{b}(X),$
$
\pi_{!} \varphi_f \pi^*[-1]\cong \pi_{!} \pi^* i_* i^*.
$ In particular, we have
$
H_{c, G}^*(Y, \varphi_f)\cong H^*_{c, G}(Z\times \mathbb{A}^n, \Q).$
\end{theorem}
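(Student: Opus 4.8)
This is Davison's dimensional reduction theorem \cite[Theorem~A.1]{D}, so one legitimate option is simply to invoke \emph{loc.\ cit.}; what follows is the route I would take to prove it from scratch. Since $f$ is $\Gm$-equivariant and linear along the fibres of $\pi$, it is the same datum as a section $\sigma\colon X\to\check Y:=X\times(\mathbb{A}^n)^{\vee}$ of the dual bundle, via $f(x,v)=\langle\sigma(x),v\rangle$, and $Z=\sigma^{-1}(0_X)$ is the (reduced) zero locus of $\sigma$. The plan is to pass through the monodromic Fourier--Sato transform $\mathsf T\colon D^b_{\mathrm{mon}}(Y)\xrightarrow{\ \sim\ }D^b_{\mathrm{mon}}(\check Y)$ for the fibrewise $\Gm$-scaling; every $\pi^*\mathcal F$ is monodromic, being (locally) constant along the scaling orbits, so $\mathsf T$ applies.

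The key step is the identity $\pi_!\,\varphi_f\,\mathcal G\cong\sigma^*\mathsf T(\mathcal G)$, valid up to a shift for all monodromic $\mathcal G$ on $Y$. This makes precise the principle that the vanishing cohomology of a fibrewise-linear function in the ``direction'' $\sigma$ is the stalk of the Fourier transform at $\sigma$; in the monodromic constructible setting it can be deduced from Kashiwara--Schapira's calculus of $\mathsf T$ (its interchange of $!$-pushforward, $*$-pullback and specialisation) together with the microlocal vanishing criterion: $\varphi_g\mathcal H$ is zero near any point $x$ with $(x,dg(x))\notin\mathrm{SS}(\mathcal H)$, where $\mathrm{SS}$ denotes the singular support.

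The remaining input is formal. First, $\mathsf T(\pi^*\mathcal F)\cong(0_X)_{!}\,\mathcal F$ up to shift, where $0_X\colon X\hookrightarrow\check Y$ is the zero section: the Fourier transform of a complex pulled back from the base is concentrated along the zero section of the dual bundle, with stalk $\mathcal F$ there. Second, the Cartesian square realising $Z=\sigma^{-1}(0_X)$ as the fibre product $X\times_{\check Y}X$ along $\sigma$ and $0_X$ gives, by proper base change, $\sigma^*(0_X)_{!}\mathcal F\cong i_{!}i^*\mathcal F=i_*i^*\mathcal F$ ($i$ a closed embedding). Combining the three steps, $\pi_!\varphi_f\pi^*\mathcal F\cong i_*i^*\mathcal F$ up to a shift; since $\pi_!\pi^*(-)\cong(-)[-2n]$ on $D^b(X)$, comparing normalisations yields $\pi_!\varphi_f\pi^*[-1]\cong\pi_!\pi^*i_*i^*$. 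All functors used are $G$-equivariant and descend to the quotient stacks, so the isomorphism holds $G$-equivariantly, and applying $R\Gamma_c$ over $X$ to the case $\mathcal F=\Q_X$ gives $H^*_{c,G}(Y,\varphi_f)\cong H^*_{c,G}(Z\times\mathbb{A}^n,\Q)$.

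The hard part is the first step: pinning down the Fourier-theoretic identity --- all shifts, and the precise form of the monodromy operator --- inside the equivariant constructible category actually in use, and checking the monodromic hypotheses throughout. One can in principle bypass Fourier theory by a dévissage: $\varphi_f\pi^*\mathcal F$ vanishes over $X\setminus Z$ because there $df$ has nowhere-zero fibre component while $\mathrm{SS}(\pi^*\mathcal F)$ has zero component in the fibre cotangent directions, so $\varphi_f\pi^*\mathcal F$ is supported on $\pi^{-1}(Z)$; but then one still has to identify $\pi_!$ of it there, which amounts to analysing the family of linear forms $\sigma(x)$ degenerating to $0$ as $x\to Z$ --- essentially the same difficulty repackaged, and in my view the genuine crux of the theorem.
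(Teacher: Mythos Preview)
The paper does not actually prove this theorem: it is stated as a citation of \cite[Theorem~A.1]{D}, and the text following the statement only explains why the cohomological consequence in the ``In particular'' clause follows from the functorial isomorphism (by unwinding $H_c^*(Y,\varphi_f)=p_!\pi_!\varphi_f[-1]\pi^*\Q_X$ and $H_c^*(Z\times\mathbb{A}^n,\Q)=p_!\pi_!\pi^*i_*i^*\Q_X$), deferring the functorial isomorphism itself to Davison. Your proposal goes further than the paper does, supplying an outline of an actual proof.

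Your Fourier--Sato route is a standard and correct strategy for this kind of dimensional reduction. The three ingredients you isolate --- the identification $\pi_!\varphi_f(-)\simeq\sigma^*\mathsf{T}(-)$ (up to shift) for monodromic objects, the computation $\mathsf{T}(\pi^*\mathcal F)\simeq(0_X)_!\mathcal F$ (up to shift), and the proper base change along the Cartesian square defining $Z$ --- combine exactly as you say, and your bookkeeping with $\pi_!\pi^*\cong[-2n]$ to fix the shift is fine. You are also right to flag the first step as the genuine crux: the precise identification of $\varphi_f$ with a pullback of the Fourier--Sato transform, with shifts and monodromy pinned down, is where the content lies. Your alternative d\'evissage remark (vanishing of $\varphi_f\pi^*\mathcal F$ over $X\setminus Z$ via singular support, then analysing what remains over $Z$) is essentially the approach Davison takes in \cite{D}, so you have correctly identified both the Fourier-theoretic and the more direct proofs. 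Either way, since the paper simply invokes \cite{D}, your proposal is strictly more detailed than what the paper provides.
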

Indeed, by definition, we have
\[
H_{c}^*(Y, \varphi_f)=p_{Y!} \varphi_f[-1](\Q_Y)
                                   =p_{!} \pi_{!} \varphi_f [-1](\pi^* (\Q_X)). 
\] On the other hand, we have the isomorphism
\begin{align*}
H_{c}^*(Z\times \mathbb{A}^n, \Q)
=&p_{!}\pi_{!} ( i\times \id)_{!}\Q_{Z\times \mathbb{A}^n}
%=p_{X!}\pi_{X!} ( i\times \id)_{!} \pi_{Z}^*\Q_{Z}\\
=p_{!}\pi_{!} ( i\times \id)_{*} \pi_{Z}^*\Q_{Z}
=p_{!}\pi_{!}  \pi_{}^* i_* i^*\Q_{X}.
\end{align*}
Thus, the isomorphism $H_{c, G}^*(Y, \varphi_f)\cong H^*_{c, G}(Z\times \mathbb{A}^n, \Q)$ follows from the isomorphism of the two functors in Theorem \ref{thm:DavA1}, which is shown in \cite[Theorem A.1]{D}. 
\begin{prop}\cite[Proposition A.8]{D}\label{Ts and Ku}
The following diagram of isomorphisms commutes.
\[\xymatrix@R=1.5em{
H^*_c(f_1^{-1}(0), \varphi_{f_1})\otimes H^*_c(f_2^{-1}(0), \varphi_{f_2}) \ar[r]^(0.55){\hbox{TS}}\ar[d]^{\cong} & 
H^*_c(f_1^{-1}(0)\times f_2^{-1}(0), \varphi_{f_1\boxplus f_2}) \ar[d]^{\cong}\\
H^*_c(Z_1\times \mathbb{A}^{n_1}, \Q)\otimes 
H^*_c(Z_2\times \mathbb{A}^{n_2}, \Q)\ar[r]^(0.55){\hbox{Ku}} &
H^*_c(Z_1\times Z_2\times \mathbb{A}^{n_1+n_2}, \Q),
}\]
where $\hbox{TS}$ is the Thom-Sebastiani isomorphism, $\hbox{Ku}$ is the K\"unneth isomorphism, and the vertical isomorphisms are as in Theorem A.1 of \cite{D}.
\end{prop}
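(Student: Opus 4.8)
The plan is to reduce the commutativity to the Thom--Sebastiani property of the vanishing-cycle functor itself, by unwinding the dimension-reduction isomorphism of Theorem~\ref{thm:DavA1} into standard six-functor operations and checking that each of them is multiplicative under external products. Recall from the discussion preceding the statement that the isomorphism $H^*_c(Y,\varphi_f)\cong H^*_c(Z\times\mathbb A^n,\mathbb Q)$ is obtained by applying $p_!$ to the natural isomorphism of functors $\beta_f\colon\pi_!\varphi_f\pi^*[-1]\xrightarrow{\sim}\pi_!\pi^*i_*i^*$ of Theorem~\ref{thm:DavA1}, evaluated at $\mathbb Q_X$. Since $f$ is $\Gm$-equivariant it is linear on the fibers of $\pi$, hence a submersion in the fiber directions over $X\setminus Z$ and identically zero on $\pi^{-1}(Z)=Z\times\mathbb A^n$; so I would first record that $\beta_f$ is the composite of (i) the identification $\varphi_f\pi^*\cF\cong k_*k^*\varphi_f\pi^*\cF$, where $k\colon Z\times\mathbb A^n\hookrightarrow Y$, which expresses that $\varphi_f\pi^*\cF$ is supported on $\pi^{-1}(Z)$; (ii) the local identification $k^*\varphi_f\pi^*\cF\cong\pi_Z^*i^*\cF$ (up to shift), valid because $f$ vanishes identically on $Z\times\mathbb A^n$; (iii) the base-change identity $\pi^*i_*=k_*\pi_Z^*$ together with $\pi_!k_!=i_!\pi_{Z!}$; and then applying $\pi_!$.

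Next I would check that, for $Y_1\times Y_2=(X_1\times X_2)\times\mathbb A^{n_1+n_2}$ and $f=f_1\boxplus f_2$, the subvariety $Z$ attached to $f_1\boxplus f_2$ equals $Z_1\times Z_2$ (a fiber of $\pi_1\times\pi_2$ lies in $(f_1\boxplus f_2)^{-1}(0)$ iff each factor lies in $f_i^{-1}(0)$, by fiberwise linearity), and that each of the ingredients (i)--(iii) for $f_1\boxplus f_2$, restricted to external products $\cF_1\boxtimes\cF_2$, factors as the external product of the corresponding ingredients for $f_1$ and $f_2$. For (iii) this is the K\"unneth formula $(\pi_1\times\pi_2)_!(\mathcal A\boxtimes\mathcal B)\cong\pi_{1!}\mathcal A\boxtimes\pi_{2!}\mathcal B$, which after applying $p_!$ is precisely the bottom map $\mathrm{Ku}$ of the diagram; for (i) and (ii) it is the Thom--Sebastiani isomorphism $\varphi_{f_1\boxplus f_2}\cong\varphi_{f_1}\boxtimes\varphi_{f_2}$, which identifies the support $\pi^{-1}(Z_1\times Z_2)$ of $\varphi_{f_1\boxplus f_2}$ with the product of the two supports and, over $Z_1\times Z_2\times\mathbb A^{n_1+n_2}$ where $f_1\boxplus f_2$ vanishes and is the "sum" of the two zero functions, identifies the trivialization with the external product of the two trivializations. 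Granting this, the outer square of the proposition is glued from copies of the elementary square asserting that K\"unneth commutes with (i)--(iii) applied factorwise, so it commutes; this final diagram chase is routine.

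The hard part will be the bookkeeping of shifts and Tate twists: the Thom--Sebastiani isomorphism naturally carries a shift (and, in the mixed Hodge module refinement, a Tate twist), while the dimension reduction carries its own shift, and one must check that these match so that no sign slips into the square. Here fixing Davison's convention $\varphi_f:=\phi_f[-1]$ throughout and using the version of Thom--Sebastiani compatible with it is what makes the normalizations cancel; one also uses that $\varphi_{f_i}$ is monodromic (since $f_i$ is $\Gm$-equivariant), that Thom--Sebastiani respects the monodromy operators, and that over $Z_i\times\mathbb A^{n_i}$ the monodromy is trivial because $f_i$ vanishes there. In the equivariant setting of the present paper no new point arises, since all the natural transformations above live at the level of schemes and commute with the smooth pullbacks defining the Borel construction. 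Finally I would note that this statement is exactly \cite[Proposition~A.8]{D}, so in the body of the paper it suffices to cite \emph{loc.\ cit.}
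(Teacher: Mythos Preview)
The paper does not prove this proposition at all: it is stated with the attribution \cite[Proposition~A.8]{D} and no argument is given, because the result is quoted from Davison. Your final remark is therefore exactly right --- in this paper it suffices to cite \emph{loc.\ cit.} --- and your sketch, while a reasonable outline of how such a proof would go, is strictly more than what the paper provides.
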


\subsection{Compatibility of push-forwards and pullbacks}
In this section, we show the isomorphism in Theorem \ref{thm:DavA1}
is compatible with  pullbacks and  proper pushforwards. 

Let $g: X\to X'$ be a morphism and $g\times h: Y=X\times \mathbb{A}^n\to Y'=X'\times \mathbb{A}^m$ be the morphism of the trivial bundles,
where $h: \mathbb{A}^n\to \mathbb{A}^m$ is a linear morphism. 
Let $f': X'\times \mathbb{A}^m\to \mathbb{A}^1$ be a function, and $f:=f'\circ (g\times h)$. 

\begin{lemma}\label{lem:fiber_diag}
Assume $h$ is a surjective linear map, then there is a map $g_Z:Z\to Z'$ induced by $g:X\to X'$ so that $Z$ is the fiber product of $X$ and $Z'$, i.e., the bottom square in the following diagram is Cartesian. In particular, in this case the top square in the following diagram is also Cartesian.
\end{lemma}
\begin{equation}\label{diagram: 3D}
\xymatrix@C=1em @R=1.5em {
%----line 1-----
Z \times  \mathbb{A}^n \ar@{^{(}->}[rr]^{i\times \id}\ar[dd]^{\pi_Z} 
\ar[rd]^{g_Z \times h}
&&X\times  \mathbb{A}^n \ar@{-->}[dd]^(0.3){\pi_X} \ar[rd]^{g \times h}&\\
%----line 2-----
&Z' \times  \mathbb{A}^m \ar@{^{(}->}[rr]^(0.3){i'\times \id}\ar[dd]_(0.3){\pi_{Z'}} &&X'\times  \mathbb{A}^{m} \ar[dd]^{\pi_{X'}} \\
%----line 3-----
Z \ar@{^{(}-->}[rr]^(0.3){i} \ar[rd]_{g_Z}& &
 X \ar@{-->}[rd]^{g}& \\
%----line 4-----
&Z' \ar@{^{(}->}[rr]_{i'} & & X'. 
}
\end{equation}
\begin{proof}
This follows from the definitions of $Z$ and $Z'$, and the assumption that $h$ is surjective.
\end{proof}
For the purpose of the present paper we only consider cases in which $h$ is surjective. 
\begin{lemma}\label{lem:pushforward}
With notations as above, assume $g$ is a proper morphism,  $m=n$ and $h$ is the identity map.
Then, the following diagram commutes.
\[
\xymatrix@R=1.5em{
H_{c}^*(X\times \mathbb{A}^n, \varphi_f)^\vee
\ar[r]^{(g\times h)_*}\ar[d]_{\cong} & 
H_{c}^*(X'\times \mathbb{A}^m, \varphi_{f'})^\vee
 \ar[d]_{\cong}\\ 
H_{c}^*(Z\times \mathbb{A}^n, \Q)^\vee
\ar[r]^{(g_Z\times h)_{*}} & 
H_{c}^*(Z'\times \mathbb{A}^m, \Q)^\vee.
}\]
In the diagram, the vertical isomorphisms are given in Theorem \ref{thm:DavA1}.
\end{lemma}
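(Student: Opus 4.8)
The plan is to reduce the statement to a compatibility of the natural transformation underlying Theorem~\ref{thm:DavA1} with proper pushforward, and then to verify that compatibility by a diagram chase at the level of the six-functor formalism, using the explicit formulas for both isomorphisms recalled after Theorem~\ref{thm:DavA1}.

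First I would set up notation: since $m=n$ and $h=\id$, write $q:=g\times\id: X\times\bbA^n\to X'\times\bbA^n$, and recall from the discussion after Theorem~\ref{thm:DavA1} that the left vertical isomorphism is $\bbD$ applied to the canonical identification $p_!\,\pi_!\,\varphi_f[-1]\,\pi^*\Q_X\cong p_!\,\pi_!\,\pi^*\,i_*i^*\Q_X$, and similarly for $X'$. Under Verdier duality, proper pushforward $q_*$ on compactly supported cohomology is dual to a pullback map, so the square in question is equivalent — after dualizing — to a square asserting that the natural transformation $\nu:\pi_!\varphi_f\pi^*[-1]\Rightarrow \pi_!\pi^* i_*i^*$ from \cite[Thm~A.1]{D} is compatible with the base-change/pushforward data along $g$. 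Concretely, since $g$ is proper, $g_!=g_*$, and one has a natural map $g_!g^!\Rightarrow\id$ (or the unit $\id\Rightarrow g_*g^*$); the claim becomes that $\nu$ for $(X',f')$ pulled back along $g$ agrees with $\nu$ for $(X,f)$, compatibly with these adjunction maps. The key input is that the vanishing cycle functor commutes with proper pushforward: for the Cartesian-type square in \eqref{diagram: 3D} (with $g\times\id$ proper over the base), $\varphi_{f'}\circ (g\times\id)_* \cong (g\times\id)_*\circ\varphi_f$, which is the standard compatibility of nearby/vanishing cycles with proper direct image (see e.g. \cite{KSa}).

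The heart of the argument, then, is to unwind Davison's construction of $\nu$ in \cite[Thm~A.1]{D} enough to see that it is \emph{natural in $X$} with respect to proper morphisms. The natural transformation there arises from a canonical map built out of the $\Gm$-action on the fiber $\bbA^n$ and the decomposition of $\varphi_f$ supported on $Z\times\bbA^n$; all the functors entering its definition ($\pi_!$, $\pi^*$, $i_*$, $i^*$, and $\varphi_f[-1]$) commute with $(g\times\id)_*$ when $g$ is proper, using proper base change in the square formed by $\pi_X,\pi_{X'}$ and $g, g\times\id$ (this square is Cartesian because $h=\id$), the compatibility of vanishing cycles with proper pushforward just cited, and the fact that $Z$ is the pullback of $Z'$ in the relevant sense — more precisely, $i\times\id$ factors $Z\times\bbA^n\to X\times\bbA^n$ and one checks $g_Z$ is proper and the outer square with $i,i'$ commutes. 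So I would: (1) record the proper-base-change isomorphism for the $\pi$-square; (2) record $\varphi_{f'}(g\times\id)_*\cong(g\times\id)_*\varphi_f$ with $f=f'\circ(g\times\id)$; (3) write $\nu_{X'}$, apply $(g\times\id)_*$, and use (1)–(2) to transport it to $\nu_X$, checking the triangle of canonical maps commutes; (4) apply $p'_!$ and dualize to recover the stated square of $\vee$'s, using that $(g\times\id)_*$ on compact-support cohomology is $\bbD$ of the corresponding map and that $p = p'\circ g$.

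The main obstacle I expect is step~(3): showing that Davison's natural isomorphism $\nu$ is genuinely natural with respect to proper maps $g$, rather than merely compatible up to a possibly-nontrivial sign or scalar. This requires looking inside the proof of \cite[Thm~A.1]{D} — the transformation is defined via a cone/localization-triangle argument involving the monodromy and the $\Gm$-equivariant structure — and verifying that each of its constituent morphisms is a base-change-compatible natural transformation. A subtlety is keeping track of the shift $[-1]$ and the Tate twists so that the duals match; I would handle this by phrasing everything with $\bbD$ from the start and invoking $\bbD\circ g_! \cong g_*\circ\bbD$ for $g$ proper, which absorbs the bookkeeping. Once naturality of $\nu$ under proper pushforward is in hand, the commutativity of the displayed square is formal.
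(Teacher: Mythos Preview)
Your approach would work but is considerably more laborious than the paper's. The paper's proof avoids your ``main obstacle'' (step~(3)) entirely by a reframing: rather than comparing the natural transformation $\nu_X$ for $X$ with $\nu_{X'}$ for $X'$, it uses only $\nu_{X'}$. The point is that Theorem~\ref{thm:DavA1} gives a natural isomorphism between the two functors
\[
F=p_{X'!}(\pi_{X'})_!\,\varphi_{f'}\,(\pi_{X'})^*[-1]
\quad\text{and}\quad
G=p_{X'!}(\pi_{X'})_!\,(\pi_{X'})^*\,i'_*i'^*
\]
from $D^b(X')$ to $D^b(\pt)$. Applying this natural transformation to the single morphism $\Q_{X'}\to g_*\Q_X$ (the adjunction unit) automatically yields a commuting square, since naturality in morphisms of the source category is built into the notion of a natural transformation. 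One then checks that this square, after unwinding the identifications, is exactly the dual of the square in the lemma. There is no need to open up Davison's construction.

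Your ingredients are not wasted, however: the ``unwinding'' step in the paper's argument --- identifying $F(g_*\Q_X)$ with $H^*_c(X\times\bbA^n,\varphi_f)$ and $G(g_*\Q_X)$ with $H^*_c(Z\times\bbA^n,\Q)$ --- implicitly uses exactly the proper base change for the $\pi$-square and the compatibility $\varphi_{f'}\circ(g\times\id)_*\cong(g\times\id)_*\circ\varphi_f$ that you listed as items (1) and (2). The paper's ``by definition'' absorbs these. So the difference is organizational: the paper packages everything as a single application of functoriality on $D^b(X')$, whereas you propose to transport $\nu$ along $g$ and verify compatibility inside Davison's proof, which is unnecessary.
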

\begin{proof}
By the commutativity of vanishing cycle functors with proper pushforwards,
the commutativity of the diagram in the lemma is equivalent to the commutativity of the following diagram:
\[
\xymatrix@R=1.5em{
%-----line 1-----
&p_{X'\times \mathbb{A}^{m}!} 
\varphi_{f'} [-1]\Big(\Q_{X'\times \mathbb{A}^{m} } \ar[r]
\ar[d]^{\cong}
& (g\times h)_* \Q_{X\times \mathbb{A}^{n} } \Big) \ar[d]^{\cong}\\
%-----line 2-----
&p_{X'\times \mathbb{A}^{m} !} (i'\times \id)_{*} (i'\times \id)^{*}
\Big( \Q_{X'\times \mathbb{A}^{m}} \ar[r] &(g\times h)_* \Q_{X\times \mathbb{A}^{n}}\Big).
}\]
\Omit{
Indeed, the meaning of $(g\times h)_*$ on the top (see page 17 of \cite{D}) is
$p_{X'\times \mathbb{A}^{m}!} 
\varphi_{f}\Big(\Q_{X'\times \mathbb{A}^{m} } \to (g\times h)_* \Q_{X\times \mathbb{A}^{n} } \Big)$, and the meaning of $(g_Z\times h)_*$ on the bottom is 
\begin{align*}
&p_{Z'\times \mathbb{A}^{m} !}\Big( \Q_{Z'\times \mathbb{A}^{m}} \to (g_Z\times h)_* \Q_{Z\times \mathbb{A}^{n}}\Big)\\
%=&p_{X'\times \mathbb{A}^{m} !} (i'\times \id)_{!} (i'\times \id)^{*}.
%\Big( \Q_{X'\times \mathbb{A}^{m}} \to (g\times h)_* \Q_{X\times \mathbb{A}^{n}}\Big)\\
=&p_{X'\times \mathbb{A}^{m} !} (i'\times \id)_{*} (i'\times \id)^{*}
\Big( \Q_{X'\times \mathbb{A}^{m}} \to (g\times h)_* \Q_{X\times \mathbb{A}^{n}}\Big).
\end{align*}
}

Applying the two functors 
\[F=p_{X'!}(\pi_{X'})_{!} \varphi_{f'} (\pi_{X'})^*[-1], \,\
G=p_{X'!}(\pi_{X'})_{!} (\pi_{X'})^*i'_* i'^*\]
to the morphism $\big( \Q_{X'} \to g_* \Q_{X}\big)$ gives the desired commutativity.
\end{proof}

Now we consider pullbacks. 
\begin{lemma}\label{lem:pullback}
With notations as in diagram \eqref{diagram: 3D}, 
then, the following diagram, in which the vertical isomorphisms are given in Theorem~ \ref{thm:DavA1}, commutes in the following two cases
\begin{enumerate}
\item $m=n$, and $h$ is the identity map;
\item $h$ is a surjective linear map, $X'=X$ and $g:X\to X'$ is the identity map.
\end{enumerate}
\[
\xymatrix@R=1.5em{
H_{c}^*(X'\times \mathbb{A}^m, \varphi_{f'})
^{\vee}
\ar[r]^(0.55){(g\times h)^*}\ar[d]_{\cong} & 
H_{c}^*(X\times \mathbb{A}^n, \varphi_f)^{\vee}
 \ar[d]_{\cong}\\ 
H_{c}^*(Z'\times \mathbb{A}^m, \Q)^{\vee}
\ar[r]^(0.55){(g\times h)^{*}} & 
H_{c}^*(Z\times \mathbb{A}^n, \Q)^{\vee}.
}
\]
\end{lemma}
\begin{proof}
In the case $m=n$, and $h$ is the identity map, the commutativity is equivalent to
the commutativity of the following diagram:
\[
\xymatrix@C=1em@R=1.5em {
p_{X'\times \mathbb{A}^m !}\varphi_{f'} [-1]\Big( (g\times h)_{!} \Q_{X\times \mathbb{A}^n} \ar[r] \ar[d]& \Q_{X'\times \mathbb{A}^m}[\dim(g\times h)] \Big)\ar[d]\\
p_{X'\times \mathbb{A}^m !}(i'\times \id)_{*} (i'\times \id)^{*}
\Big( (g\times h)_{!} 
\Q_{X\times \mathbb{A}^n} \ar[r] &\Q_{X'\times \mathbb{A}^m}[\dim(g\times h)] \Big).
}\]
Applying the two functors
\[
F=p_{X'\times \mathbb{A}^n !}  \varphi_{f'} \pi_{X'}^*[-1], \,\
G=p_{X'\times \mathbb{A}^n !} \pi_{X'}^* i'_* i'^*\]
to the morphism $\big(g_{!} \Q_{X} \to \Q_{X'}[\dim g] \big)$ gives  the desired commutativity. 

In the case when $g$ is the identity map, by Lemma~\ref{lem:fiber_diag}, the induced map $g_Z$ is an isomorphism. Both the top and the bottom of the diagram become the natural isomorphism induced by an affine bundle.
\end{proof}

\newcommand{\arxiv}[1]
{\texttt{\href{http://arxiv.org/abs/#1}{arXiv:#1}}}
\newcommand{\doi}[1]
{\texttt{\href{http://dx.doi.org/#1}{doi:#1}}}
\renewcommand{\MR}[1]
{\href{http://www.ams.org/mathscinet-getitem?mr=#1}{MR#1}}

\end{document}